\documentclass[USenglish]{amsart}
\usepackage[pagewise]{lineno}


\usepackage{amssymb,amsmath,amsthm,amscd,epsf,latexsym,verbatim,graphicx,amsfonts,hyperref,epstopdf,xcolor,wasysym}
\usepackage{cancel}
\input epsf.tex
\usepackage{enumerate}
\usepackage[utf8]{inputenc}
\usepackage[T1]{fontenc}
\usepackage{graphicx}
\usepackage{palatino, url, multicol}
\usepackage{subcaption}
\usepackage[margin=1.5in]{geometry}                
\DeclareGraphicsRule{.tif}{png}{.png}{`convert #1 `dirname #1`/`basename #1 .tif`.png}
\usepackage{tikz}
\usetikzlibrary{patterns}
\usepackage{mathtools}
\usepackage[ruled, vlined]{algorithm2e}

\usepackage{caption} 
\DeclareCaptionLabelFormat{cont}{#1~#2\alph{ContinuedFloat}}
\captionsetup[ContinuedFloat]{labelformat=cont}

\usepackage{marginnote}


\theoremstyle{plain}
\newtheorem{theorem}{Theorem}[section]
\newtheorem{conjecture}[theorem]{Conjecture}
\newtheorem{lemma}[theorem]{Lemma}
\newtheorem{proposition}[theorem]{Proposition}
\newtheorem{corollary}[theorem]{Corollary}

\newtheorem{mainthm}{Theorem}

\theoremstyle{definition}

\newtheorem{remark}[theorem]{Remark}

\newtheorem{definition}[theorem]{Definition}


\DeclareMathOperator{\It}{It}



\setcounter{totalnumber}{50}
\setcounter{topnumber}{50}
\setcounter{bottomnumber}{50}

\title{A characterization of Thurston's Master Teapot}
\author{Kathryn Lindsey and Chenxi Wu}

\newif\ifdraft\drafttrue 

\def\pr{{\textrm{Prefix}}}
\def\suf{{\textrm{Suffix}}}
\def\r{{\textrm{Reverse}}}
\def\It{{\textrm{It}}}
\def\U{{\Upsilon_2^{cp}}}
\def\X{{\Xi}}
\def\O{{\Omega_2^{cp}}}
\def\0{{\mathbf 0}}

\begin{document}
\maketitle

\begin{abstract} 
  We prove an explicit characterization of the points in Thurston's Master Teapot.   This description can be implemented algorithmically to test whether a point in $\mathbb{C}\times\mathbb{R}$ belongs to the complement of the Master Teapot. As an application, we show that the intersection of the Master Teapot with the unit cylinder is not symmetrical under reflection through the plane that is the product of the imaginary axis of $\mathbb{C}$ and $\mathbb{R}$.
\end{abstract}

\section{Introduction}

The \emph{Master Teapot}, $\U$, for the family $\mathcal{F}^{cp}_2$ of continuous, unimodal, critically periodic interval self-maps is the set 
\[ \U := \overline{ \{ (z,\lambda) \in \mathbb{C} \times \mathbb{R} \mid \lambda = e^{h_{top}(f)} \textrm{ for some } f \in \mathcal{F}^{cp}_2,z \textrm{ is a Galois conjugate of } \lambda \}}, \] and the Thurston set, $\O$, is its projection to the complex plane, i.e.
\[\O := \overline{ \{z \in \mathbb{C} \mid z \textrm{ is a Galois conjugate of } e^{h_{top}(f)} \textrm{ for some } f \in \mathcal{F}^{cp}\} }.\]
A finite approximation of $\U$ is shown in Figure \ref{f:teapot}.  The Master Teapot and Thurston set have rich geometrical and topological structures that have been investigated in several recent works, including  \cite{TiozzoGaloisConjugates, TiozzoTopologicalEntropy, CalegariKochWalker, thurston, thompson, BrayDavisLindseyWu}.  The main result of this paper is an explicit characterization of $\U$ -- a necessary and sufficient condition for a point to be in $\U$.  This characterization can be algorithmically tested and establishes a new connection between horizontal slices of the Master Teapot and iterated function system theory.  Before stating the results precisely, we introduce some terminology and notation.

\medskip

First, we define words and sequences in the alphabet $\{0, 1\}$:

\begin{definition}\label{def:word_seq} \

  \begin{enumerate}
\item A {\em sequence} $w=w_1w_2\ldots $ is an element in $\{0, 1\}^{\mathbb{N}}$. The {\em shift map} $\sigma : \{0,1\}^{\mathbb{N}} \to \{0,1\}^{\mathbb{N}}$ is defined by removing the first element of a sequence, i.e. $\sigma(w_1w_2w_3\dots):=w_2w_3\dots$.
\item A {\em word} $w=w_1w_2\dots w_n$ is an element in $\{0, 1\}^n$ for some positive integer $n$.   The number $n$ is called the {\em length} of the word $w$ and is denoted by $|w|$.
\item For $n \in \mathbb{N}$, the {\em reverse function} $\r : \{0,1\}^n \to \{0,1\}^n$ is defined as
  \[ \r(w_1w_2\dots w_n):=w_nw_{n-1}\dots w_1\]
\item For $k \in \mathbb{N}$, the \emph{$k$-prefix} of a sequence $w=w_1w_2\dots $ is the word
  \[\pr_k(w):=w_1\dots w_k\]
\item For a word $w=w_1 \dots w_n$ of length $n$ and a natural number $k \leq n$, the \emph{$k$-prefix} and \emph{$k$-suffix} of $w$ are the words 
\begin{align*}
  \pr_k(w) & :=w_1\dots w_k\\
  \suf_k(w) &:=w_{n-k+1}w_{n-k+2}\dots w_n \\
  \end{align*}
\end{enumerate}
\end{definition}

Next, we relate words and sequences with dynamics on $\mathbb{C}$ via the following definitions:
\begin{definition}\label{def:ifs} \
\begin{enumerate}
\item For any $z\in\mathbb{C}$, define maps $f_{0,z},f_{1,z}: \mathbb{C} \to \mathbb{C}$ by  
\[f_{0, z}(x):=zx, \quad f_{1, z}(x):=2-zx.\]
\item For any $w=w_1\dots w_n$ and $z \in \mathbb{C}$, set
  \[F(w,z):=f_{w_n,z}\circ \dots \circ f_{w_1,z}(1)\]
\item For any sequence $w=w_1w_2\dots$ and any $z \in \mathbb{C}$ with $|z|>1$, set
\begin{align*}
  H(w, z)  := & \lim_{n\rightarrow\infty}(-1)^{(\sum_{i=1}^nw_i)}z^{-n}  F(\pr_n(w), z) \\
   = &\lim_{n\rightarrow\infty}(-1)^{(\sum_{i=1}^nw_i)}z^{-n}f_{w_n,z}\circ\ldots \circ f_{w_1, z}(1)
  \end{align*}
\item For any sequence $w=w_1w_2\dots$ and $z \in \mathbb{C}$ with $|z|<1$, set
\begin{align*}
  G(w, z):= & \lim_{n\rightarrow\infty}F(\r(\pr_n(w)), z) \\
  = &\lim_{n\rightarrow\infty}f_{w_1,z}\circ \ldots \circ f_{w_n, z}(1) \\
  \end{align*}
\end{enumerate}
\end{definition}

The following definition contains definitions from \cite{MilnorThurston}: 
\begin{definition}\label{def:order} \
  \begin{enumerate}
\item The {\em cumulative sign} of a word $w=w_1w_2\dots w_n$ is defined as $s(w):=(-1)^{\sum_iw_i}$. 
\item The \emph{twisted lexicographic order} $\le_E$ is a total ordering on the set of sequences, defined as follows: $w<_Ew'$, if and only if there is some $k\in\mathbb{N}$, such that $\pr_{k-1}(w)=\pr_{k-1}(w')$, and $s(\pr_{k-1}(w))(w'_{k}-w_{k})>0$. In other words, $w <_E w'$ if and only if, denoting by $k$ the index of the first letter where $w$ and $w'$ differ, either $w'_k>w_k$ and the common $(k-1)$-prefix has positive cumulative sign, or $w'_k<w_k$ and the common $(k-1)$-prefix has negative cumulative sign.
\item We define the total order $\le_E$ on the set of words of length $n$ exactly the same way as above.
\end{enumerate}
\end{definition}

\begin{definition} \label{def:lambdaitinerary} \ 
\begin{enumerate}
\item Let $\lambda\in (1, 2]$. We call the map $f_{\lambda}:[0,1] \to [0,1]$ given by by 
\[f_\lambda(x)=\begin{cases} \lambda x & x\leq 1/\lambda \\ 2-\lambda x & x>1/\lambda\end{cases}\]
the \emph{$\lambda$-tent map}.  
Let $I_{0, \lambda}=[0, 1/\lambda]$, $I_{1, \lambda}=[1/\lambda, 1]$.
\item  The \emph{ $\lambda$-itinerary}, denoted as $\It_\lambda$, is the minimum (with respect to $\leq_E$) sequence $w$ such that for any $k\geq 0$, $f_\lambda^k(1)\in I_{w_{k+1},\lambda}$.
\end{enumerate}
\end{definition}

 One can easily check that $\It_{\lambda}$ is the itinerary of $1$ under $f_\lambda$ in the convention of Milnor-Thurston kneading theory.

Now we introduce a combinatorial condition on sequences:

\begin{definition} \label{def:improvedlambdasuitability}
  For $\lambda \in (1,2]$, a sequence $w$ is called \emph{$\lambda$-suitable} if for every  $\lambda'\in (\lambda, 2]$, the following conditions hold:
    \begin{enumerate}
    \item    $\r ( \pr_n(w)) \leq_E \pr_n(\It_{\lambda'})$ for all $n \in \mathbb{N}$.
    \item   If $\r(\pr_n(w))  = \pr_n(\It_{\lambda'})$, then the cumulative sign $s(\pr_n(w)) = -1$.  
    \item  If $\It_{\lambda'} = 1 \cdot 0^k \cdot 1 \dots$, $k \in \mathbb{N}$, then $w$ does not contain $k+1$ consecutive $0$s. \newline (That is, if $\It_{\lambda'}$ starts with $1$ followed by $k$ $0$s and then $1$, writing $w$ as $w=w_1w_2\ldots,$ there does not exist $n \in \mathbb{N}$ such that $w_i = 0$ for all $n \leq i \leq n+k$.) 
     \item If $k \in \mathbb{N}$ satisfies $\sqrt{2}\leq \lambda^{2^k}<2$, then $w=\mathfrak{D'}^k(w')$ for some sequence $w'$, where $\mathfrak{D'}$ is the map that replaces $0$ with $11$ and $1$ with $01$, such that for every $\lambda'>\lambda^{2^k}$, if $\It_{\lambda'} = 1 \cdot 0^k \cdot 1 \dots$ then $w'$ does not contain $k+1$ consecutive $0$s.
    \end{enumerate} 
\end{definition}
\begin{remark} Every sequence is (vacuously) $2$-suitable. \end{remark}

\noindent For $\lambda\in (1, 2)$, let $\X_\lambda$ be height-$\lambda$ slice of the Master Teapot $\Upsilon_2$:
\[\X_\lambda:=\{z: (z, \lambda)\in \Upsilon_2\}\]

\noindent We will use the following notation:
\begin{align*}
\mathbb{D}  & := \{z \in \mathbb{C} : |z| <1\},  \textup{ the open unit disk} \\
\overline{\mathbb{D}} & := \{z \in \mathbb{C} : |z| \leq 1\}, \textup{ the closed unit disk}\\
S^1 & := \{z \in \mathbb{C} : |z| = 1\}, \textup{ the unit circle} \\
\mathcal{C} & :=  \overline{\mathbb{D}} \times [1,2], \textup{ the closed ``unit cylinder''} \\
\end{align*}

Our main theorem is:

\begin{mainthm} \label{t:improvedinsidecharacterization}
 For any $\lambda \in (1,2]$, the part of the slice $\X_\lambda$ inside the closed unit disk can be characterized as:
\[\X_\lambda \cap \overline{\mathbb{D}} = S^1\cup \left \{z\in\mathbb{D}: G(w, z)=1\text{ for some }\lambda\text{-suitable sequence }w \right\}.\]
\end{mainthm}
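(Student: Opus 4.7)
The plan is to prove the two set inclusions making up the equality, treating the unit circle part separately from the open disk part. The inclusion $S^1 \subseteq \X_\lambda$ is already essentially known: previous work (e.g., \cite{BrayDavisLindseyWu}) shows that for every $\lambda \in (1, 2]$, the Galois conjugates of critically periodic tent-map growth rates accumulate densely on the unit circle, so $S^1$ lies inside every horizontal slice of $\Upsilon_2$. The remaining work concerns points in $\mathbb{D}$.

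For the forward inclusion $\X_\lambda \cap \mathbb{D} \subseteq \{z : G(w, z) = 1 \text{ for some $\lambda$-suitable } w\}$, I would take $z \in \X_\lambda \cap \mathbb{D}$ together with a defining sequence $(z_n, \lambda_n) \to (z, \lambda)$, where each $\lambda_n$ is critically periodic with purely periodic itinerary $\It_{\lambda_n} = v_n^\infty$ of period $p_n$, and $z_n$ is a Galois conjugate of $\lambda_n$. The key algebraic observation is that $z_n$ is a root of $F(v_n, z) - 1$ (the polynomial whose real roots greater than $1$ include $\lambda_n$), so $F(v_n, z_n) = 1$; and since $\r(\pr_{kp_n}(\r(v_n)^\infty)) = v_n^k$, one has $G(\r(v_n)^\infty, z_n) = \lim_k F(v_n^k, z_n) = \lim_k B_n^k(1)$, where $B_n(x) := f_{v_{n, p_n}, z_n} \circ \cdots \circ f_{v_{n, 1}, z_n}(x)$ is affine and satisfies $B_n(1) = F(v_n, z_n) = 1$; hence $B_n^k(1) = 1$ for every $k$ and $G(\r(v_n)^\infty, z_n) = 1$. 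After passing to a subsequence so that $\r(v_n)^\infty$ converges coordinatewise to some $w \in \{0,1\}^\mathbb{N}$, uniform continuity of the IFS composition on a closed disk strictly inside $\mathbb{D}$ yields $G(w, z) = 1$. It then remains to verify the four $\lambda$-suitability conditions on $w$: (1)--(2) follow from monotonicity of $\It_{\lambda'}$ in $\lambda'$ applied to the identity $\pr_{kp_n}(\It_{\lambda_n}) = v_n^k$ together with $\lambda_n \to \lambda$ from above; (3) reflects that $\It_{\lambda_n}$ cannot contain arbitrarily long blocks of $0$s once $\lambda_n > \lambda$; and (4) encodes the $2^k$-fold renormalization structure forced on $w$ when $\lambda^{2^k}$ is close to $1$.

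For the reverse inclusion, given a $\lambda$-suitable $w$ with $G(w, z) = 1$ and $z \in \mathbb{D}$, I would use the Milnor--Thurston kneading admissibility criterion together with conditions (1)--(4) to construct a sequence of periodic words $v_n$ such that $v_n^\infty = \It_{\lambda_n}$ for critically periodic growth rates $\lambda_n \downarrow \lambda$, with $\r(v_n)^\infty \to w$. For each $n$ the polynomial $F(v_n, z) - 1$ has $\lambda_n$ and all its Galois conjugates as roots; the reverse of the algebraic observation above (using $|z| < 1$ and contraction of the affine map $B_n$) identifies its roots inside $\mathbb{D}$ with the solutions of $G(\r(v_n)^\infty, \cdot) = 1$. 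Continuity of roots under coefficient perturbation then yields a Galois conjugate $z_n$ of $\lambda_n$ with $z_n \to z$, and closure of $\Upsilon_2$ gives $(z, \lambda) \in \Upsilon_2$. Conditions (1)--(2) ensure $\lambda_n$ can be chosen to decrease to $\lambda$, (3) rules out disallowed kneading data, and (4) handles the renormalizable regime.

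The main obstacle will be condition (4), which translates the renormalization substitution $\mathfrak{D}'$ (sending $0 \mapsto 11$ and $1 \mapsto 01$) into a combinatorial constraint on $w$. Exploiting it requires showing that $\mathfrak{D}'$ intertwines the IFS defining $G(\cdot, z)$ with an analogous one at $z^2$ (up to sign and an affine change of coordinates), and that the twisted lexicographic order and cumulative sign transform consistently under this substitution. These combinatorial transformations must match the dynamical renormalization of tent maps on the parameter interval $[2^{1/2^{k+1}}, 2^{1/2^k}]$, and I expect the inductive reduction from the $k$-fold renormalizable case to the non-renormalizable case $\lambda^{2^k} \ge \sqrt{2}$ to be the most delicate component of the argument.
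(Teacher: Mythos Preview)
Your overall architecture matches the paper's, and the forward inclusion is essentially correct: if $z_n$ is a Galois conjugate of a critically periodic $\lambda_n$ with $\It_{\lambda_n}=v_n^\infty$, then indeed $G(\r(v_n)^\infty,z_n)=1$, and one can extract a limit sequence $w$ with $G(w,z)=1$. You do gloss over why the limit $w$ is $\lambda$-suitable; condition~(2) in particular is not automatic from monotonicity alone and in the paper is the content of Lemma~\ref{lem:reverseislambdasuitable}, whose proof requires a small but genuine combinatorial argument using admissibility of $\It_{\lambda'}$.

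The reverse inclusion, however, has two real gaps. First, your plan ``construct periodic words $v_n$ with $v_n^\infty=\It_{\lambda_n}$, $\lambda_n\downarrow\lambda$, and $\r(v_n)^\infty\to w$'' hides the entire combinatorial core of the argument. Given only that $w$ is $\lambda$-suitable, there is no obvious way to manufacture an \emph{admissible} periodic word whose reverse begins with a long prefix of $w$; the paper accomplishes this via Lemma~\ref{lem:main}, which shows that concatenating a \emph{dominant} word $w_0$ (produced by Proposition~\ref{prop:dense}) in front of $\r(\pr_N(w))$ yields an admissible word, and the proof uses conditions~(1)--(2) of $\lambda$-suitability in an essential way. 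The ``Milnor--Thurston admissibility criterion'' on its own does not give you this. Second, and more seriously, even once you have such an admissible word $v_n$, the Parry polynomial $P_{v_n}$ may be \emph{reducible}: its roots in $\mathbb{D}$ are the solutions of $G(\r(v_n)^\infty,\cdot)=1$, but there is no reason the root near $z$ is a Galois conjugate of the leading root $\lambda_n$. Your sentence ``continuity of roots\dots yields a Galois conjugate $z_n$'' is exactly where the argument breaks. The paper resolves this by first proving Theorem~\ref{t:InsideParryConjugatesDontMatter}, which shows (via a separate irreducibility construction, Proposition~\ref{lem:concat}) that all roots in $\mathbb{D}$ of Parry polynomials of admissible words with $w^\infty\le_E\It_{\lambda'}$ already lie in $\X_\lambda$; this is what lets one pass from ``root of $P_{v_n}$ near $z$'' to ``$z\in\X_\lambda$'' without ever needing the root to be an honest Galois conjugate. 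Your outline needs an analogue of this step.
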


There is a similar characterization for outside the unit disc, which follows directly from results in in \cite{TiozzoGaloisConjugates}:

\begin{mainthm} \label{t:improvedoutsidecharacterization}
 For any $\lambda \in [1,2)$, the part of the slice $\X_\lambda$ outside the unit disk is:
  \[\X_\lambda \setminus \overline{\mathbb{D}}=\left \{z \in \mathbb{C} \setminus \overline{\mathbb{D}}: H(\It_\lambda, z)=0 \right \}.\]
\end{mainthm}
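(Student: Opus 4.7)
The plan is to derive this from the main results of \cite{TiozzoGaloisConjugates}, via a two-step argument: first establish the formula when $\lambda$ itself is critically periodic, then extend by density using Hurwitz's theorem.

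First I would handle the critically periodic case. Suppose $\lambda$ has critical period $p$, so that $w := \It_\lambda$ is purely periodic with period dividing $p$. The composition $T_z := f_{w_p,z}\circ\cdots\circ f_{w_1,z}$ is an affine map $T_z(x) = a(z)x + b(z)$ with slope $a(z)=(-1)^{\sum_{i=1}^p w_i}z^p$ and intercept satisfying $a(z)+b(z)=F(\pr_p(w),z)$, and critical periodicity gives $T_\lambda(1)=f_\lambda^p(1)=1$. Iterating $T_z$ and cancelling the normalization factors $(-1)^{\sum_i w_i} z^{-n}$ along $n = kp + r$ yields, for $|z|>1$,
\[
H(\It_\lambda,z) \;=\; 1 \;-\; \frac{b(z)}{1-a(z)},
\]
so the zeros of $H(\It_\lambda,\cdot)$ in $\{|z|>1\}$ coincide with the roots of the polynomial $F(\pr_p(\It_\lambda),z)-1$ there. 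By the Parry/Tiozzo factorization, this polynomial decomposes as the minimal polynomial of $\lambda$ times a factor whose remaining roots lie on $S^1$, so its zeros outside $\overline{\mathbb{D}}$ are exactly the Galois conjugates of $\lambda$ lying outside $\overline{\mathbb{D}}$, which is by definition $\X_\lambda \setminus \overline{\mathbb{D}}$.

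Next I would extend to a general $\lambda \in [1,2)$ by density. Critically periodic parameters are dense in $(1,2]$, so choose $\mu_k \to \lambda$ of this type. The key analytic input of \cite{TiozzoGaloisConjugates} is that the normalized polynomials $(-1)^{\sum_{i=1}^n w_i}z^{-n}F(\pr_n(w),z)$ converge to $H(w,z)$ uniformly on compact subsets of $\{|z|>1\}$, and that this limit depends continuously on $w$ in the product topology. Hence $H(\It_{\mu_k},\cdot) \to H(\It_\lambda,\cdot)$ uniformly on compacta of $\{|z|>1\}$. Hurwitz's theorem then gives Hausdorff convergence of zero sets inside this open region, delivering both inclusions at once: by step one, $\X_{\mu_k}\setminus\overline{\mathbb{D}}$ is the zero set of $H(\It_{\mu_k},\cdot)$ outside $\overline{\mathbb{D}}$, so accumulation points of $\bigsqcup_k\bigl(\X_{\mu_k}\setminus\overline{\mathbb{D}}\bigr)\times\{\mu_k\}$ are precisely the points $(z,\lambda)$ with $|z|>1$ and $H(\It_\lambda,z)=0$.

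The main obstacle is the one-sided discontinuity of $\lambda\mapsto\It_\lambda$ at critically periodic parameters: the minimum-with-respect-to-$\le_E$ convention in Definition~\ref{def:lambdaitinerary} may disagree with a one-sided limit of itineraries, so $\It_{\mu_k}$ need not converge to $\It_\lambda$ in the most naive sense. Tiozzo's analysis handles this subtlety by showing that $H(\It_\lambda,\cdot)$ is nevertheless the correct limiting analytic function regardless of the side from which $\mu_k\to\lambda$, essentially because the jump is supported at eventually periodic sequences that affect $H$ only through a cyclotomic factor on $S^1$. Once this continuity statement is invoked, the Hurwitz argument applies uniformly and the theorem follows.
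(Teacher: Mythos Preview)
Your strategy—continuity of the zero set of $H(\It_\lambda,\cdot)$ in $\lambda$, density of critically periodic parameters, and handling the one-sided jump of $\lambda\mapsto\It_\lambda$ via a cyclotomic-factor observation—is exactly the route the paper takes (its Proposition~\ref{p:rootsmovecontinuously} packages your Hurwitz step, and the jump is dealt with in the last paragraph of that proposition's proof). But Step~1 has two real problems.

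First, the clause ``the Galois conjugates of $\lambda$ lying outside $\overline{\mathbb{D}}$, which is by definition $\X_\lambda\setminus\overline{\mathbb{D}}$'' is false. By definition $\X_\lambda$ is the height-$\lambda$ slice of the \emph{closure} $\U$, so a priori it can strictly contain the finite set of Galois conjugates of that single $\lambda$: points can accumulate onto height $\lambda$ from nearby heights. Ruling out such extra accumulation is precisely what the theorem asserts, so assuming it in Step~1 is circular. Second, the ``Parry/Tiozzo factorization'' you invoke—that $P_w$ splits as the minimal polynomial of $\lambda$ times a factor whose remaining roots lie on $S^1$—is not a general fact about admissible words and is not what \cite{TiozzoGaloisConjugates} proves. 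Parry polynomials can have non-cyclotomic companion factors; the paper's Section~\ref{s:reducibleParrypolys} is devoted to this reducibility phenomenon for roots inside the disk, and Corollary~\ref{rem_irr_dense} only shows that the set $V$ of growth rates with the nice factorization is \emph{dense}, not exhaustive. Without the factorization you cannot conclude in Step~2 that the Hurwitz-produced root $z_k$ of $P_{w_k}$ is a Galois conjugate of $\mu_k$, and hence cannot place $(z_k,\mu_k)$ in $\U$. The fix is to take your approximating sequence $\mu_k$ from the dense set $V$ rather than from all critically periodic parameters; for those $\mu_k$ the factorization holds by construction and your Step~2 goes through. With that correction your argument and the paper's are essentially the same.
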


\begin{remark} 
 Theorems \ref{t:improvedinsidecharacterization} and \ref{t:improvedoutsidecharacterization} both provide algorithms to certify that a point is in the complement of $\X_\lambda$.  This is useful since the definition of $\U$ is constructive and involves taking a closure. Section \ref{sec:membership} describes these algorithms.
\end{remark}

\begin{remark} Since the set of $\lambda$-suitable sequences is semicontinuous with $\lambda$ (Lemma \ref{lem:semicont}), 
  Theorem \ref{t:improvedinsidecharacterization} implies that if $1<\lambda<\lambda' \leq 2$, then 
  \[\X_\lambda\cap\overline{\mathbb{D}}\subseteq \X_{\lambda'}\cap\overline{\mathbb{D}},\]
   which is the ``Persistence Theorem'' proved in \cite{BrayDavisLindseyWu}.  (The Persistence Theorem is used to prove Theorem \ref{t:improvedinsidecharacterization}.)
\end{remark}

\begin{remark} \label{rem:holdsfortoplevel}
Tiozzo showed in \cite{TiozzoGaloisConjugates}  that 
$$\Omega_2^{cp} \cap \overline{\mathbb{D}} = S^1 \cup \{z \in \mathbb{D} : G(w,z) = 1 \textrm{ for some sequence } w\},$$ and the Persistence Theorem (\cite{BrayDavisLindseyWu}) shows that $\Omega_2 \cap \overline{\mathbb{D}} = \X_2 \cap \overline{\mathbb{D}}$.  It is also known that the unit cylinder is in the teapot, i.e. $S^1 \times [1,2] \subset \Upsilon_2^{cp}$ (\cite{BrayDavisLindseyWu}). Since every sequence is $2$-suitable, this proves the conclusion of Theorem \ref{t:improvedoutsidecharacterization} for the top level of the teapot, the case $\lambda = 2$. 
\end{remark}

\begin{remark}
Our first step towards proving Theorem \ref{t:improvedinsidecharacterization} is proving Theorem  \ref{t:InsideParryConjugatesDontMatter}, and alternative characterization of slices $\Xi_{\lambda} \cap \overline{\mathbb{D}}$.  A corollary of Theorem  \ref{t:InsideParryConjugatesDontMatter} is that all roots in $\overline{\mathbb{D}}$ of all Parry polynomials coming from admissible words -- even reducible Parry polynomials -- are in the Thurston set $\Omega_2^{cp}$. 
\begin{corollary}
$\Omega_2^{cp} \cap \overline{\mathbb{D}}$ is the closure of the set of all roots in $\overline{\mathbb{D}}$ of all Parry polynomials associated to admissible words. 
\end{corollary}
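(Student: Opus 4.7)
The plan is to read off the corollary from Theorem~\ref{t:InsideParryConjugatesDontMatter} by specializing to $\lambda=2$ and invoking the equality $\Omega_2^{cp}\cap\overline{\mathbb{D}}=\X_2\cap\overline{\mathbb{D}}$ recorded in Remark~\ref{rem:holdsfortoplevel}. Let $\mathcal{R}$ denote the set of all roots in $\overline{\mathbb{D}}$ of Parry polynomials associated to admissible words; the goal is to prove $\Omega_2^{cp}\cap\overline{\mathbb{D}}=\overline{\mathcal{R}}$.

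First I would dispatch the easy inclusion $\Omega_2^{cp}\cap\overline{\mathbb{D}}\subseteq\overline{\mathcal{R}}$. By definition every point $z\in\Omega_2^{cp}\cap\overline{\mathbb{D}}$ is a limit of Galois conjugates $z_n$ of some growth rates $\lambda_n=e^{h_{top}(f_n)}$ with $f_n\in\mathcal{F}_2^{cp}$. Critical periodicity makes the kneading sequence of $f_n$ an admissible word, and its Parry polynomial has $\lambda_n$ as a root. Since the minimal polynomial of $\lambda_n$ divides this Parry polynomial, every Galois conjugate of $\lambda_n$—in particular $z_n$—lies in $\mathcal{R}$, so $z\in\overline{\mathcal{R}}$.

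The reverse inclusion is the content of Theorem~\ref{t:InsideParryConjugatesDontMatter}, whose whole point (as its name advertises) is that when one restricts to $\overline{\mathbb{D}}$, the ``extra'' roots of a reducible Parry polynomial—which need not be Galois conjugates of any critically periodic growth rate—still lie in the closure of genuine Galois conjugates. Specialized to $\lambda=2$, that theorem identifies $\X_2\cap\overline{\mathbb{D}}$ with $\overline{\mathcal{R}}$; here one uses that every sequence is $2$-suitable (noted after Definition~\ref{def:improvedlambdasuitability}), so no suitability restriction prunes the word set at the top level. Combining with $\Omega_2^{cp}\cap\overline{\mathbb{D}}=\X_2\cap\overline{\mathbb{D}}$ from Remark~\ref{rem:holdsfortoplevel} gives $\Omega_2^{cp}\cap\overline{\mathbb{D}}=\overline{\mathcal{R}}$, which is the corollary.

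The entire substance of the corollary is thus carried by Theorem~\ref{t:InsideParryConjugatesDontMatter}; the proof proper is a short bookkeeping exercise. The main obstacle is the one already resolved there, namely approximating extra roots of reducible Parry polynomials by true Galois conjugates inside the closed unit disk, so here the only care needed is to match the ``admissible words'' in the corollary statement with the class of words parameterizing Parry polynomials in the $\lambda=2$ case of Theorem~\ref{t:InsideParryConjugatesDontMatter}.
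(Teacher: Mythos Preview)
Your overall strategy—derive the corollary from Theorem~\ref{t:InsideParryConjugatesDontMatter} together with the identification $\Omega_2^{cp}\cap\overline{\mathbb{D}}=\X_2\cap\overline{\mathbb{D}}$ from Remark~\ref{rem:holdsfortoplevel}—matches the paper's intention, but the step ``specialize to $\lambda=2$'' does not work as written. Theorem~\ref{t:InsideParryConjugatesDontMatter} is stated only for $1<\lambda<2$; formally setting $\lambda=2$ makes the index set $\{\lambda':\lambda'>\lambda\}\cap(1,2]$ empty, so the intersection $\bigcap_{\lambda'>\lambda}Y_{\lambda'}$ degenerates to all of $\overline{\mathbb{D}}$, not to $\overline{\mathcal{R}}$. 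Your appeal to ``every sequence is $2$-suitable'' is also misplaced: $\lambda$-suitability pertains to Theorem~\ref{t:improvedinsidecharacterization}, while Theorem~\ref{t:InsideParryConjugatesDontMatter} involves no suitability condition whatsoever—the sets $Y_{\lambda'}$ are indexed by admissible \emph{words}, not by $\lambda$-suitable \emph{sequences}.

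The correct argument applies Theorem~\ref{t:InsideParryConjugatesDontMatter} at a height strictly below $2$, chosen depending on the word. Given an admissible word $w$, the sequence $w^\infty$ is periodic while $\It_2=1\cdot 0^\infty$ is not, so $w^\infty<_E\It_2$ strictly. Since $\It_{\lambda'}$ agrees with $\It_2$ on arbitrarily long prefixes as $\lambda'\to 2^-$, there exists $\lambda\in(1,2)$ with $w^\infty\le_E\It_{\lambda'}$ for every $\lambda'\in(\lambda,2]$. Hence any root $z\in\overline{\mathbb{D}}$ of $P_w$ lies in $\bigcap_{\lambda'>\lambda}Y_{\lambda'}=\X_\lambda\cap\overline{\mathbb{D}}$, and the Persistence Theorem then gives $z\in\X_2\cap\overline{\mathbb{D}}=\Omega_2^{cp}\cap\overline{\mathbb{D}}$. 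This yields $\overline{\mathcal{R}}\subseteq\Omega_2^{cp}\cap\overline{\mathbb{D}}$. (A minor additional point: in your easy inclusion, for $z\in S^1$ the approximating Galois conjugates $z_n$ need not lie in $\overline{\mathbb{D}}$; one should either restrict to $z\in\mathbb{D}$ and handle $S^1$ separately via the cyclotomic factors $1-z^{|w|}$ of Parry polynomials, or simply note that $S^1$ is contained in both sides.)
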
 
\noindent In particular, when using Parry polynomials to plot approximations of $\Omega_2^{cp}$, it is not necessary to check whether the Parry polynomials are irreducible. 
\end{remark}

As an application of Theorem \ref{t:improvedinsidecharacterization}, we will show that:    

\begin{mainthm}\label{t:teapotnotsymmetrical}
The part of the Master Teapot inside the unit cylinder is not symmetrical with respect to reflection across the imaginary axis, i.e. $\U \cap \mathcal{C}$ is not invariant  under the map $(z,\lambda) \mapsto (-z,\lambda)$.
\end{mainthm}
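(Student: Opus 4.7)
The strategy is to produce a specific height $\lambda_0 \in (1,2)$ and point $z_0 \in \mathbb{D}$ with $(z_0,\lambda_0) \in \U$ but $(-z_0,\lambda_0) \notin \U$. Theorem \ref{t:improvedinsidecharacterization} supplies both halves: membership of $z_0$ is certified by exhibiting a single $\lambda_0$-suitable sequence $w_0$ with $G(w_0,z_0) = 1$, while non-membership of $-z_0$ amounts to ruling out $G(w,-z_0) = 1$ for every $\lambda_0$-suitable $w$. Since $\U$ is closed, any such pair of points gives the desired asymmetry.

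First, I would choose $\lambda_0$ small enough that $\It_{\lambda'}$ begins with $1 \cdot 0^k \cdot 1$ for all $\lambda'$ slightly above $\lambda_0$, with $k$ large; this activates condition~(3) of Definition \ref{def:improvedlambdasuitability} and forbids $k+1$ consecutive $0$s in any $\lambda_0$-suitable sequence, drastically shrinking the candidate set. Taking $\lambda_0$ to be the leading root of a short-period Parry polynomial and $z_0$ a Galois conjugate of $\lambda_0$ lying strictly inside $\mathbb{D}$, the eventually periodic sequence $w_0 := \It_{\lambda_0}$ is $\lambda_0$-suitable and satisfies $G(w_0,z_0) = 1$ by the defining Parry relation.

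To exclude $-z_0$, I would bound $|G(w,-z_0) - 1|$ away from $0$ uniformly on the set of $\lambda_0$-suitable sequences. Three ingredients enter: continuity of $G(\cdot,-z_0)$ in the product topology on $\{0,1\}^{\mathbb N}$, compactness and shift-invariance of the closed set of $\lambda_0$-suitable sequences, and the contraction estimate $|G(w,-z_0) - F(\r(\pr_N(w)),-z_0)| \leq C|z_0|^N$ coming from $|{-z_0}| < 1$. Taking $N$ large reduces the test to the finitely many admissible prefixes of length $N$, each of which one checks by direct computation.

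The main obstacle is obtaining this uniform lower bound across all admissible prefixes. The heuristic source of the asymmetry is that $f_{1,-z_0}(x) = 2 + z_0 x$ behaves quite differently from $f_{1,z_0}(x) = 2 - z_0 x$, pushing iterates toward $2/(1-z_0)$ rather than $2/(1+z_0)$, and the combinatorial constraints of Definition \ref{def:improvedlambdasuitability} prevent any allowable itinerary from compensating for this sign flip. For suitably chosen $(\lambda_0,z_0)$, the resulting gap should be verifiable either by hand using the restricted prefix set guaranteed by condition~(3), or via the algorithmic certification described after Theorems \ref{t:improvedinsidecharacterization} and \ref{t:improvedoutsidecharacterization}.
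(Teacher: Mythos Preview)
Your overall architecture matches the paper's: exhibit a concrete pair $(z_0,\lambda_0)$, certify $(z_0,\lambda_0)\in\U$ from the fact that $z_0$ is a Galois conjugate of a critically periodic growth rate, and certify $(-z_0,\lambda_0)\notin\U$ by a finite-prefix test derived from Theorem~\ref{t:improvedinsidecharacterization}. The paper does exactly this with $\lambda_0=1.82$, a specific degree-$14$ Parry polynomial whose leading root is $\approx 1.815$, and Algorithm~\ref{alg:inner} run to depth $N=20$; your contraction estimate and compactness argument are precisely the content of Proposition~\ref{prop:alg2}.

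Two slips to correct. First, the witness sequence for $G(\cdot,z_0)=1$ is not $\It_{\lambda_0}$: if $\It_{\lambda_0}=w^\infty$ and $P_w(z_0)=0$, then Lemma~\ref{G-H-P} gives $G\bigl((\r(w))^\infty,z_0\bigr)=1$, and it is $(\r(w))^\infty$, not $w^\infty$, that Lemma~\ref{lem:reverseislambdasuitable} shows is $\lambda_0$-suitable. (This detour is in any case unnecessary, since $(z_0,\lambda_0)\in\U$ already by the definition of the teapot.) Second, your heuristic for choosing $\lambda_0$ is inverted: $\It_{\lambda'}=1\cdot 0^k\cdot 1\ldots$ with $k$ large occurs for $\lambda'$ close to $2$, not small (for instance $\It_{\sqrt 2}=10\cdot 1^\infty$ has $k=1$). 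Worse, slices at height below $\sqrt 2$ are actually \emph{symmetric} under $z\mapsto -z$ via the renormalization of \S\ref{sec:renormalizationdoubling}, so taking $\lambda_0$ small would make the argument fail outright; one must work with $\lambda_0\ge\sqrt 2$.
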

\noindent Since Galois conjugates occur in complex conjugate pairs, it is immediate that $(x+iy,\lambda) \in \U$ if and only if $(x-iy,\lambda) \in \U$.  

Theorem \ref{t:teapotnotsymmetrical} is suprising because the Thurston set, $\O$, which is the projection to $\mathbb{C}$ of $\U$, \emph{is} symmetrical under the map $z \mapsto -z$ (Proposition \ref{p:ThurstonSetSymmetrical}).  However, this asymmetry in the Master Teapot is confined to the slices of heights $\geq \sqrt{2}$; one can prove, via the renormalization procedure described in Section 2.3, that the unit cylinder part of slices of height $< \sqrt{2}$ are symmetrical under reflection across the imaginary axis.

\begin{remark}
Theorem \ref{t:improvedinsidecharacterization} allow us to interpret each slice $\X_\lambda\cap\mathbb{D}$ as an analogy of the Mandelbrot set.  The conclusion of Theorem \ref{t:improvedinsidecharacterization} for the top slice (c.f. Remark \ref{rem:holdsfortoplevel}) allows one to characterize $\X_2$ as the union of $S_1$ and the set of all parameters $z \in \mathbb{D}$ such that the point $1$ is an element of the limit set $\Lambda_z$ associated of the iterated function system generated by $f_{0,z}$ and $f_{1,z}$.  
 Theorem \ref{t:improvedinsidecharacterization} suggests viewing $\X_\lambda\cap\mathbb{D}$ as the set of parameters $z$ for which the point $1$ is an element of the ``limit set'' associated to the ``restricted iterated function system'' generated by $f_{0,z}$ and $f_{1,z}$ in which only the compositions represented by $\lambda$-suitable sequences are allowed. 
  \color{black}
  \end{remark}

 Based on numerical experiments, we propose the following conjectured analogy of the Julia-Mandelbrot correspondence:

\begin{conjecture} \label{conj:asymptoticallysimilar}
  For any complex number $|z|<1$, any $\lambda\in (1, 2]$, $\X_\lambda-z$ is asymptotically similar to the set
  \[J_z=\{G(w, z)-1:w\text{ is }\lambda-\text{suitable}\}.\]
\end{conjecture}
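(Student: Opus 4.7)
The plan is to combine the characterization in Theorem \ref{t:improvedinsidecharacterization} with a perturbative analysis in the spirit of Tan Lei's similarity theorem for the Mandelbrot set at Misiurewicz points. By Theorem \ref{t:improvedinsidecharacterization}, a point $\zeta\in\mathbb{D}$ lies in $\X_\lambda$ iff $G(w,\zeta)=1$ for some $\lambda$-suitable sequence $w$, while the elements of $J_z$ are the values $G(w,z)-1$ over the same class of sequences. Writing $\zeta=z+h$ and Taylor-expanding gives
\[G(w,\zeta)-1=\bigl(G(w,z)-1\bigr)+h\,\partial_z G(w,z)+O(h^2),\]
so at first order the equation $G(w,\zeta)=1$ becomes $h\approx -(G(w,z)-1)/\partial_z G(w,z)$. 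Thus near $0$ the set $\X_\lambda-z$ is obtained from $J_z$ by the sequence-dependent multiplier $-1/\partial_z G(w,z)$, which already gives the skeleton of the conjectured similarity.

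To promote this heuristic to an actual asymptotic similarity, I would use the explicit expansion $G(w,z)=\sum_{k\geq 1}2w_k(-1)^{s_k}z^{k-1}$, where $s_k=w_1+\cdots+w_{k-1}$, together with its term-by-term derivative. Both series converge geometrically on compacts in $\mathbb{D}$. Along one-parameter families of $\lambda$-suitable sequences that agree through some large index $N$ and differ only afterwards, the differences $G(w,z)-G(\tilde w,z)$ and $\partial_z G(w,z)-\partial_z G(\tilde w,z)$ both carry a common overall factor $z^N$, so the distortion ratio stabilizes. This should allow a renormalization $h\mapsto c_N h$ with a suitable $c_N\to\infty$ under which the rescaled slices $c_N(\X_\lambda-z)$ converge in Hausdorff distance on compact subsets of $\mathbb{C}$ to $J_z$ multiplied by a single constant.

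The hardest step will be controlling the $\lambda$-suitability constraint under perturbation of $z$ and under the limit $N\to\infty$. The $\lambda$-suitable sequences form a subshift whose constraints depend on $\It_{\lambda'}$ for all $\lambda'>\lambda$; Lemma \ref{lem:semicont} gives semi-continuity in $\lambda$, but one still needs a two-sided matching: every sequence $w'$ producing $G(w',\zeta)=1$ for $\zeta$ close to $z$ must correspond, in the limit, to a $\lambda$-suitable $w$ generating an element of $J_z$, and conversely no $\lambda$-suitable $w$ may be lost under zooming. The combinatorial matching of $\lambda$-suitable sequences, rather than the analytic linearization, is what I expect to be the true obstacle, and may in fact require strengthening the conjecture by imposing an open-set condition on the IFS $\{f_{0,z},f_{1,z}\}$ or restricting to $z$ that are themselves limits of combinatorially controlled parameters.
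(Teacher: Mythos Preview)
The statement you are trying to prove is labeled in the paper as a \emph{conjecture}, not a theorem; the paper offers no proof of it at all.  It is introduced with the phrase ``Based on numerical experiments, we propose the following conjectured analogy,'' and the only further remark is that a confirmation of it would imply Conjecture~\ref{conj:disconnected}.  So there is nothing in the paper to compare your argument against.

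As for the proposal itself, it is a heuristic sketch rather than a proof, and you essentially concede this in your final paragraph.  The linearization step gives $h\approx -(G(w,z)-1)/\partial_z G(w,z)$ with a multiplier that depends on the sequence $w$; to obtain a single Hausdorff limit you must show that the multipliers $\partial_z G(w,z)$ are, in the relevant limit, asymptotically constant (or at least that their variation is negligible compared to the scale $|G(w,z)-1|$).  Your claim that ``the distortion ratio stabilizes'' along families of sequences agreeing through index $N$ is not established: the common factor $z^N$ appears in both the value and the derivative, but the \emph{ratio} still carries an $O(1)$ dependence on the tail of $w$, and nothing forces those tails to be tame.  You also need the converse direction---that every nearby point of $\X_\lambda$ arises from a sequence close to one realizing a point of $J_z$---and this is exactly the ``two-sided matching'' you flag as the true obstacle.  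Finally, your closing sentence (``may in fact require strengthening the conjecture by imposing an open-set condition \ldots\ or restricting to $z$'') is an admission that the argument does not go through for the conjecture as stated.  In short: the outline captures the expected mechanism behind the conjecture, but it does not close any of the gaps, and the paper itself regards the statement as open.
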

\noindent By these two sets being asymptotically similar, we mean there exists a real number $r>0$ and sequences $(t_n), (t'_n) \in \mathbb{C}$ with $t_n, t_n' \to \infty$ such that, denoting Hausdorff distance by $d_{\textrm{Haus}}$, 
$$\lim_{n \to \infty} d_{\textrm{Haus}} \left( \overline{B_r(0)} \cap (t_n(\X_\lambda-z)),  \overline{B_r(0)} \cap (t_n' J_z) \right) = 0.$$

If the Conjecture \ref{conj:asymptoticallysimilar} is true, or at least true for ``enough'' points $z$, we would also be able to show the following:

\begin{conjecture}\label{conj:disconnected}
  There exists $\lambda\in (1, 2)$ such that $\X_\lambda\cap \overline{\mathbb{D}}$ has infinitely many connected components.
\end{conjecture}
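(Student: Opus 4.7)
The plan is to exploit Conjecture \ref{conj:asymptoticallysimilar} (or a localized version of it) as a transfer principle: it would suffice to exhibit some $\lambda \in (1,2)$ and some $z \in \mathbb{D}$ for which the set $J_z = \{G(w,z)-1 : w \text{ is } \lambda\text{-suitable}\}$ has infinitely many connected components accumulating at $0$. Asymptotic similarity then rescales copies of $\X_\lambda - z$ onto $J_z$ at arbitrarily large magnification, and if the disconnectedness in $J_z$ persists at every scale with uniform separation gaps, the same disconnectedness must hold for $\X_\lambda$ in shrinking neighborhoods of $z$, producing infinitely many components.

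The bulk of the work is therefore in constructing $J_z$ with the desired Cantor-like structure. I would view $J_z$ as the trace of the IFS generated by $f_{0,z}(x) = zx$ and $f_{1,z}(x) = 2 - zx$ restricted to the codings in the subshift $\Sigma_\lambda \subset \{0,1\}^{\mathbb{N}}$ of $\lambda$-suitable sequences. When $|z|$ is sufficiently small, $f_{0,z}$ and $f_{1,z}$ are strong contractions whose images on any fixed bounded set are well-separated, so the full unrestricted IFS has a Cantor limit set $\Lambda_z$; the set $J_z$ is then (up to translation by $1$) the image of the closure of $\Sigma_\lambda$ under the natural symbolic projection into $\Lambda_z$. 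The aim is to choose $\lambda$ so that $\Sigma_\lambda$ itself decomposes into infinitely many clopen pieces at arbitrarily small cylinder scales.

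For concreteness I would try $\lambda$ slightly below $\sqrt{2}$ and invoke clause (4) of Definition \ref{def:improvedlambdasuitability}: every $\lambda$-suitable sequence factors as $\mathfrak{D}'(w')$, where $w'$ is itself constrained by a nontrivial forbidden-word condition from clause (3). This two-level renormalization, combined with the smallness of $|z|$, produces a natural hierarchy of subcylinders in $\Sigma_\lambda$ whose IFS images are spatially isolated at infinitely many scales. An induction on the renormalization depth, using that each $\mathfrak{D}'$-block is evaluated by a composition of contractions of comparable norm, should give the necessary uniform gap estimates on $J_z$.

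The main obstacle is that Conjecture \ref{conj:asymptoticallysimilar} is itself open, and establishing it in full appears comparable in difficulty to the landing/similarity theorems for the Mandelbrot set. A workable substitute is to prove only a one-sided, local version: that $\X_\lambda - z$ contains, inside arbitrarily small balls around $0$, rescaled approximations of a fixed finite portion of $J_z$ separated by genuine gaps. Even this weaker statement will require a delicate renormalization argument matching the two rescalings $t_n, t_n'$, and will likely have to be tailored to the specific renormalizable $\lambda$ chosen above. A secondary difficulty is ensuring that Hausdorff convergence at each scale actually detects connected components, which will require uniform separation bounds on $J_z$ rather than merely topological disconnectedness.
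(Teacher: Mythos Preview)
The statement you are addressing is labelled a \emph{Conjecture} in the paper and is not proved there; there is no proof to compare your proposal against. The paper only indicates two heuristic routes: (i) derive it from Conjecture~\ref{conj:asymptoticallysimilar} (the Julia--Mandelbrot--type asymptotic similarity), and (ii) certify it by computer using an effective bound sharper than Proposition~\ref{p:effective}. Your proposal is an elaboration of route~(i), and you correctly flag that the core input, Conjecture~\ref{conj:asymptoticallysimilar}, is open. So at best this is a conditional argument, not a proof, and the paper itself does not claim more.

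Beyond that, your concrete choices create a tension you should be aware of. You want $|z|$ small enough that $f_{0,z},f_{1,z}$ have disjoint images on a bounded set, so that the full limit set $\Lambda_z$ is a Cantor set; that regime is essentially $|z|<1/2$. But for $|z|<1/2$ one has $z\notin\X_\lambda$ for every $\lambda\in(1,2)$ (see the remark after Proposition~\ref{p:effective}), hence by Theorem~\ref{t:improvedinsidecharacterization} no $\lambda$-suitable $w$ satisfies $G(w,z)=1$, i.e.\ $0\notin J_z$. Since the set of $\lambda$-suitable sequences is compact and $G(\cdot,z)$ is continuous, $J_z$ is compact, so $0$ is not even an accumulation point of $J_z$. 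Asymptotic similarity at such a $z$ then says both $t_n(\X_\lambda-z)\cap\overline{B_r(0)}$ and $t_n' J_z\cap\overline{B_r(0)}$ are eventually empty, which yields no information about components of $\X_\lambda$. To make the transfer principle bite you need a base point $z$ with $0\in J_z$, i.e.\ $z\in\X_\lambda$, which forces $|z|\ge 1/2$ and rules out the easy Cantor regime for the unrestricted IFS. The paper's own numerical evidence (Figures~\ref{f:highresslice} and~\ref{f:highresslice2}) points to $\lambda\approx 1.8$ and $z$ near the inner boundary of the ring with $\mathrm{Re}(z)<0$, not to $\lambda$ just below $\sqrt{2}$; your renormalization idea via clause~(4) of Definition~\ref{def:improvedlambdasuitability} would have to produce disconnectedness of the \emph{restricted} limit set at parameters where the full IFS images overlap, which is substantially harder than what you sketch.
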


 \noindent Figure \ref{f:highresslice} shows a constructive plot (in black) of the slice $\X_{1.8} \cap \overline{\mathbb{D}}$, while Figure \ref{f:highresslice2} shows (in white) points of $\overline{\mathbb{D}} \setminus \X_{1.8}$.
  Comparison of these images suggests the existence of multiple small connected components in the region $\textrm{Re}(z) < 0$ near the inner boundary of the ``ring.''

The Thurston set $\O$ is known to be path-connected and locally connected (Theorem 1.3 of \cite{TiozzoGaloisConjugates}).  It follows from Theorem \ref{t:improvedoutsidecharacterization} that for many heights $\lambda \in (1,2]$, the part of the slice of height $\lambda$ that is outside the unit cylinder consists of more than one connected component. 

Conjecture \ref{conj:disconnected} could be potentially proven by computation via an effective version of Theorem \ref{t:improvedinsidecharacterization} similar to Proposition \ref{p:effective}. However, a tighter bound than that obtained in Proposition \ref{p:effective} would probably be needed for the computation to be feasible. 

\begin{figure}[h] 
  \centering
    \includegraphics[width=\textwidth]{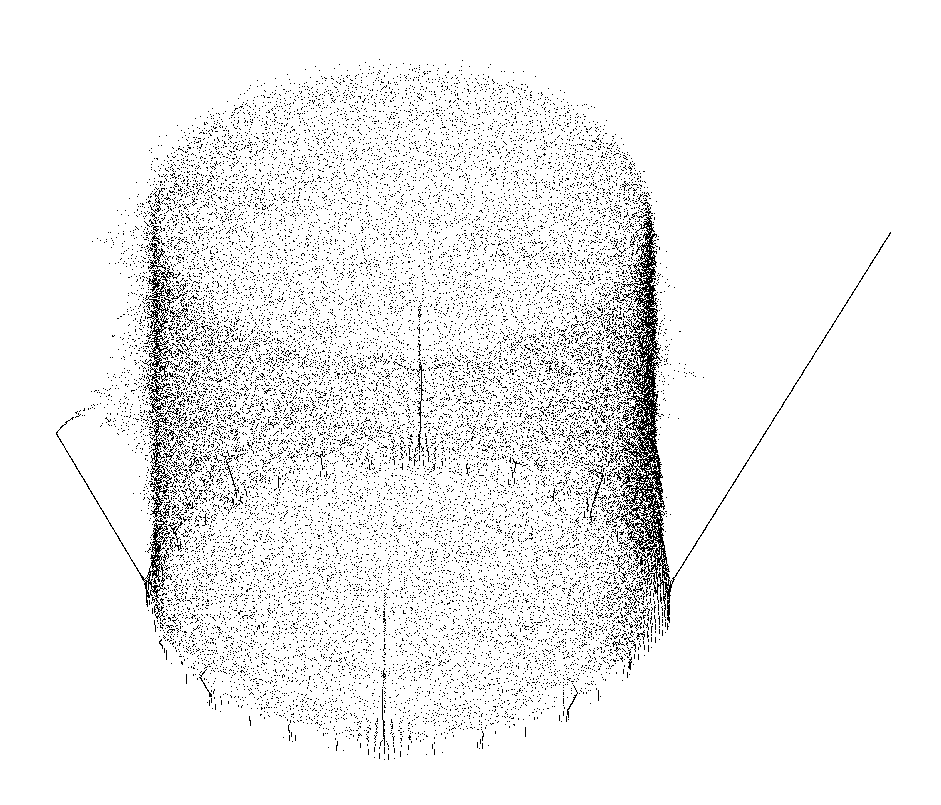}
    \caption{A constructive approximation of the part of $\U$ outside the unit cylinder.   This plot shows
the 56737 points outside the cylinder $S^1 \times [1,2]$ that are roots of the degree 100 partial sums of the kneading power series for $1000$ different growth rates $\lambda$ in $[1,2]$. The "spout" on the right side of the image consists of points of the form $(\lambda,\lambda)$.}
      \label{f:teapot}
\end{figure}

\begin{figure}[h]      
  \centering
    \includegraphics[width=\textwidth]{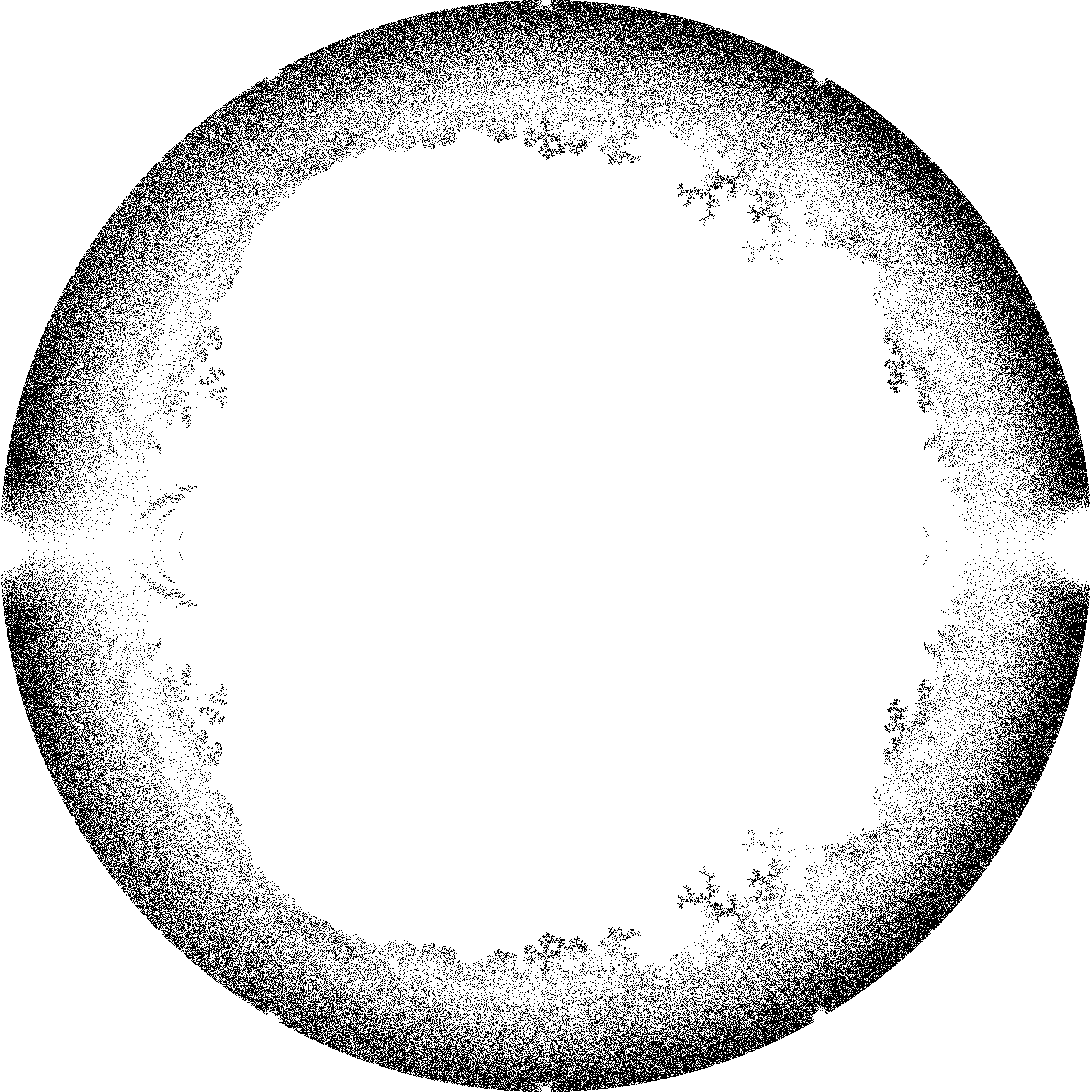}
    \caption{A constructive plot of an approximation of the slice  $\X_{1.8} \cap \mathbb{D}$.  The plotted black points are all the roots of modulus $\leq 1$ of all Parry polynomials for superattracting tent maps with growth rate $<1.8$ and critical length at most $29$.  }
     \label{f:highresslice}
\end{figure}

\begin{figure}[h]      
  \centering
    \includegraphics[width=\textwidth]{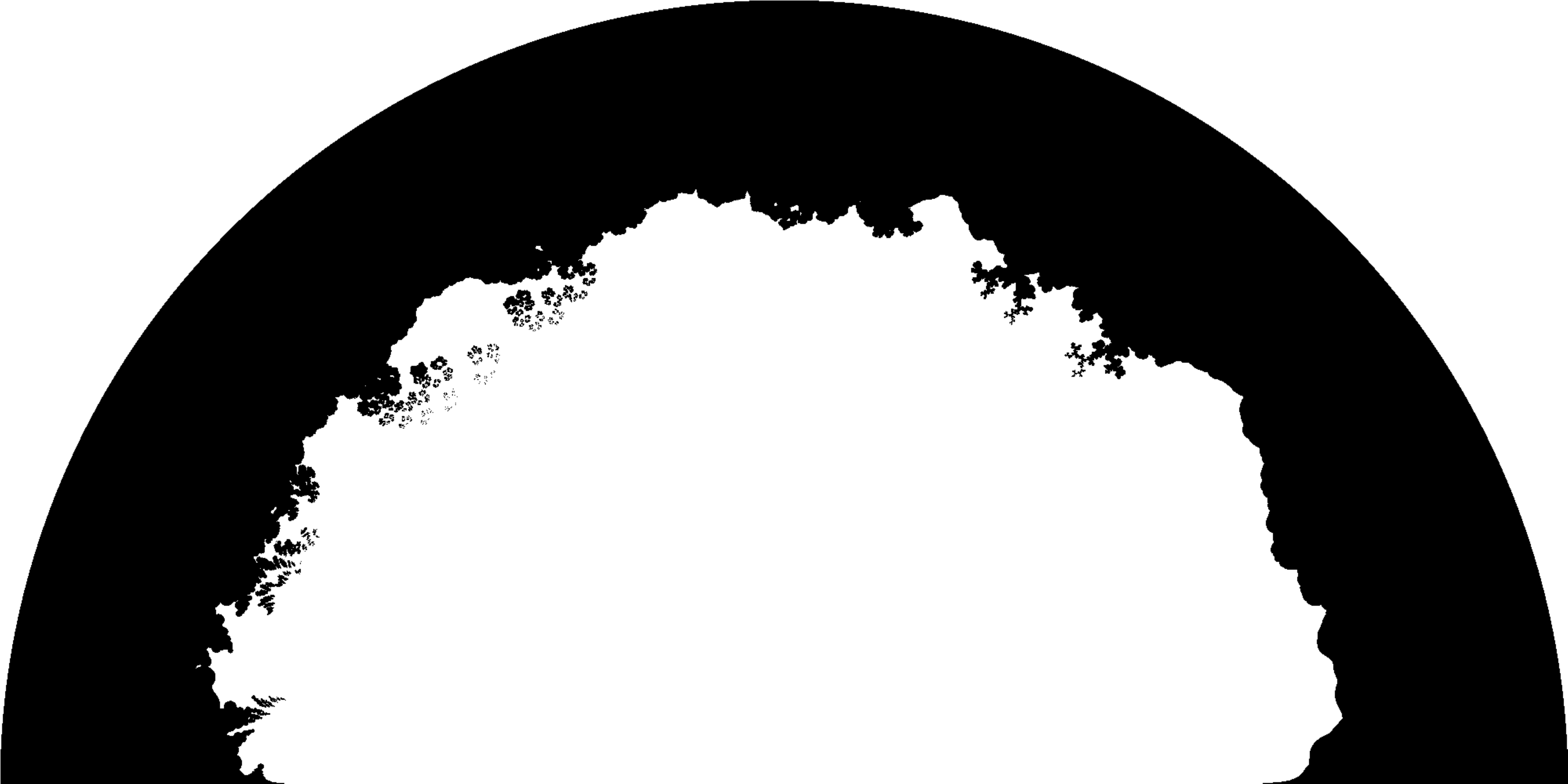}
    \caption{The upper half of the slice $\U \cap (\mathbb{D} \times \{1.8\})$ plotted using Theorem \ref{t:improvedinsidecharacterization}.  Specifically, the plotted white points 
    were shown to be in the complement of $\U$ (by checking the condition of Theorem \ref{t:improvedinsidecharacterization} for all $m \leq 18$). }
     \label{f:highresslice2}
\end{figure}

The structure of the paper is as follows:

\S \ref{s:preliminaries}: \textbf{Preliminaries} provide definitions and notation for Parry polynomials, admissible and dominant words and sequences, growth rates, and the renormalization/doubling operators.
 
\S \ref{sec:renormalizationdoubling}: \textbf{Properties of the doubling map} proves some elementary results about the doubling map which we will need in later sections to extend results about the top part of the teapot to the part with height $<\sqrt{2}$.  

\S \ref{s:reducibleParrypolys}: \textbf{Roots in $\mathbb{D}$ of reducible Parry polynomials} proves Theorem 
 \ref{t:InsideParryConjugatesDontMatter}, which implies that all roots in the unit disk of all Parry polynomials associated to admissible words are in the teapot. 
 
\S  \ref{sec:lambdasuitability}: \textbf{$\lambda$-suitability} discusses $\lambda$-suitability and proves Lemma \ref{lem:main}, which is the
 key combinatorial result we need to prove Theorem~\ref{t:improvedinsidecharacterization}.
 
\S \ref{s:insidecylinder}: \textbf{Characterization inside the unit cylinder} uses Theorem  \ref{t:InsideParryConjugatesDontMatter} and Lemma \ref{lem:main} to prove Theorem \ref{t:improvedinsidecharacterization}.

\S \ref{sec:outsidecharacterization}: \textbf{Characterization outside the unit cylinder} proves Theorem \ref{t:improvedoutsidecharacterization}.

\S  \ref{sec:membership}: \textbf{Algorithms to test membership of $\X_\lambda$} presents algorithms, derived from Theorems 
 \ref{t:improvedinsidecharacterization} and \ref{t:improvedoutsidecharacterization}, which will detect if a point $(z,\lambda) \in \mathbb{C} \times \mathbb{R}$ belongs to the complement of the height-$\lambda$ slice $\X_{\lambda}$, and proves lemmas that justify the algorithms. 
 
 \S \ref{sec:asymmetry}: \textbf{Asymmetry} proves Theorem \ref{t:teapotnotsymmetrical} by exhibiting a point $(z,\lambda)$ that is in the teapot and using the algorithm from \S \ref{sec:membership} to prove that $(-\bar{z},\lambda)$ is in  the complement of the slice $\X_{\lambda}$.

\subsection*{Acknowledgements} The authors thank Diana Davis for many helpful conversations.  Kathryn Lindsey was supported by the National Science Foundation under grant  DMS-1901247. 

\section{Preliminaries} \label{s:preliminaries}

\subsection{Concatenation} We use $\cdot$ or just adjacency to denote concatenations, i.e. for any word $w=w_1\ldots w_n$ and any word or sequence $v=v_1v_2\ldots$, $$w\cdot v = wv = w_1\ldots w_n v_1 v_2 \ldots.$$ We denote the concatenation of $n$ copies of a word $w$ by $w^n$, for $n \in \mathbb{N} \cup \{\infty\}$.

\subsection{Parry polynomials}
Let $w$ be a word with positive cumulative sign. The {\em Parry polynomial} of $w$, $P_w:\mathbb{C} \to \mathbb{C}$, is defined as 
\[P_w(z):=F(w, z)-1\]
 (cf. \cite[Definition 2.7]{BrayDavisLindseyWu}). It is evident that if $\It_\lambda=w^\infty$, then $\lambda$ is a root of $P_w$, and hence all Galois conjugates of $\lambda$ must be roots of $P_w$.

One can check by simple bookkeeping that for any word $w$ of positive cumulative sign, $P_w(z)$, $G(\r(w)^\infty, z)$ and $H(w^\infty, z)$ satisfy the following relationship:

\begin{lemma}\label{G-H-P}
If $w$ is of length $n$ and has positive cumulative sign, then
\[P_w(z)=(1-z^n)G\left(\r(w)^\infty, z \right)=z^n(1-z^{-n})H(w^\infty,z).\]
\qed
\end{lemma}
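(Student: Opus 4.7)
The plan is to reduce both identities to a single affine iteration. Observe first that $f_{0,z}$ and $f_{1,z}$ are both affine in $x$, so the composition
\[ T_w(x) \;:=\; f_{w_n,z}\circ\cdots\circ f_{w_1,z}(x) \]
is also affine; because $w$ has positive cumulative sign the slope is $z^n$ rather than $-z^n$, and the additive constant is pinned down by the evaluation $T_w(1)=F(w,z)$, which after rewriting via $P_w(z)=F(w,z)-1$ expresses it cleanly in terms of $P_w(z)$ and $1-z^n$. Since the word $w^m$ is $m$ concatenated copies of $w$, I would then use the identity $F(w^m,z)=T_w^m(1)$ and sum the resulting geometric series to obtain a closed-form expression for $F(w^m,z)$ in terms of $P_w(z)$, $z^n$, and $z^{mn}$ (valid wherever $z^n\ne 1$).

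The second step is to feed that closed form into the two defining limits of Definition~\ref{def:ifs}. The prefix $\pr_{mn}(w^\infty)$ is $w^m$, and $\r(\pr_{mn}(\r(w)^\infty))=w^m$ as well, so both $H(w^\infty,z)$ for $|z|>1$ and $G(\r(w)^\infty,z)$ for $|z|<1$ can be read off from the arithmetic subsequence $k=mn$ by sending $z^{-mn}\to 0$ or $z^{mn}\to 0$ respectively. A short algebraic rearrangement of the two resulting limits produces the claimed equalities with $P_w(z)$; note that the common factor $s(w^m)=s(w)^m=1$ in the definition of $H$ drops out because $w$ has positive cumulative sign.

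The third and only genuinely subtle point is subsequential versus full convergence: the limits defining $G$ and $H$ range over all indices $k$, not just multiples of $n$. For $k=mn+r$ with $0<r<n$, the same affine bookkeeping expresses the relevant quantity as $T_w^m$ applied to a bounded quantity depending only on $\suf_r(w)$ (for the $G$ side) or as a fixed affine postcomposition of $T_w^m(1)$ (for the $H$ side). Because $T_w$ is a strict contraction when $|z|<1$, every such $T_w^m$-orbit converges to the same fixed point; in the $H$ case the normalization $z^{-(mn+r)}$ kills the expansion of $T_w$ and the bounded residual contribution. Either way, the subsequential limit at $k=mn$ agrees with the full limit, so the closed-form computation really does compute $G$ and $H$. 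I do not foresee any essential obstacle — this is, in the authors' phrasing, ``simple bookkeeping'' — with the only thing to watch being the alignment between the affine iteration of $T_w$ and the normalization defining $P_w$.
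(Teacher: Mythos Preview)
Your approach is exactly the ``simple bookkeeping'' the paper has in mind (the paper gives no proof beyond the \qed), and your treatment of the geometric series for $T_w^m(1)$ and of subsequential versus full convergence is correct.

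One caution, though: when you carry the algebra through, the $G$-identity will not come out as printed. With $T_w(x)=z^nx+c$ and $c=F(w,z)-z^n=P_w(z)+1-z^n$, the fixed point of $T_w$ is
\[\frac{c}{1-z^n}=\frac{P_w(z)}{1-z^n}+1,\]
so your computation actually yields
\[P_w(z)=(1-z^n)\bigl(G(\r(w)^\infty,z)-1\bigr),\]
not $(1-z^n)\,G(\r(w)^\infty,z)$. For instance, with $w=11$ one has $P_w(z)=(1-z)^2$, whereas $(1-z^2)\,G(1^\infty,z)=(1-z^2)\cdot\frac{2}{1+z}=2(1-z)$. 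The $H$-identity, by contrast, does come out exactly as stated. The corrected $G$-form is in fact what the paper needs downstream: in the proof of Theorem~\ref{t:improvedinsidecharacterization} the conclusion drawn from the lemma is that roots of $P_w$ in $\mathbb{D}$ lie in $Z_{\lambda'}=\{z:G(w_r,z)=1\}$, which only follows from the version with the ``$-1$''. So your plan is sound; just expect the $G$-formula to disagree with the printed one by that missing subtraction of $1$.
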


\subsection{Admissibility, itineraries and dominance}

The \emph{shift map} $\sigma$ is defined on sequences by
\[\sigma(w_1w_2w_3\ldots) = (w_2w_3\ldots)\ .\]

A sequence $w=w_1w_2\ldots$ is 
a generalized symbolic coding of $f_\lambda$ for some $\lambda \in (1,2]$ iff
$$f^k_{\lambda}(1) \in I_{w_{k+1},\lambda}$$ for every integer $k \geq 0$.  
\color{black}
Because the point $1/\lambda$ belongs to both intervals $I_{0,\lambda}$ and $I_{1,\lambda}$, there may exist more than one generalized symbolic coding for the itinerary of the point $1$ under $f_{\lambda}$. The {\em $\lambda$-itinerary} $\It_{\lambda}$  is the least (with respect to $\leq_E$) such generalized symbolic coding. 

A sequence $w$ starting with $10$ is called {\em admissible} if $$\sigma^k(w)\le_E w$$ for all $k \in \mathbb{N}$. A word $w$ is called {\em admissible} if $w$ has positive cumulative sign and $w^\infty$ is admissible.  

We will use the following immediate consequence of Theorem 12.1 of \cite{MilnorThurston}   \begin{theorem} \label{th:realizableadmissible}
 For every $\lambda \in (1,2]$, $\It_{\lambda}$ is admissible. 
  \end{theorem}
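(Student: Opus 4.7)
The plan is to verify the two defining conditions of admissibility (starting with $10$ and $\sigma^k(\It_\lambda) \leq_E \It_\lambda$ for all $k$) using the Milnor-Thurston kneading framework specialized to the tent family. The initial letters are a direct computation: since $\lambda > 1$ gives $1/\lambda < 1$, the point $1$ lies in $I_{1,\lambda}$, forcing $(\It_\lambda)_1 = 1$; then $f_\lambda(1) = 2-\lambda$, and $(2-\lambda) < 1/\lambda$ is equivalent to $(\lambda-1)^2 > 0$, placing $f_\lambda(1)$ in $I_{0,\lambda}$ and giving $(\It_\lambda)_2 = 0$.

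The substantive claim $\sigma^k(\It_\lambda) \leq_E \It_\lambda$ for every $k \in \mathbb{N}$ I would deduce from a monotonicity principle adapted from Theorem 12.1 of \cite{MilnorThurston}. For each $x \in [0,1]$, let $\omega_x$ denote the $\leq_E$-minimum sequence such that $f_\lambda^j(x) \in I_{(\omega_x)_{j+1},\lambda}$ for all $j \geq 0$; so $\omega_1 = \It_\lambda$. I would prove by induction on the first disagreement index that $x \leq y$ implies $\omega_x \leq_E \omega_y$. The essential observation is that $s(\pr_{k-1}(\omega_x))$ records whether $f_\lambda^{k-1}$ is locally orientation-preserving or reversing on the cylinder of $x$, and the twisted lexicographic order is engineered precisely so that this sign converts the pointwise comparison of $f_\lambda^{k-1}(x)$ versus $f_\lambda^{k-1}(y)$ into the correct symbolic comparison at position $k$. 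Applied with $x = f_\lambda^k(1) \leq 1 = y$, this yields $\omega_{f_\lambda^k(1)} \leq_E \It_\lambda$.

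The remaining task is to identify $\omega_{f_\lambda^k(1)}$ with the shifted itinerary $\sigma^k(\It_\lambda)$. Outside any hit of the critical point $1/\lambda$ the two coincide trivially, since each coding symbol is then uniquely determined. The main obstacle, and the reason $\It_\lambda$ is defined as a $\leq_E$-minimum rather than an arbitrary valid coding, is the ambiguous case where $f_\lambda^j(1) = 1/\lambda$ for some $j < k$: the symbol $(\It_\lambda)_{j+1}$ is then \emph{a priori} not forced. I would resolve this by a short bookkeeping argument showing that the minimum tie-breaking at position $j+1$ of $\It_\lambda$ is compatible under the shift with the minimum tie-breaking for $\omega_{f_\lambda^k(1)}$. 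Concretely, the two continuations after a critical hit are $0\cdot \It_\lambda$ and $1\cdot \It_\lambda$, and the sign $s(\pr_j(\It_\lambda))$ selects the same branch in both sequences, so $\sigma^k(\It_\lambda)$ remains the $\leq_E$-minimum coding of $f_\lambda^k(1)$ and the inequality $\sigma^k(\It_\lambda) \leq_E \It_\lambda$ follows.
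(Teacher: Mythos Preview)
The paper does not supply its own proof here; it records the statement as a consequence of Theorem~12.1 of \cite{MilnorThurston} and moves on. Your proposal therefore goes beyond the paper, and your overall strategy --- prove that $x\mapsto\omega_x$ is $\le_E$-monotone and apply this with $x=f_\lambda^k(1)\le 1=y$ --- is the standard Milnor--Thurston argument. The computation of the initial block $10$ and the monotonicity sketch are fine.

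There is, however, a genuine gap in your last paragraph. The identification $\sigma^k(\It_\lambda)=\omega_{f_\lambda^k(1)}$ is false in general, and your justification (``the sign $s(\pr_j(\It_\lambda))$ selects the same branch in both sequences'') is wrong: the tie-break at a critical hit in position $j{+}1$ of $\It_\lambda$ is governed by $s(w_1\cdots w_j)$, whereas in position $j{+}1{-}k$ of a coding of $f_\lambda^k(1)$ it is governed by $s(w_{k+1}\cdots w_j)$, and these need not agree. For the golden mean $\lambda=(1+\sqrt5)/2$ one has $\It_\lambda=(101)^\infty$, so $\sigma(\It_\lambda)=(011)^\infty$, but the minimum coding of $f_\lambda(1)=2-\lambda$ begins with $00$. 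The repair is to strengthen the monotonicity lemma to: if $x<y$ then \emph{every} valid coding of $x$ is $<_E$ \emph{every} valid coding of $y$ (expansivity of $f_\lambda$ forces distinct points to have disjoint coding sets, and your cylinder-sign argument then gives the strict inequality). Since $\sigma^k(\It_\lambda)$ is \emph{some} valid coding of $f_\lambda^k(1)$, this yields $\sigma^k(\It_\lambda)<_E\It_\lambda$ whenever $f_\lambda^k(1)<1$; in the remaining case $f_\lambda^k(1)=1$ one checks directly that the minimum coding of a critically periodic orbit is itself periodic, whence $\sigma^k(\It_\lambda)=\It_\lambda$.
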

\color{black}

\begin{proposition}[\cite{BrayDavisLindseyWu}, Proposition 2.10] \label{prop:achievedasitinerary}
Let $w$ be a word with positive cumulative sign. If $w$ is admissible and the associated Parry polynomial, $P_w(z)$, can be written as the product of $(z-1)$ and another irreducible factor, then $w^{\infty} = \It_{\lambda}$ for some $\lambda \in (1,2]$. \end{proposition}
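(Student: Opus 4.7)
The plan is to realize $w^\infty$ as the itinerary of a concrete tent map whose growth rate is a root of $P_w$, using the irreducibility of the non-trivial factor to single out the correct parameter. I would proceed in three main steps.

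First, produce a candidate $\lambda \in (1,2]$. Since $w$ has positive cumulative sign, $P_w$ is monic of degree $n := |w|$. A direct calculation (using $f_{0,1}(x) = x$ and $f_{1,1}(1) = 1$, so $F(w,1) = 1$) shows $P_w(1) = 0$. Hence $(z-1) \mid P_w$, and by hypothesis $P_w(z) = (z-1) Q(z)$ with $Q$ irreducible of degree $n-1 \geq 1$. To extract a real root of $Q$ in $(1, 2]$, I would use a Perron--Frobenius style argument: the admissibility of $w$ encodes a primitive transition structure on an $n$-cycle, whose Perron eigenvalue $\lambda$ lies in $(1, 2]$ and satisfies $P_w(\lambda) = 0$. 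Irreducibility of $Q$ (together with $\deg Q \geq 1$) ensures $\lambda \neq 1$.

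Second, identify $\It_\lambda$ with $w^\infty$. Let $x_k := F(\pr_k(w), \lambda)$, so $x_0 = 1$ and $x_n = 1$. The aim is to show $x_k \in I_{w_{k+1}, \lambda}$ for $0 \leq k < n$, so that the first-period itinerary of $1$ under $f_\lambda$ equals $w$, and periodicity gives $\It_\lambda = w^\infty$. Admissibility enters here in an essential way: the condition $\sigma^k(w^\infty) \leq_E w^\infty$ should translate, via the monotonicity of $\mu \mapsto \It_\mu$ with respect to $\leq_E$, into the geometric statement that $x_k \leq 1$ and falls in the correct branch. Irreducibility of $Q$ then rules out $\It_\lambda$ having period $m$ properly dividing $n$: any such $m$ would give $\lambda$ as a root of a Parry polynomial of degree $m$, forcing the minimal polynomial of $\lambda$ to have degree less than $n-1$, contradicting $\deg Q = n - 1$.

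The main obstacle will be the translation between combinatorial admissibility and the geometric orbit structure, i.e., verifying that the formal orbit $x_k$ coincides with the actual orbit of $1$ under $f_\lambda$. I anticipate an induction on $k$: assuming $x_j \in I_{w_{j+1}, \lambda}$ for $j < k$, deduce the same for $j = k$ from the admissibility inequality applied to the shift $\sigma^k(w^\infty)$, using the order-preserving correspondence between itineraries and orbit positions on $[0,1]$. A secondary technical point is handling boundary ambiguities when $x_k = 1/\lambda$: here irreducibility of $Q$ is again useful, since such a coincidence within the first period would yield an algebraic relation on $\lambda$ of degree strictly less than $n - 1$, contradicting that $Q$ is the minimal polynomial of $\lambda$.
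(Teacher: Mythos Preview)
The paper does not prove this proposition; it is quoted from \cite{BrayDavisLindseyWu} without argument, so there is no proof here to compare against. I will comment on the proposal directly.

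The main gap is in Step~1. The assertion that admissibility of $w$ yields a primitive transition matrix with Perron eigenvalue in $(1,2]$ is not justified, and in fact fails without the irreducibility hypothesis. Concretely, $w=1010$ is admissible (it starts with $10$, has positive cumulative sign, and $(10)^\infty$ satisfies the shift inequality), yet one computes $P_{1010}(z)=(z-1)^3(z+1)$, which has no root in $(1,2]$; correspondingly $(10)^\infty\ne\It_\lambda$ for any $\lambda\in(1,2]$. So irreducibility of $Q$ must enter already when producing the candidate $\lambda$, not merely to rule out $\lambda=1$ afterwards. There is also a circularity lurking: the standard way to obtain a nonnegative transition matrix is from the Markov partition determined by the postcritical orbit of $f_\lambda$, which presupposes exactly the periodic orbit structure you are trying to establish in Step~2.

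Step~2 correctly locates the crux (showing the formal orbit $x_k$ agrees with the true orbit of $1$), but the proposed inductive translation of $\sigma^k(w^\infty)\le_E w^\infty$ into $x_k\in I_{w_{k+1},\lambda}$ for \emph{this particular} $\lambda$ is essentially the Milnor--Thurston realization theorem itself, not a consequence of monotonicity alone. A cleaner organization that avoids the circularity: set $\lambda=\sup\{\mu\in(1,2]:\It_\mu\le_E w^\infty\}$; monotonicity (Corollary~\ref{cor:monotonicity}) and the one-sided limit behaviour of $\mu\mapsto\It_\mu$ at periodic parameters then force either $\It_\lambda=w^\infty$, or $w^\infty$ to lie in a jump interval $(v^\infty,(v')^\infty]$ with $\It_\lambda=v^\infty$. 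In the latter case a degree count (your own idea) using that $Q$ is irreducible of degree $n-1$ and that $P_v(1)=0$, together with the positive cumulative sign of $w$, rules out both $|v|<n$ and $|v|=n$.
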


The following is a straightforward corollary of theorems of Milnor and Thurston (\cite{MilnorThurston}): 
\begin{corollary} \label{cor:monotonicity}
If $1 < \lambda < \lambda' \leq 2$, then $\It_{\lambda}<_E \It_{\lambda'}$.
\end{corollary}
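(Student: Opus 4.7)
The plan is to deduce this corollary from the monotonicity theorem for the kneading invariant in Milnor--Thurston's kneading theory, adapted to the one-parameter family of tent maps. The key external input is that within a full, monotone family of unimodal maps parametrized so that topological entropy is strictly increasing in the parameter, the kneading sequence is a monotone function of the parameter with respect to the twisted lexicographic order. For tent maps we have the additional convenient feature that $h_{\mathrm{top}}(f_\lambda) = \log \lambda$, which is strictly increasing on $(1,2]$.

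First I would match the paper's definition of $\It_\lambda$ to the classical kneading sequence. The paper defines $\It_\lambda$ as the $\leq_E$-minimum generalized symbolic coding of the orbit of $1$ under $f_\lambda$; the remark immediately after Definition~\ref{def:lambdaitinerary} already notes this agrees with the Milnor--Thurston convention. Thus the map $\lambda \mapsto \It_\lambda$ is precisely the kneading-sequence map $\lambda \mapsto K_\lambda$ for the tent family.

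Second I would establish the weak monotonicity $\It_\lambda \leq_E \It_{\lambda'}$ directly from the orbit of $1$. For each $k \geq 0$ the function $\lambda \mapsto f_\lambda^k(1)$ is continuous and piecewise polynomial on $(1,2]$. A standard argument in kneading theory shows that if, for some smallest index $k$, the symbols $(\It_\lambda)_{k+1}$ and $(\It_{\lambda'})_{k+1}$ disagree, then the side of the critical point $1/\lambda$ versus $1/\lambda'$ on which $f_\lambda^k(1)$ and $f_{\lambda'}^k(1)$ lie is determined by which parameter is larger; combined with how the signs flip under the right branch of the tent, this forces the disagreement to happen in the direction prescribed by $\leq_E$, giving $\It_\lambda \leq_E \It_{\lambda'}$.

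Third I would promote the inequality to strict. Since $h_{\mathrm{top}}(f_\lambda) = \log \lambda < \log \lambda' = h_{\mathrm{top}}(f_{\lambda'})$, and since the kneading sequence determines the topological entropy through the Milnor--Thurston kneading determinant, we must have $\It_\lambda \neq \It_{\lambda'}$. Combined with the weak monotonicity, this yields $\It_\lambda <_E \It_{\lambda'}$, as required. The only real subtlety is orientation-bookkeeping to confirm that the direction of the inequality matches the convention of $\leq_E$ in Definition~\ref{def:order}, but this is a direct check against the sign rule $s(\pr_{k-1}(w))(w_k' - w_k) > 0$ using that $s$ tracks orientation of the iterated branches of $f_\lambda$.
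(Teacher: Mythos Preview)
Your proposal is correct and follows essentially the same approach as the paper: the paper does not give an explicit proof but simply declares the result a ``straightforward corollary of theorems of Milnor and Thurston,'' and your argument is precisely a fleshed-out version of that citation, using monotonicity of the kneading invariant together with the strict monotonicity of $h_{\mathrm{top}}(f_\lambda)=\log\lambda$ to upgrade to a strict inequality.
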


 A word $w$ is called {\em dominant} (cf. \cite[Definition 4.1, Lemma 4.2]{BrayDavisLindseyWu}) if it has positive cumulative sign, and for any $1\leq k\leq |w|-1$,
 $$\suf_k(w)\cdot 1<_E\pr_{k+1}(w.)$$ Every dominant word is admissible, but admissible words may not be dominant.   A key property of the dominant words is the following, which is proved in \cite{TiozzoTopologicalEntropy}, and reviewed in \cite[Proposition 4.4]{BrayDavisLindseyWu}:

\begin{proposition}\label{prop:dense}
If $\lambda\in (\sqrt{2}, 2)$ and $\It_\lambda=w^\infty$, then for any $n>0$, there exists a word $w'$ such that $w^nw'$ is dominant.\qed

\end{proposition}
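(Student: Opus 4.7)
The plan is to find a nearby parameter $\lambda'>\lambda$ whose itinerary is periodic with a dominant period word starting with $w^n$; writing that period word as $u=w^n w'$ then yields the required $w'$.  This reduces the proposition to two sub-claims: (A) the set of $\lambda'\in(\sqrt{2},2)$ with $\It_{\lambda'}=u^\infty$ for some dominant $u$ of arbitrarily large length is dense in $(\sqrt{2},2)$, and (B) for every fixed $m$, if $\lambda'>\lambda$ is sufficiently close to $\lambda$ then $\pr_m(\It_{\lambda'})=\pr_m(w^\infty)$.

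Sub-claim (B) is a monotonicity/continuity argument: by Corollary~\ref{cor:monotonicity} we have $\It_{\lambda'}>_E\It_\lambda=w^\infty$ for every $\lambda'>\lambda$, and standard one-sided continuity of kneading invariants in the product topology on $\{0,1\}^{\mathbb{N}}$ forces the common prefix of $\It_{\lambda'}$ and $\It_\lambda$ to grow without bound as $\lambda'\searrow\lambda$.  Granting (A) and (B), choose $\lambda'>\lambda$ close enough that $\pr_{n|w|}(\It_{\lambda'})=w^n$ while $\It_{\lambda'}=u^\infty$ for some dominant $u$ with $|u|\geq n|w|$; then $u$ begins with $w^n$, so $u=w^n w'$ for some word $w'$, and this is the word required.

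The genuine work lies in (A).  The natural strategy is a direct combinatorial construction: starting from an admissible periodic word $v$ with $v^\infty$ close to $w^\infty$, one seeks an admissible extension $\tilde{u}$, of the form $v^N$ concatenated with a short filler, such that $\tilde{u}$ satisfies the $|\tilde{u}|-1$ strict inequalities $\suf_k(\tilde{u})\cdot 1 <_E \pr_{k+1}(\tilde{u})$ defining dominance, and such that $\tilde{u}^\infty$ is realized as $\It_{\tilde\lambda}$ for some $\tilde\lambda$ (invokable via a variant of Proposition~\ref{prop:achievedasitinerary} combined with intermediate-value arguments for topological entropy across tent-map parameters).  The hypothesis $\lambda>\sqrt{2}$ enters precisely here: it ensures that $\It_\lambda$ is not in the image of the doubling map $\mathfrak{D'}$, i.e.\ $f_\lambda$ is not renormalizable, so $w$ lacks the two-block symmetry that would otherwise force equalities rather than strict inequalities at every suffix-prefix comparison of even length.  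The main obstacle is showing that a single filler can simultaneously strictify every dominance inequality, rather than trading one violation for another, and this careful simultaneous control is the combinatorial core of the proof.
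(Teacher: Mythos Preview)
The paper does not give its own proof here; the statement is cited from \cite{TiozzoTopologicalEntropy} (and reviewed as \cite[Proposition~4.4]{BrayDavisLindseyWu}), where it is established by a direct combinatorial construction of the suffix $w'$.

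Your sub-claim (B) is false as stated. The kneading invariant is \emph{not} right-continuous at a parameter with periodic itinerary: when $\It_\lambda = w_0^\infty$ with $w_0$ the minimal period, the paper itself records, in the proof of Proposition~\ref{p:rootsmovecontinuously}, that $\lim_{\lambda'\to\lambda^+}\It_{\lambda'} = (w_0')^\infty$, where $w_0'$ agrees with $w_0$ except in its last letter. Hence for every $\lambda'>\lambda$, however close, $\It_{\lambda'}$ already disagrees with $w^\infty$ at position $|w_0|$, and so cannot begin with $w^n$ once $n|w|\ge |w_0|$. The one-sided continuity you invoke is \emph{left}-continuity; approaching with $\lambda'>\lambda$ is precisely the wrong side. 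You could attempt a repair by taking $\lambda'<\lambda$ instead, but then you must check that nothing else in your plan depended on $\lambda'>\lambda$.

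Even with that repair, the detour through (A) and (B) buys nothing. Your own sketch for (A) --- take $v^N$ and append a filler so that all the strict suffix--prefix inequalities defining dominance hold --- is exactly the direct combinatorial construction that the cited references carry out to prove the proposition itself, applied with $v=w$. Routing through nearby parameters and realized itineraries is extra scaffolding around that same combinatorial core, and it is the scaffolding, not the core, that introduced the error above.
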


\subsection{Growth rates and critically periodic tent maps}
When a continuous self-map $f$ of an interval is postcritically finite, the exponential of its topological entropy, $e^{h_{top}(f)}$, also called its \emph{growth rate}, is a \emph{weak Perron number} -- a real positive algebraic integer whose modulus is greater than or equal to that of all of its Galois conjugates.  This is because cutting the interval at the critical and postcritical sets yields a Markov partition; each of the resulting subintervals is mapped to a finite union of subintervals.   The leading eigenvalue of the associated incidence matrix is $e^{h_{top}(f)}$, which the Perron-Frobenius Theorem implies is a weak Perron number.

In the present work, we consider growth rates of critically periodic unimodal interval self-maps.  A unimodal map $f$ is said to be \emph{critically periodic} if, denoting the critical point of $f$ by $c$, there exists $n \in \mathbb{N}$ such that $f^n(c) = c$.  
A theorem of Milnor and Thurston (\cite[Theorem 7.4]{MilnorThurston}) tells us that, from the point of view of entropy, instead of considering all critically periodic unimodal maps, we only need to consider critically periodic tent maps. For tent maps, it is easy to see that the growth rate is just the slope $\lambda$.



\subsection{Renormalization and doubling} 

As shown in \cite[Section 3]{BrayDavisLindseyWu}, for any $1< \lambda<\sqrt{2}$, the tent map $f_\lambda$ is critically periodic if and only if the tent map $f_{\lambda^2}$ is critically periodic. (This phenomenon is related to renormalization of the Mandelbrot set.) Furthermore, whenever $1<\lambda<\sqrt{2}$, $\It_{\lambda}$ can be obtained from $\It_{\lambda^2}$ by replacing each $1$ in $\It_{\lambda}$ with $10$ and each $0$ in $\It_{\lambda}$ with $11$.
 That is, the \emph{doubling map} $\mathfrak{D}:\{0,1\}^n \to \{0,1\}^{2n}$, $n \in \mathbb{N} \cup \{\infty\}$, defined by  
\[\mathfrak{D}(w_1 w_2 \ldots ) = 1 \cdot (w_1 + 1 \bmod 2) \cdot 1 \cdot (w_2 + 1 \bmod 2) \cdot  \ldots \]
satisfies $\mathfrak{D}(\It_{\lambda^2}) = \It_{\lambda}$ whenever $f_{\lambda}$ with $1 < \lambda < 2$ is critically periodic.
We say that a sequence $w$ is \emph{renormalizable} if there exists a sequence $w'$ such that $w=\mathfrak{D}(w')$; in this case we say that $w$ is the \emph{doubling} of $w'$ and call $w'$ the \emph{renormalization} of $w$.  We define renormalizable, doubling and renormalization for words analogously.

\section{Properties of the doubling map} \label{sec:renormalizationdoubling}

The goal of this section is to prove some elementary properties of renormalizable words and sequences that we will use in later sections to extend results about the part of the teapot above height $\sqrt{2}$ to the lower part.

\begin{lemma} \label{lem:DpreservesOrder}

The doubling map $\mathfrak{D}$ preserves the twisted lexicographic ordering $\leq_E$, cumulative signs, and hence also  admissibility.
\end{lemma}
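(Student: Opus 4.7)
\medskip

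The plan is to verify the three claims (cumulative sign, order, admissibility) in that order, since each builds on the previous.

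For cumulative signs, I would argue by direct bookkeeping: since $\mathfrak{D}(0) = 11$ and $\mathfrak{D}(1) = 10$, each letter of $w$ is replaced by a block whose digit sum has the same parity as the original letter ($0 \leftrightarrow 2$, $1 \leftrightarrow 1$). Thus the parity of the total digit sum, and so $s(w)$, is preserved. The same holds for prefixes of the form $\pr_{2k}(\mathfrak{D}(w)) = \mathfrak{D}(\pr_k(w))$, a fact I will use repeatedly.

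For order preservation, I would take $w <_E w'$, let $k$ be the first index of disagreement, and let $u = \pr_{k-1}(w) = \pr_{k-1}(w')$. The images $\mathfrak{D}(w)$ and $\mathfrak{D}(w')$ agree through position $2(k-1)$ (where the common block is $\mathfrak{D}(u)$) and both have $1$ at position $2k-1$ (the first letter of any $\mathfrak{D}$-image of a single symbol is $1$). So the first disagreement between $\mathfrak{D}(w)$ and $\mathfrak{D}(w')$ is at position $2k$, with common prefix $\mathfrak{D}(u) \cdot 1$ of cumulative sign $-s(u)$. A short case analysis finishes the argument: if $s(u) = +1, w_k = 0, w'_k = 1$, then $\mathfrak{D}(w)_{2k} = 1$ and $\mathfrak{D}(w')_{2k} = 0$, and the common prefix sign is $-1$, giving $\mathfrak{D}(w) <_E \mathfrak{D}(w')$; the opposite case is symmetric.

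For admissibility, suppose $w$ starts with $10$ and $\sigma^k(w) \leq_E w$ for all $k \geq 1$. Then $\mathfrak{D}(w)$ starts with $\mathfrak{D}(1)\mathfrak{D}(0) = 1011$, so in particular begins with $10$. I will split shifts of $\mathfrak{D}(w)$ into even and odd. For $k = 2j$, the identity $\sigma^{2j}(\mathfrak{D}(w)) = \mathfrak{D}(\sigma^j(w))$ combined with the already-proved order preservation gives $\sigma^{2j}(\mathfrak{D}(w)) \leq_E \mathfrak{D}(w)$. For $k = 2j+1$, I would observe that the first letter of $\sigma^{2j+1}(\mathfrak{D}(w))$ is the second letter of $\mathfrak{D}(w_{j+1})$, namely $1 - w_{j+1}$; if this is $0$, then $\sigma^{2j+1}(\mathfrak{D}(w))$ starts with $0$ and is $<_E \mathfrak{D}(w) = 1\ldots$ immediately (empty prefix has sign $+1$). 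If it is $1$, then the second letter is the first letter of $\mathfrak{D}(w_{j+2})$, which is always $1$, so $\sigma^{2j+1}(\mathfrak{D}(w))$ begins with $11$ while $\mathfrak{D}(w)$ begins with $10$; the $1$-prefix has cumulative sign $-1$, and the twisted-lex criterion yields $\sigma^{2j+1}(\mathfrak{D}(w)) <_E \mathfrak{D}(w)$. The hardest — though still routine — part is keeping the odd-shift case analysis tidy; everything else is mechanical once the first-letter-of-any-$\mathfrak{D}$-image observation is in hand.
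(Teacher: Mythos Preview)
Your proposal is correct and follows essentially the same approach as the paper: both compute how $\mathfrak{D}$ transforms the digit-sum parity of prefixes to handle order and sign, and both infer admissibility from these. Your treatment is in fact more complete than the paper's, which simply writes ``and hence also admissibility'' without addressing the odd shifts $\sigma^{2j+1}(\mathfrak{D}(w))$; your case analysis (first letter $0$, or first two letters $11$ versus the initial $10$ of $\mathfrak{D}(w)$) is exactly the missing step, and it is correct.
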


\begin{proof}
If the number of $1$s in a word $w$ equals $n$, then for any letter $a$, the number of $1$s in $\pr_{2|w|+1}(\mathfrak{D}(w\cdot a))$ equals $2|w|+1-n$. It follows that if $n$ is odd, $w\cdot 1 <_E w \cdot 0$ and $\mathfrak{D}(w\cdot 1) <_E \mathfrak{D}(w\cdot 0)$; if $n$ is even, $w\cdot 0 <_E w\cdot 1$ and $\mathfrak{D}(w\cdot 0) <_E \mathfrak{D}(w\cdot 1)$.  Thus $\mathfrak{D}$ preserves $\leq_E$.  Furthermore, if a word $w$ has positive cumulative sign, then the number, $n$, of $1$'s in $w$ is even, implying that $\mathfrak{D}(w)$, which contains $2w-n$ $1$s, also has positive cumulative sign.  
 \end{proof}

\begin{lemma}\label{lem:DpreservesIter}
The doubling map $\mathfrak{D}$ takes itineraries to itineraries.  That is, if $\lambda^{2^k} = \lambda'$, then $\mathfrak{D}^k(\It_{\lambda'}) = \It_{\lambda}$. 
\end{lemma}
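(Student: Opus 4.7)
The plan is to argue by induction on $k$; the substantive content is the base case $k = 1$, which says: \emph{if $1 < \lambda \leq \sqrt{2}$ and $\lambda' = \lambda^{2} \in (1,2]$, then $\mathfrak{D}(\It_{\lambda'}) = \It_{\lambda}$}. The inductive step is then immediate because successively taking square roots of $\lambda'$ stays in the interval $(1, \sqrt{2}]$ as long as $\lambda' \leq 2$, so each application of $\mathfrak{D}$ in the iterated equality reduces to one instance of the base case.

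For the base case, the idea is to write $\mu := \lambda^{2}$ and construct an explicit affine semiconjugacy between the second iterate $f_{\lambda}^{2}$ and the tent map $f_{\mu}$, realized by
\[
\psi : [0,1] \to [0,1], \qquad \psi(z) := \frac{(\lambda-1) z + 2}{\lambda + 1}.
\]
Three properties must be checked, each by a short direct calculation: (i) $\psi(1) = 1$, so the basepoint $1$ of each orbit is respected; (ii) $\psi([0,1]) \subset [1/\lambda, 1] = I_{1,\lambda}$, since the inequality $\psi(0) = 2/(\lambda+1) \geq 1/\lambda$ is equivalent to $\lambda \geq 1$; and (iii) splitting on whether $z \leq 1/\mu$ or $z \geq 1/\mu$, and composing the appropriate piecewise formulas for $f_{\lambda}$, yields the semiconjugacy identity $f_{\lambda}^{2} \circ \psi = \psi \circ f_{\mu}$ on all of $[0,1]$.

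Setting $y_{k} := f_{\lambda}^{2(k-1)}(1)$ and $z_{k} := f_{\mu}^{k-1}(1)$, a one-line induction using the semiconjugacy gives $y_{k} = \psi(z_{k})$ for every $k \geq 1$. Writing $\It_{\lambda} = w_{1} w_{2} w_{3} \ldots$ and $\It_{\mu} = v_{1} v_{2} v_{3} \ldots$, the symbolic coding can then be read off one symbol at a time: every $y_{k}$ lies in $I_{1,\lambda}$ by property (ii), so $w_{2k-1} = 1$ for all $k$; and the condition $w_{2k} = 0$, namely $f_{\lambda}(y_{k}) \leq 1/\lambda$, translates under the semiconjugacy precisely to $z_{k} \geq 1/\mu$, which is the condition $v_{k} = 1$. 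Thus $w_{2k} \equiv 1 - v_{k} \pmod{2}$, and unpacking the definition of $\mathfrak{D}$ (which sends $0 \mapsto 11$ and $1 \mapsto 10$) yields $\mathfrak{D}(\It_{\mu}) = \It_{\lambda}$, which is the base case.

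The main subtlety is the boundary case where some $z_{k}$ equals $1/\mu$ exactly, so that the symbolic coding of the $\mu$-orbit is ambiguous; under $\psi$ this corresponds to $y_{k}$ landing at the unique peak $(2\lambda-1)/\lambda^{2}$ of $f_{\lambda}^{2}$ restricted to $I_{1,\lambda}$, where $f_{\lambda}(y_{k}) = 1/\lambda$ is the critical point, so the coding of the $\lambda$-orbit is ambiguous at the same indices. Definition~\ref{def:lambdaitinerary} resolves both ambiguities by selecting the $\leq_E$-minimum generalized coding, and these two choices are compatible because $\mathfrak{D}$ preserves $\leq_E$ by Lemma~\ref{lem:DpreservesOrder}. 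This is the only place where a real issue could arise, and once it is handled, the semiconjugacy calculation above delivers the identity $\mathfrak{D}(\It_{\mu}) = \It_{\lambda}$ on the nose.
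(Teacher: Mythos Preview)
Your proof is correct and follows essentially the same approach as the paper: both reduce to the base case $k=1$ via induction, both recognize that $f_{\lambda}^{2}$ restricted to $[2/(\lambda+1),1]$ is a tent map of slope $\lambda^{2}$, and both resolve the boundary ambiguity (when the orbit hits the critical point) by invoking that $\mathfrak{D}$ preserves $\leq_E$ (Lemma~\ref{lem:DpreservesOrder}). The only difference is that you make the affine semiconjugacy $\psi(z) = \frac{(\lambda-1)z+2}{\lambda+1}$ explicit and verify the identity $f_{\lambda}^{2}\circ\psi = \psi\circ f_{\mu}$ by direct computation, whereas the paper describes the same renormalization structure more informally via the pair of invariant intervals $[2/(\lambda+1),1]$ and $[2-\lambda, 2/(\lambda+1)]$ that $f_{\lambda}$ swaps.
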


\begin{proof} By induction, it is easy to see that we only need to prove it for $k=1$, i.e. $\mathfrak{D}(\It_{\lambda^2})=\It_\lambda$. For any $\lambda\leq \sqrt{2}$, the tent map $f_{\lambda}$ sends the interval $[2/(\lambda+1), 1]$ to $[2-\lambda, 2/(\lambda+1)]$ and vice versa. Hence $f^2_{\lambda}$ is a tent map from $[2/(\lambda+1), 1]$ of slope $\lambda^2$, and any $x=f^{2k}_{\lambda}(1)$ lies on the left hand side of the critical point of $f^2_{\lambda}$ if and only if $x$ and $f_{\lambda}(x)$ are both to the right of $1/\lambda$, while $f^{2k}_{\lambda}(1)$ lies on the left hand side of the critical point of $f^2_{\lambda}$ if and only if $x$ is to the right of $1/\lambda$ and $f_{\lambda}(x)$ is to the right of $1/\lambda$, and this finishes the proof for the case when $\It_{\lambda^2}$ is not periodic. The case when $\It_{\lambda^2}$ is periodic follows from this argument together with Lemma~\ref{lem:DpreservesOrder}.
\end{proof}

\begin{proposition}\label{pro:renorm1} \
 If $w$ is a word with positive cumulative sign and $w'$ is the renormalization of $w$, then 
\[P_w(z)={z-1\over z+1}P_{w'}(z^2).\]
\end{proposition}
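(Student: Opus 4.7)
My plan is to establish the identity at the level of $F$ rather than $P$, using an affine conjugacy that turns pairs of the original maps into the doubled-variable versions.

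Writing $m = |w'|$, the word $w = \mathfrak{D}(w')$ has length $2m$ with $w_{2i-1} = 1$ and $w_{2i} = w'_i + 1 \bmod 2$ for each $i = 1, \dots, m$. I will group the factors of
\[ F(w,z) \;=\; f_{w_{2m},z} \circ \cdots \circ f_{w_1,z}(1) \]
into $m$ consecutive pairs and rewrite each pair as a single map $g_{a,z} := f_{\bar a,z} \circ f_{1,z}$, where $\bar a := a + 1 \bmod 2$. A direct expansion using $f_{0,z}(x)=zx$ and $f_{1,z}(x)=2-zx$ gives $g_{0,z}(x) = 2 - 2z + z^2 x$ and $g_{1,z}(x) = 2z - z^2 x$, so in particular each $g_{a,z}$ is affine in $x$ with leading coefficient $\pm z^2$.

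Next I look for an affine change of coordinate $x = \alpha(z)\,y + \beta(z)$ that simultaneously conjugates $g_{0,z}$ to $f_{0,z^2}(y) = z^2 y$ and $g_{1,z}$ to $f_{1,z^2}(y) = 2 - z^2 y$. Matching coefficients in both cases yields the linear system that forces $\alpha(z) = (z-1)/(z+1)$ and $\beta(z) = 2/(z+1)$. The crucial observation is that $\alpha(z) + \beta(z) = 1$, so the seed $x = 1$ of the $g$-iteration corresponds under the coordinate change to $y = 1$, which is exactly the seed of the $f_{\cdot,\,z^2}$-iteration defining $F(w',z^2)$. Applying the conjugation $m$ times therefore gives
\[ F(w,z) \;=\; \frac{z-1}{z+1}\, F(w', z^2) \;+\; \frac{2}{z+1}. \]

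Finally, subtracting $1 = \alpha + \beta$ from both sides collapses the constant term:
\[ P_w(z) \;=\; F(w,z)-1 \;=\; \frac{z-1}{z+1}\bigl(F(w',z^2)-1\bigr) \;=\; \frac{z-1}{z+1}\,P_{w'}(z^2), \]
as required. (Lemma~\ref{lem:DpreservesOrder} guarantees that $w'$ also has positive cumulative sign, so $P_{w'}$ is defined.) There is no genuine obstacle here: the only creative step is recognizing that the paired maps $g_{a,z}$ are affine with matching leading behavior, after which the conjugating change of variable is forced by two coefficient equations and the key identity $\alpha + \beta = 1$ (which makes the initial point $1$ a fixed point of the coordinate change) falls out automatically.
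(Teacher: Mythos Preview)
Your proof is correct and takes a genuinely different route from the paper's. The paper proves the same intermediate identity $F(w,z)-1=\tfrac{z-1}{z+1}\bigl(F(w',z^2)-1\bigr)$ by induction on $|w'|$: it checks the base case $|w'|=1$ by hand and then does the inductive step in two cases depending on whether the last letter of $w'$ is $0$ or $1$, expanding the outer two compositions explicitly each time. Your argument instead identifies an explicit affine conjugacy $\Phi(y)=\tfrac{z-1}{z+1}\,y+\tfrac{2}{z+1}$ that simultaneously intertwines both pair-maps $g_{a,z}=f_{\bar a,z}\circ f_{1,z}$ with $f_{a,z^2}$ and fixes the seed $1$; this replaces the induction by a single structural observation applied $m$ times at once. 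The conjugacy viewpoint is more conceptual---it explains \emph{why} the factor $\tfrac{z-1}{z+1}$ appears and why the doubling map corresponds to $z\mapsto z^2$---whereas the paper's induction is purely computational and needs no ansatz for $\alpha,\beta$. Both approaches ultimately rely on the same two affine identities for $g_{0,z}$ and $g_{1,z}$; you package them as a conjugacy, the paper unrolls them step by step.
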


\begin{proof}
 Suppose $w$ and $w'$ are words satisfying $\mathfrak{D}(w')=w$. It is easy to see that if $w=w_1w_2\dots w_{2n}$ has positive cumulative sign, then $w'=w'_1w'_2\dots w'_n$ also has positive cumulative sign. So, (1) follows from the following more general statement: if $w'$ is any word, $w$ is the doubling of $w'$, then 
\begin{equation} \label{eq:Fequation} F(w, z)-1={z-1\over z+1}\left(F(w', z^2)-1\right).
\end{equation}
   We will prove \eqref{eq:Fequation} using induction on $|w'|$. In the base case $|w'|=1$, $w'=1$ or $w'=0$, and the statement is true by calculation.
    Now assume the statement is true for all words $w'$ such that  $|w'| \leq n-1$. Let $w'$ and $w$ be words with $|w'| =n$ and $\mathfrak{D}(w')=w$.  Let $w'_0$ be $w'$ with the last letter removed, and let $w_0$ be $w$ with the last two letters removed.  Then by the inductive hypothesis, 
  \[F(w_0, z)-1={z-1\over z+1}\left(F(w'_0, z^2)-1\right).\]
  We divide the inductive step into two cases:
  \begin{itemize}
  \item Case 1: $w'_n=0$. This implies $w=w_0\cdot 11$, so
    \begin{align*}
      F(w, z)-1 &=2-z \left(2-z(F(w_0, z))\right)-1\\
                &=2-z \left(2-z \left({z-1\over z+1}(F(w'_0, z^2)-1)+1 \right)\right)-1\\
                &={z-1\over z+1}\left(z^2F(w'_0, z^2)-1\right)\\
                &={z-1\over z+1}\left(F(w', z^2)-1\right) 
    \end{align*}
  \item Case 2: $w'_n=1$. This implies $w=w_0\cdot 10$, so
    \begin{align*}
      F(w, z)-1 &=z\left(2-z(F(w_0, z)) \right)-1\\
                &=z \left(2-z \left({z-1\over z+1}(F(w'_0, z^2)-1)+1\right)\right)-1\\
                &={z-1\over z+1} \left(2-z^2F(w'_0, z^2)-1\right)\\
                &={z-1\over z+1}\left(F(w', z^2)-1\right) 
    \end{align*}
  \end{itemize}

\end{proof}

\begin{proposition}\label{pro:renorm2} Let $w$ be an admissible word.  Then $w^{\infty}$ renormalizable if only if
 \[w^\infty<_E \It_{\sqrt{2}} \quad (= 10\cdot 1^\infty).\]
\end{proposition}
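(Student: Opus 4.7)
For the $(\Rightarrow)$ direction, suppose $w^\infty = \mathfrak{D}(w'^\infty)$ for some word $w'$, so that $w'^\infty$ is purely periodic. Then $w'^\infty \neq \It_2 = 1 \cdot 0^\infty$, since the latter has a unique $1$ and so is not of the form $u^\infty$ for any finite word $u$. As $1 \cdot 0^\infty$ is the maximum element of $\leq_E$, we have $w'^\infty <_E 1 \cdot 0^\infty$. The parity argument in Lemma~\ref{lem:DpreservesOrder} extends directly from words to sequences, so $\mathfrak{D}$ strictly preserves $\leq_E$, and applying it gives $w^\infty = \mathfrak{D}(w'^\infty) <_E \mathfrak{D}(1 \cdot 0^\infty) = 10 \cdot 1^\infty = \It_{\sqrt{2}}$.

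For the $(\Leftarrow)$ direction, assume $w$ is admissible with $w^\infty <_E \It_{\sqrt{2}}$; we will show that every odd-indexed entry of $w^\infty$ equals $1$, which is the condition for $w^\infty$ to be renormalizable. A direct unpacking of $\leq_E$ shows that a sequence $v$ satisfies $v >_E \It_{\sqrt{2}}$ if and only if $v$ begins with $10 \cdot 1^{2m} \cdot 0$ for some $m \geq 0$: the common prefix $10 \cdot 1^{2m}$ has cumulative sign $-1$, while the next letter is $0$ in $v$ and $1$ in $\It_{\sqrt{2}}$. Admissibility gives $\sigma^p(w^\infty) \leq_E w^\infty <_E \It_{\sqrt{2}}$ for every $p$, so no shift of $w^\infty$ begins with such a prefix; equivalently, $w^\infty$ contains no substring of the form $10 \cdot 1^{2m} \cdot 0$.

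The $m = 0$ case forbids the substring $100$ in $w^\infty$, which together with $w^\infty$ starting $10$ rules out any $00$ substring (extending a $00$ backwards until reaching a preceding $1$ would produce a $100$). Hence $w^\infty$ decomposes as $1^{a_0} \cdot 0 \cdot 1^{a_1} \cdot 0 \cdot 1^{a_2} \cdot 0 \cdots$ with $a_0 = 1$ and each $a_i \geq 1$; the remaining forbidden-substring condition then forces $a_i$ odd for every $i \geq 1$, since an even $a_i$ would yield a $10 \cdot 1^{a_i} \cdot 0$ substring. The position of the $i$-th $0$ equals $a_0 + a_1 + \cdots + a_i + (i+1)$, the sum of $i+1$ odd numbers plus $i+1$; both summands have parity $i+1$, so the total is even. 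Thus every $0$ in $w^\infty$ lies at an even position, every odd-indexed entry equals $1$, and $w^\infty$ is renormalizable.

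The main obstacle will be establishing the forbidden-prefix characterization of $v <_E \It_{\sqrt{2}}$: the parity of the cumulative sign of the common prefix $10 \cdot 1^{2m}$ is easy to miscount, and it must be tracked uniformly across the whole family of prefixes. After that step, the decomposition of $w^\infty$ into $1$-blocks separated by single $0$'s and the parity count placing every $0$ at an even position are both short.
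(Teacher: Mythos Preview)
Your proof is correct, and the $(\Leftarrow)$ direction follows essentially the paper's approach: both show that every $0$ in $w^\infty$ lies at an even position by decomposing into $1$-blocks and using admissibility together with the comparison to $10\cdot 1^\infty$ to force the gaps between consecutive $0$'s to be even. Your version is more explicit in first isolating the characterization ``$v >_E 10\cdot 1^\infty$ iff $v$ begins with $10\cdot 1^{2m}\cdot 0$'' and then applying it, whereas the paper jumps straight to the contradiction.

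For $(\Rightarrow)$ the two arguments diverge slightly. The paper argues directly that renormalizability places the second $0$ of $w^\infty$ at an even position $k$, and then reads off $w^\infty <_E 10\cdot 1^\infty$ from the cumulative sign of the common $(k-1)$-prefix. You instead pull back through $\mathfrak{D}$: writing $w^\infty=\mathfrak{D}(w'^\infty)$, you note that the periodic sequence $w'^\infty$ cannot equal the aperiodic $\leq_E$-maximum $1\cdot 0^\infty$, hence $w'^\infty <_E 1\cdot 0^\infty$, and then apply the order-preservation of $\mathfrak{D}$ from Lemma~\ref{lem:DpreservesOrder} together with $\mathfrak{D}(1\cdot 0^\infty)=10\cdot 1^\infty$. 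This is a clean alternative that reuses already-established machinery in place of a direct sign computation; the only small point left implicit is that the renormalization of a periodic sequence is again periodic (equivalently, that $|w|$ is forced to be even), which is immediate from the parity constraint on the positions of $0$'s.
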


\begin{proof}
Firstly, it is easy to see that a sequence is renormalizable if and only if all its odd index letters are $1$, and a word is renormalizable if and only if it has even length and all its odd indexed letters are $1$. Because any admissible word starts with $10$, an admissible word $w$ is renormalizable if and only if $w^\infty$ is admissible and renormalizable.

Now suppose $w^\infty$ is admissible and renormalizable. Suppose the second $0$ in $w^\infty$ is at the $k^{\textrm{th}}$ location. It suffices to show that $\pr_{k-1}(w^\infty)$ has positive cumulative sign, which is equivalent to showing that $k$ is even, because the $(k-1)$-prefix of $w^\infty$ and $10\cdot 1^\infty$ are the same. This is an immediate consequence of the admissibility of $w^\infty$.

Now we prove the other direction. The sequence $w^\infty$ being admissible implies that the first $0$ in $w^\infty$ is at the second location. If we can further prove that the distance between any two consecutive $0$s is even, then all $0$s are at even locations, hence $w^\infty$ is admissible. Denote by $i_k$ the location of the $k^{\textrm{th}}$ $0$.  Let $k_m$ is the smallest number such that $i_{k_m}-i_{k_{m-1}}$ is odd.  Then by definition of $<_E$, 
\[\sigma^{i_{k_{m-1}}-1}(w^\infty)>_E10\cdot 1^\infty.\] 
\end{proof}

\begin{remark}\label{rem:iter_renorm}
  By $k^{\textrm{th}}$ renormalization or $k^{\textrm{th}}$ doubling, we mean carrying out the renormalization or doubling on a word or sequence $k$ times.  Proposition~\ref{pro:renorm1} above implies that if $w'$ is the $k^{\textrm{th}}$ renormalization of $w$, then the roots of $P_w$ not on the unit circle are the $(2^k)^{\textrm{th}}$ roots of the roots of $P_{w'}$ that are not on the unit circle.
  
Furthermore, because renormalization of sequences preserves $<_E$ (Lemma \ref{lem:DpreservesOrder}), we can apply part Proposition~\ref{pro:renorm2} above repeatedly to show that if the $w_k$ is the $k^{\textrm{th}}$ doubling of $10\cdot 1^\infty$, $w$ is admissible and $w^\infty<_Ew_k$, then $w$ has a $k^{\textrm{th}}$ renormalization.
\end{remark}

\section{Roots in $\mathbb{D}$ of reducible Parry polynomials} \label{s:reducibleParrypolys}

The purpose of this section is to prove Theorem \ref{t:InsideParryConjugatesDontMatter}, an alternative characterization of sets $\X_\lambda\cap \overline{\mathbb{D}}$, for $\lambda \in (1,2]$, using the results in \cite{BrayDavisLindseyWu}.  An upshot of Theorem \ref{t:InsideParryConjugatesDontMatter} is that we do not need to worry about extraneous roots in $\mathbb{D}$ from reducible Parry polynomials. 

We will use the following four results from \cite{BrayDavisLindseyWu}:

\begin{theorem}\cite[Theorem 1  (``Persistence Theorem''), Theorem 2]{BrayDavisLindseyWu}\label{thm:persist}
If $(z, \lambda)\in\U$, $|z|\leq 1$, then so is $(z, y)$ for any $y\in [\lambda, 2]$.
\end{theorem}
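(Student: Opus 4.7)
The plan is to combine the density of dominant itineraries (Proposition \ref{prop:dense}) with a Hurwitz-type stability argument for roots of Parry polynomials inside the open unit disk. Since $\U$ is defined as a closure, it suffices to verify the conclusion on a dense subset of $\{(z,\lambda)\in\U:|z|\leq 1\}$, so one may assume that $\lambda$ is the growth rate of a critically periodic tent map with $\It_\lambda = w^\infty$ for a dominant word $w$ and that $z$ is a genuine Galois conjugate of $\lambda$ in $\overline{\mathbb{D}}$, i.e., a root of the Parry polynomial $P_w$. Because $S^1\times[1,2]\subseteq\U$ (see Remark \ref{rem:holdsfortoplevel}), the boundary case $|z|=1$ is automatic, and one may further restrict to $|z|<1$.

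Fix $y\in(\lambda,2]$. The analytic heart of the argument is the following: \emph{if} one can produce a sequence of admissible words $v_n$ (with positive cumulative sign) and associated growth rates $\mu_n\to y$ such that the periodic sequences $\r(v_n)^\infty$ converge to $\r(w)^\infty$ in the product topology on $\{0,1\}^\mathbb{N}$, then by Lemma \ref{G-H-P},
\[
P_{v_n}(z) \;=\; (1-z^{|v_n|})\, G(\r(v_n)^\infty, z),
\]
and the power series $G(\r(v_n)^\infty,z)$ converge to $G(\r(w)^\infty,z)$ uniformly on compact subsets of $\mathbb{D}$. The uniform convergence follows because each $f_{a,z}$ contracts by $|z|$, so agreement of $\r(v_n)^\infty$ with $\r(w)^\infty$ on the first $k$ positions forces $|G(\r(v_n)^\infty,z)-G(\r(w)^\infty,z)|$ to be $O(|z|^k)$ on $\{|z|\leq\rho\}$ for any fixed $\rho<1$. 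Hurwitz's theorem then produces, for all large $n$, a root $z_n\in\mathbb{D}$ of $P_{v_n}$ with $z_n\to z$; since $z_n$ is a Galois conjugate of $\mu_n$, the pair $(z_n,\mu_n)\in\U$ converges to $(z,y)$, giving $(z,y)\in\U$ in the closure.

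The hard part will be the combinatorial construction of the $v_n$, and this is the main obstacle. The tension is that forcing $\r(v_n)^\infty$ to agree with $\r(w)^\infty$ on the first $k$ positions amounts to forcing the last $k$ letters of $v_n$ to look like $\suf_r(w)\cdot w^q$ with $q|w|+r\geq k$; yet such a tail drives the growth rate $\mu_n$ of $v_n$ back toward $\lambda_w$ rather than up toward $y$. Two candidate strategies for resolving this tension are: (i) applying Proposition \ref{prop:dense} to $y$ to produce a dominant word $u$ with growth rate near $y$ and then splicing $u$-blocks with $w$-blocks in a carefully tuned proportion (with the $u$-blocks dominating so that $\mu_n\to y$, but with a trailing run of consecutive $w$-blocks long enough to give the product-topology convergence of $\r(v_n)^\infty$ to $\r(w)^\infty$); and (ii) a Perron--Frobenius-type argument on the incidence matrices of the Markov partitions of $f_\lambda$ and $f_y$, exploiting the fact that a refinement of a Markov partition should preserve (or accumulate on) the eigenvalues in $\overline{\mathbb{D}}$, in which case the Hurwitz argument above can be replaced by a matrix-perturbation statement. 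Either strategy sidesteps the naive obstruction by decoupling the ``tail agreement'' from an honest prefix-agreement of the itineraries.
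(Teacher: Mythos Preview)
This theorem is not proved in the present paper; it is quoted from \cite{BrayDavisLindseyWu} and used as a black box (see the list of imported results at the start of \S\ref{s:reducibleParrypolys}). There is therefore no proof here to compare against directly, though the machinery from \cite{BrayDavisLindseyWu} that drives that proof is visible in Propositions~\ref{lem:concat}--\ref{lem:lead_root_approx} and is redeployed in the proof of Theorem~\ref{t:InsideParryConjugatesDontMatter}.

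Your strategy (i) is on the right track and is essentially the argument of \cite{BrayDavisLindseyWu}, but your write-up has a genuine gap at the line ``since $z_n$ is a Galois conjugate of $\mu_n$.'' Hurwitz gives you a root $z_n\in\mathbb{D}$ of the Parry polynomial $P_{v_n}$, but a root of $P_{v_n}$ need not be a Galois conjugate of the leading root $\mu_n$: Parry polynomials are frequently reducible, and the irreducible factor containing $z_n$ may have nothing to do with the minimal polynomial of $\mu_n$. This is not a technicality one can wave away---it is the main difficulty of the Persistence Theorem, and it is exactly what Proposition~\ref{lem:concat} is engineered to handle: by choosing the splice $w_1 w' w_2^{m'}$ carefully one forces $P_{v_n}/(z-1)$ to be irreducible, so that every root in $\mathbb{D}$ is a genuine Galois conjugate (and, via Proposition~\ref{prop:achievedasitinerary}, $v_n^\infty$ is a bona fide itinerary). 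Neither of your two candidate strategies addresses this, and the naive concatenation $u\cdot w^m$ will not in general produce an irreducible, or even admissible, Parry polynomial.

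Two smaller points. First, your reduction to $w$ \emph{dominant} in the setup is both unnecessary and unjustified: not every critically periodic itinerary is $w^\infty$ for a dominant $w$, and dominance is only needed for the word $u$ near the target height $y$ (via Propositions~\ref{prop:dense} and~\ref{lem:lead_root_approx}), not for the source word $w$. Second, the ``tension'' you describe is overstated: by Proposition~\ref{lem:lead_root_approx} the leading root of $P_{u\cdot w^m}$ is pinned near $y$ by the dominant prefix $u$ alone, regardless of how many copies of $w$ follow; the length constraint $2m|w|>|u|>m|w|$ in Proposition~\ref{lem:concat} is there to force irreducibility, not to balance growth rates.
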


\begin{proposition}\cite[Lemma 5.3]{BrayDavisLindseyWu}\label{lem:concat}
 Let $w_1$ be dominant, $w_1>_E10\cdot 1^{|w_1|-2}$, $w_2$ be admissible, $w_1^\infty>_Ew_2^\infty$, and assume that there is some $m$ such that
  \[2m|w_2|>|w_1|>m|w_2|.\] Then there is some $w'$, some integer $m'\geq m$, such that $(w_1w'w_2^{m'})^\infty$ is admissible, 
  \[|w_1|+|w'|\geq m'|w_2|,\] and the Parry polynomial  $P_{w_1w'w_2^{m'}}(z)$ can be written as the product of  $(z-1)$ and another polynomial $Q(z)$ such that $Q(z^{2^k})$ is irreducible for all integers $k \geq 0$. 
\end{proposition}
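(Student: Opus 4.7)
The plan is to construct a word $w'$ and an integer $m'\geq m$ so that the concatenation $w := w_1\cdot w'\cdot w_2^{m'}$ is dominant; dominance will then give enough combinatorial structure to pin down the factorization of $P_w(z)$. For the construction itself I would use a denseness-of-dominant-words argument analogous to Proposition~\ref{prop:dense}: since $w_1$ is dominant and $w_2^\infty<_E w_1^\infty$, the infinite concatenation $w_1w_2^\infty$ is already admissible, and one can insert a short ``corrector'' block $w'$ between $w_1$ and the $w_2$-tail so that the dominance inequality $\suf_k(w)\cdot 1<_E \pr_{k+1}(w)$ is satisfied for every $1\leq k\leq |w|-1$. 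Taking $m'=m$, the length inequality $|w_1|+|w'|\geq m'|w_2|$ is automatic from the hypothesis $|w_1|>m|w_2|$, so we may freely pick $w'$ of whatever length is needed to enforce dominance.

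The factorization $P_w(z)=(z-1)Q(z)$ is then immediate: admissibility forces $w$ to have positive cumulative sign, and a direct induction on $|w|$ using the identities $f_{0,1}(x)=x$ and $f_{1,1}(x)=2-x$ shows $F(w,1)=1$, so $(z-1)$ divides $P_w(z)$ in $\mathbb{Z}[z]$.

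The heart of the proof is the irreducibility of $Q(z^{2^k})$ for every $k\geq 0$. The hypothesis $w_1>_E 10\cdot 1^{|w_1|-2}$ forces $w^\infty>_E \It_{\sqrt{2}}=10\cdot 1^\infty$, so by Proposition~\ref{pro:renorm2}, iterated as in Remark~\ref{rem:iter_renorm}, $w$ is not renormalizable at any order. The contrapositive of Proposition~\ref{pro:renorm1} rules out the factorizations $Q(z)=(z-1)Q_j(z^{2^j})$ that an order-$j$ renormalization would induce. To upgrade this to full irreducibility of $Q(z^{2^k})$ I would argue by contradiction via Capelli's theorem: if $Q(z^{2^k})$ were reducible, then some root $\lambda$ of $Q$ would be a square (or, at $k\geq 2$, a $-4\beta^4$) in $\mathbb{Q}(\lambda)$, so $\sqrt{\lambda}\in\mathbb{Q}(\lambda)$ would itself be a Perron-type algebraic integer realized dynamically as a tent-map growth rate, and $\It_\lambda$ would have to be the doubling of an itinerary, contradicting non-renormalizability. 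Irreducibility of $Q$ itself (the case $k=0$) follows from the primitivity of the Perron--Frobenius matrix associated to the dominant word $w$ together with Proposition~\ref{prop:achievedasitinerary}.

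The main obstacle I foresee is making rigorous the implication ``$\lambda$ is a $2^k$-th power in $\mathbb{Q}(\lambda)$'' $\Rightarrow$ ``$\It_\lambda$ is $k$-times renormalizable''. The easy direction comes directly from Proposition~\ref{pro:renorm1}. The converse requires matching the algebraic tower $\mathbb{Q}(\lambda)\supset\mathbb{Q}(\sqrt{\lambda})\supset\cdots$ against the combinatorial tower of doublings, and may force a careful choice of $m'$ (for instance, odd or coprime to some small modulus) to sidestep the Capelli $-4\beta^4$ exception at $k\geq 2$. A secondary technical point is ensuring that no cyclotomic factor of $P_w$ contaminates $Q$; the length margin $|w_1|+|w'|\geq m'|w_2|$ is precisely the slack the proof will ultimately use to bound away such contributions.
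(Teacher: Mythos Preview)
This proposition is quoted from \cite[Lemma~5.3]{BrayDavisLindseyWu} and the present paper does not prove it; it is used as a black box in the proof of Theorem~\ref{t:InsideParryConjugatesDontMatter}. So there is no proof in this paper to compare your proposal against.

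On its own merits, your sketch has a genuine gap at the irreducibility step. For the base case $k=0$ your two cited ingredients do not do what you need. Proposition~\ref{prop:achievedasitinerary} takes irreducibility of $P_w/(z-1)$ as a \emph{hypothesis}, not a conclusion, so invoking it to establish that $Q$ is irreducible is circular. And primitivity of the incidence matrix associated to $w$ says nothing about irreducibility of its characteristic polynomial over $\mathbb{Z}$: primitive non-negative integer matrices routinely have reducible characteristic polynomials. For $k\geq 1$ the Capelli reduction is reasonable in outline---if $Q$ is irreducible with leading root $\lambda$, then irreducibility of $Q(z^{2^k})$ is equivalent to irreducibility of $z^{2^k}-\lambda$ over $\mathbb{Q}(\lambda)$---but the whole argument then rests on the implication you yourself flag as the main obstacle, namely that $\sqrt{\lambda}\in\mathbb{Q}(\lambda)$ forces $\It_\lambda$ to be renormalizable. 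Nothing in Propositions~\ref{pro:renorm1}--\ref{pro:renorm2} gives that converse: non-renormalizability of $w$ is a combinatorial statement about the word, while $\lambda$ being a square in $\mathbb{Q}(\lambda)$ is an arithmetic statement about the number field, and the bridge between them does not come for free.

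Notice also that by setting $m'=m$ at the outset you throw away a degree of freedom the statement explicitly grants. The shape of the hypotheses---the two-sided bound $2m|w_2|>|w_1|>m|w_2|$ and the conclusion phrased as ``there is some $m'\geq m$''---strongly suggests that the intended argument manufactures an infinite family of admissible candidates by varying $m'$ (and adjusting $w'$ accordingly), and then shows that the irreducibility conditions on $Q(z^{2^k})$ can fail for at most finitely many members of the family. Your construction, with $m'$ frozen, produces a single candidate and then has to prove irreducibility for that specific polynomial, which is a much harder problem.
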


\begin{proposition}\cite[Lemma 5.5]{BrayDavisLindseyWu}\label{lem:root_approx}
If $w_2$ is an admissible word and $z\in\mathbb{D}$ is a root of $P_{w_2}$, then for any $\epsilon>0$ there exists $N\in \mathbb{N}$ such that for any word $w_1$ and any integer $n\ge N$, $P_{w_1w_2^n}$ has a root within distance $\epsilon$ of $z$. 
\end{proposition}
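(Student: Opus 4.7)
The plan is to write $P_{w_1 w_2^n}(\zeta)$ as an explicit combination of $P_{w_1}(\zeta)$ and $P_{w_2}(\zeta)$, show that the term depending on $w_1$ decays uniformly as $n\to\infty$, and then apply Rouch\'e's theorem to transfer the given root $z$ of $P_{w_2}$ to a nearby root of $P_{w_1 w_2^n}$.

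The first step is a concatenation identity: for arbitrary words $u,v$,
\[
P_{uv}(\zeta) \;=\; s(v)\,\zeta^{|v|}\,P_u(\zeta) + P_v(\zeta),
\]
which follows from the fact that the composition $f_{v_{|v|},\zeta}\circ\cdots\circ f_{v_1,\zeta}$ is the affine map $x\mapsto s(v)\zeta^{|v|}x + \bigl(F(v,\zeta)-s(v)\zeta^{|v|}\bigr)$, so $F(uv,\zeta)=s(v)\zeta^{|v|}F(u,\zeta)+F(v,\zeta)-s(v)\zeta^{|v|}$. Because $w_2$ is admissible it has positive cumulative sign $s(w_2)=1$, and iterating yields the closed form
\[
P_{w_1 w_2^n}(\zeta) \;=\; \zeta^{n|w_2|}\,P_{w_1}(\zeta) \;+\; \frac{\zeta^{n|w_2|}-1}{\zeta^{|w_2|}-1}\,P_{w_2}(\zeta).
\]

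The second step is a uniform-in-$w_1$ bound. An easy induction on $|u|$ using $F(u\cdot 0,\zeta)=\zeta F(u,\zeta)$ and $F(u\cdot 1,\zeta)=2-\zeta F(u,\zeta)$ shows $|F(u,\zeta)|\leq 2/(1-|\zeta|)$ for every word $u$ and every $\zeta$ with $|\zeta|<1$. Fixing $r$ with $|z|<r<1$, there is then a constant $M=M(r)$ such that $|P_{w_1}(\zeta)|\leq M$ on $\overline{B_r(0)}$ for every word $w_1$. Consequently the term $\zeta^{n|w_2|}P_{w_1}(\zeta)$ tends to $0$ at rate $r^{n|w_2|}M$, giving the uniform convergence
\[
P_{w_1 w_2^n}(\zeta) \;\longrightarrow\; \frac{P_{w_2}(\zeta)}{1-\zeta^{|w_2|}} \qquad\text{on }\overline{B_r(0)},
\]
uniform both in $\zeta$ and in the choice of $w_1$. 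The limit is holomorphic on $\overline{B_r(0)}$ because $r^{|w_2|}<1$.

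The final step is Rouch\'e's theorem. Pick $\epsilon'<\epsilon$ small enough that $\overline{B_{\epsilon'}(z)}\subset B_r(0)$ and $z$ is the only zero of $P_{w_2}(\zeta)/(1-\zeta^{|w_2|})$ in $\overline{B_{\epsilon'}(z)}$; by compactness there is $c>0$ with $|P_{w_2}(\zeta)/(1-\zeta^{|w_2|})|\geq c$ on $\partial B_{\epsilon'}(z)$. The uniform convergence above produces a single $N$, independent of $w_1$, such that for all $n\geq N$ and all words $w_1$,
\[
\bigl|P_{w_1 w_2^n}(\zeta)-P_{w_2}(\zeta)/(1-\zeta^{|w_2|})\bigr|<c \quad\text{on }\partial B_{\epsilon'}(z),
\]
and Rouch\'e's theorem then delivers a zero of $P_{w_1 w_2^n}$ inside $B_{\epsilon'}(z)\subset B_\epsilon(z)$. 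The main obstacle I anticipate is securing the uniform-in-$w_1$ bound on $|P_{w_1}|$; without it the choice of $N$ could depend on $w_1$ and the statement would only be provable for each fixed $w_1$. Once this bound is in hand, the rest is a standard normal-families / Rouch\'e computation.
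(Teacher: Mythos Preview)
The paper does not prove this proposition; it is quoted from \cite[Lemma 5.5]{BrayDavisLindseyWu} and stated without proof here, so there is no in-paper argument to compare against.

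Your argument is correct and self-contained. The concatenation identity $P_{uv}(\zeta)=s(v)\zeta^{|v|}P_u(\zeta)+P_v(\zeta)$ follows exactly as you say from the affine form of $f_{v_{|v|},\zeta}\circ\cdots\circ f_{v_1,\zeta}$, and iterating with $s(w_2)=1$ gives your closed form. The uniform bound $|F(u,\zeta)|\le 2/(1-|\zeta|)$ is precisely the statement that the disk of radius $2/(1-|\zeta|)$ is forward-invariant under $f_{0,\zeta}$ and $f_{1,\zeta}$, a fact the paper itself uses later in the proof of Proposition~\ref{prop:alg2}; this is what makes the choice of $N$ independent of $w_1$, resolving the obstacle you flagged. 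The Rouch\'e step is standard and correctly executed.

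One minor notational point: the paper defines $P_w$ only for words of positive cumulative sign, so when $w_1$ is arbitrary you should, strictly speaking, write $F(w_1,\zeta)-1$ in place of $P_{w_1}(\zeta)$. This is cosmetic and does not affect the argument.
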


\begin{proposition}\cite[Lemma 5.7, Remark 5.8]{BrayDavisLindseyWu}\label{lem:lead_root_approx}
If $y\in [\sqrt{2}, 2]$, for any $\epsilon>0$, there exists a dominant word $w_1$ such that for any word $w_2$, the leading root of $P_{w_1w_2}$ is within distance $\epsilon$ of $y$, and  $w_1>_E10\cdot 1^{|w_1|-2}$.
\end{proposition}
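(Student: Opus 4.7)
The plan is to construct a long dominant word $w_1$ whose own leading Parry root $\lambda_1$ lies within $\epsilon/2$ of $y$, then show via a Rouch\'e/IVT perturbation argument that appending any $w_2$ displaces the leading root by at most an additional $\epsilon/2$, uniformly in $w_2$. To produce such a $w_1$, fix a critically periodic growth rate $\lambda$ in $(y-\epsilon/4,y+\epsilon/4)\cap(\sqrt{2},2]$ --- these are dense --- so that $\It_\lambda=w^\infty$ for an admissible $w\neq 10\cdot 1^{|w|-2}$ (generic choice). By Proposition~\ref{prop:dense}, some $w'$ makes $w_1:=w^nw'$ dominant; for $n$ large, a preliminary version of the perturbation argument below, applied with $w^n$ in place of $w_1$ and $w'$ in place of $w_2$, shows $|\lambda_1-y|<\epsilon/2$. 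This preliminary step works because the factorization $P_{w^n}(z)=P_w(z)\cdot(1+z^{|w|}+\cdots+z^{(n-1)|w|})$ places all non-leading roots of $P_{w^n}$ on the unit circle, which has modulus $1<\lambda$. The side condition $w_1>_E 10\cdot 1^{|w_1|-2}$ then follows from $\lambda>\sqrt{2}$ via Corollary~\ref{cor:monotonicity}, with the genericity ensuring the first difference from $10\cdot 1^\infty$ occurs within the first $|w|$ positions.

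The core algebraic identity, proved by induction on $|w_2|$ from $f_{0,z}(x)=zx$ and $f_{1,z}(x)=2-zx$, is
\[
P_{w_1w_2}(z)\,=\,s(w_2)\,z^{|w_2|}\,P_{w_1}(z)\,+\,R_{w_2}(z),
\]
where $R_{w_2}(z)=s(w_2)z^{|w_2|}+H_{w_2}(z)-1$ with $H_{w_2}$ of degree at most $|w_2|-1$ and coefficients bounded by $2$. Factoring out $|z|^{|w_2|}$ from the trivial coefficient bound yields $|R_{w_2}(z)|\le C(R)\,R^{|w_2|}$ on any circle $|z|=R>1$ with $C(R)$ independent of $w_2$ --- this uniformity is the crucial feature.

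Set $R:=\lambda_1+\epsilon/2$. All roots of $P_{w_1}$ lie in the closed disk of radius $\lambda_1$ (by the Perron structure of the Markov incidence matrix for dominant $w_1$), so $|P_{w_1}(z)|$ grows on $|z|=R$ at least like $(R/\lambda_1)^{|w_1|}$ up to bounded factors; choose $|w_1|$ large enough that $|P_{w_1}(z)|>C(R)$ uniformly on this circle, whereupon Rouch\'e's theorem forces every zero of $P_{w_1w_2}$ into $|z|<R$, giving the upper bound on the leading root. The same strict inequality $|R_{w_2}(z)|<|z|^{|w_2|}|P_{w_1}(z)|$ persists on the real segment $[\lambda_1-\epsilon/2,\lambda_1+\epsilon/2]\setminus\{\lambda_1\}$, so $P_{w_1w_2}$ inherits the sign change of $s(w_2)z^{|w_2|}P_{w_1}(z)$ across the simple real root $\lambda_1$, producing a real zero there by the intermediate value theorem and hence the lower bound. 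The main obstacle is precisely the uniformity of $C(R)$ in $w_2$: this is what permits a single $w_1$ to serve for all suffixes simultaneously, and it succeeds only because each elementary map $f_{i,z}$ contributes a bounded affine perturbation, keeping the coefficients of $R_{w_2}$ uniformly bounded regardless of how long $w_2$ may be.
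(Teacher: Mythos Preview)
The paper does not prove this proposition; it is quoted from \cite[Lemma~5.7, Remark~5.8]{BrayDavisLindseyWu} and used as a black box. So there is nothing in the present paper to compare against, and I assess your argument on its own merits.

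Your overall strategy and the key identity $P_{w_1w_2}(z)=s(w_2)\,z^{|w_2|}P_{w_1}(z)+R_{w_2}(z)$ with the uniform bound $|R_{w_2}(z)|\le C(R)\,R^{|w_2|}$ are correct, and this uniformity is indeed exactly what makes a single $w_1$ work for all suffixes. The genuine gap is in the Rouch\'e step. From ``all roots of $P_{w_1}$ lie in $\{|z|\le\lambda_1\}$'' you deduce that $|P_{w_1}(z)|$ grows on $|z|=R=\lambda_1+\epsilon/2$ at least like $(R/\lambda_1)^{|w_1|}$. This inference is false: if many roots $\rho_j$ cluster near $\lambda_1$, then on that circle $\prod_j|z-\rho_j|$ can be as small as $(R-\lambda_1)^{|w_1|}=(\epsilon/2)^{|w_1|}\to 0$. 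Root location alone gives no lower bound that improves with $|w_1|$, so you cannot ``choose $|w_1|$ large enough'' on this basis.

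The repair is already in your hands. Fix $R$ once and for all, independently of $n$ (say $R=\lambda+\epsilon/2$, using your chosen critically periodic $\lambda$ rather than the moving $\lambda_1$). The factorization $P_{w^n}(z)=P_w(z)\cdot(1+z^{|w|}+\dots+z^{(n-1)|w|})$ that you already invoke in the preliminary step gives $|P_{w^n}(z)|\to\infty$ uniformly on $|z|=R$ as $n\to\infty$, since $|P_w|$ is bounded below there and the second factor grows like $R^{(n-1)|w|}$. Then your own identity, applied with $(w^n,w')$ in place of $(w_1,w_2)$, yields $|P_{w_1}(z)|\ge R^{|w'|}\bigl(|P_{w^n}(z)|-C(R)\bigr)>C(R)$ for $n$ large, uniformly in $w'$. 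In short, run the bootstrapping from your preliminary step once more instead of switching to the root-location heuristic. A secondary point: your IVT argument for the lower bound assumes $\lambda_1$ is a \emph{simple} root of $P_{w_1}$, which is true (Perron--Frobenius for the primitive incidence matrix) but deserves a sentence of justification.
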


 We now use the above results to establish the following characterization of the sets $\X_\lambda\cap \overline{\mathbb{D}}$, which will be the starting point of our proof of Theorem \ref{t:improvedinsidecharacterization}.

\begin{theorem} \label{t:InsideParryConjugatesDontMatter}
 Fix  $1 < \lambda < 2$.  For each $\lambda' > \lambda$, define  $Y_{\lambda'}$ to be the closure of the set of roots in $\overline{\mathbb{D}}$ of all Parry polynomials $P_w$ such that $w$ is admissible and  $w^\infty \leq_E\It_{\lambda'}$, union with $S^1$, i.e. 
  $$Y_{\lambda'} := S^1 \cup \overline{ \{z \in \overline{\mathbb{D}} : P_w(z) = 0 \textrm{ for some admissible word } w \textrm{ such that } w^\infty \leq_E\It_{\lambda'}\} }.$$
  Then 
$$\X_{\lambda}\cap \overline{\mathbb{D}} = \bigcap_{\lambda' > \lambda} Y_{\lambda'}.$$
 

\end{theorem}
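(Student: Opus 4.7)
The plan is to prove the two set inclusions separately. The inclusion $\X_\lambda \cap \overline{\mathbb{D}} \subseteq \bigcap_{\lambda' > \lambda} Y_{\lambda'}$ is straightforward: by definition of the closure defining $\U$, any $(z, \lambda) \in \U$ with $|z| \leq 1$ is approximated by pairs $(z_n, \lambda_n)$ where $\lambda_n$ is the growth rate of some critically periodic tent map and $z_n$ is a Galois conjugate of $\lambda_n$. Critical periodicity gives $\It_{\lambda_n} = w_n^\infty$ for an admissible period word $w_n$, and $z_n$ is a root of $P_{w_n}$. For each fixed $\lambda' > \lambda$, $\lambda_n < \lambda'$ for all sufficiently large $n$, and Corollary \ref{cor:monotonicity} yields $w_n^\infty <_E \It_{\lambda'}$. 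Hence $z_n \in Y_{\lambda'}$ eventually (using $|z_n| < 1$ when $|z| < 1$, and $S^1 \subseteq Y_{\lambda'}$ when $|z| = 1$), so $z \in Y_{\lambda'}$ by closure.

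For the reverse inclusion, first consider $\lambda \in [\sqrt{2}, 2)$ and $|z| < 1$ (the case $z \in S^1$ is covered by Remark \ref{rem:holdsfortoplevel}). For each $n$, the hypothesis produces an admissible word $v_n$ with $v_n^\infty \leq_E \It_{\lambda + 2/n}$ and a root $\zeta_n \in \mathbb{D}$ of $P_{v_n}$ within distance $1/n$ of $z$. The crux of the argument is to modify $v_n$ into a word whose Parry polynomial factors as $(x-1)$ times an irreducible polynomial, so that its leading root is a genuine growth rate by Proposition \ref{prop:achievedasitinerary}. Applying Proposition \ref{lem:lead_root_approx} with $y = \lambda + 2/n$ and small tolerance yields a dominant word $w_{1,n} >_E 10 \cdot 1^{|w_{1,n}|-2}$ whose leading root exceeds $\lambda + 1/n$; choose $|w_{1,n}|$ large enough that some integer $m_n$ satisfies $m_n|v_n| < |w_{1,n}| < 2m_n|v_n|$ and $m_n$ exceeds the threshold in Proposition \ref{lem:root_approx} for $v_n$ with tolerance $1/n$. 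Proposition \ref{lem:concat} then provides a word $w'_n$ and integer $m'_n \geq m_n$ such that $P_{w_{1,n} w'_n v_n^{m'_n}}(x) = (x-1) Q_n(x)$ with $Q_n$ irreducible; Proposition \ref{prop:achievedasitinerary} identifies the leading root $\mu_n$ of this polynomial as an actual growth rate with $\It_{\mu_n} = (w_{1,n} w'_n v_n^{m'_n})^\infty$, and $\mu_n \to \lambda$ by Proposition \ref{lem:lead_root_approx}. Proposition \ref{lem:root_approx} delivers a root $z_n$ of the same polynomial within $1/n$ of $\zeta_n$; since $z_n \to z$ and $|z| < 1$, $z_n \neq 1$ eventually, so $z_n$ lies on the irreducible factor $Q_n$ and is therefore a Galois conjugate of $\mu_n$. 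Thus $(z_n, \mu_n) \to (z, \lambda)$ exhibits $(z, \lambda) \in \U$.

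For $\lambda \in (1, \sqrt{2})$, I reduce to the previous case via iterated renormalization. Let $k$ be the smallest integer with $\lambda^{2^k} \geq \sqrt{2}$. For $\lambda'$ sufficiently close to $\lambda$ from above, every admissible word $v$ with $v^\infty \leq_E \It_{\lambda'}$ is $k$-fold renormalizable by Remark \ref{rem:iter_renorm}; writing $v = \mathfrak{D}^k(\tilde v)$, Proposition \ref{pro:renorm1} implies each root $\zeta \in \mathbb{D} \setminus \{\pm 1\}$ of $P_v$ yields the root $\zeta^{2^k}$ of $P_{\tilde v}$, and Lemmas \ref{lem:DpreservesOrder}, \ref{lem:DpreservesIter} give $\tilde v$ admissible with $\tilde v^\infty \leq_E \It_{(\lambda')^{2^k}}$. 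Hence $z^{2^k} \in \bigcap_{\mu > \lambda^{2^k}} Y_\mu$, and the case already settled places $(z^{2^k}, \lambda^{2^k}) \in \U$ via approximating Parry polynomials $(x-1) Q_n(x)$ with $Q_n(x^{2^k})$ irreducible (the stronger conclusion of Proposition \ref{lem:concat}). The doubled words $\mathfrak{D}^k(\tilde v_n)$ produce growth rates $\nu_n = \mu_n^{1/2^k} \to \lambda$, and a continuous choice of $2^k$-th root of the approximating Galois conjugates $\zeta_n$ converges to $z$ and, by irreducibility of $Q_n(x^{2^k})$, lies in the Galois orbit of $\nu_n$.

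The principal obstacle is the parameter bookkeeping in the reverse inclusion: $w_{1,n}$ must simultaneously sit in the narrow length window of Proposition \ref{lem:concat} relative to $|v_n|$, have a leading root exceeding the growth rate implicit in $v_n$ so that $w_{1,n}^\infty >_E v_n^\infty$ strictly, and force $m_n$ past the threshold of Proposition \ref{lem:root_approx}. One must also verify that the root $z_n$ produced lies on the irreducible factor $Q_n$ rather than at $x=1$, which follows from $|z| < 1$ and $z_n \to z$.
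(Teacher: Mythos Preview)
Your argument is correct and uses exactly the same core machinery as the paper: Propositions~\ref{lem:concat}, \ref{lem:root_approx}, \ref{lem:lead_root_approx}, and \ref{prop:achievedasitinerary} for the construction of an admissible word whose Parry polynomial is $(x-1)$ times an irreducible factor, together with the renormalization step (Proposition~\ref{pro:renorm1}, Remark~\ref{rem:iter_renorm}, and the $Q(z^{2^k})$-irreducibility clause of Proposition~\ref{lem:concat}) for $\lambda<\sqrt{2}$.

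The only organizational difference is that the paper introduces the auxiliary sets $Z_{\lambda'}$ (closures of Galois conjugates of growth rates $\le\lambda'$) and invokes the Persistence Theorem (Theorem~\ref{thm:persist}) to identify $\bigcap_{\lambda'>\lambda}Z_{\lambda'}$ with $\X_\lambda\cap\overline{\mathbb{D}}$, whereas you arrange the approximating growth rates $\mu_n$ to converge to $\lambda$ directly by sliding the parameter $y$ in Proposition~\ref{lem:lead_root_approx} down to $\lambda$. Your packaging is slightly more economical in that it avoids an explicit appeal to Persistence.

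One small parameter slip: you take $y=\lambda+2/n$ and record only that the leading root of $P_{w_{1,n}\cdot(\text{anything})}$ exceeds $\lambda+1/n$, but the word $v_n$ you chose only satisfies $v_n^\infty\le_E\It_{\lambda+2/n}$, so ``leading root $>\lambda+1/n$'' is not enough to force the hypothesis $w_{1,n}^\infty>_E v_n^\infty$ of Proposition~\ref{lem:concat}. Choosing $y$ strictly above $\lambda+2/n$ (say $y=\lambda+3/n$ with tolerance $<1/n$) repairs this immediately; the paper handles the analogous point by fixing a single $\lambda'$ and placing the leading root in $[\lambda',\lambda'+\epsilon)$.
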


 \begin{remark}
The condition ``$w^\infty<_E\It_{\lambda'}$ for every $\lambda'>\lambda$'' is different from ``$w^{\infty} \leq \It_{\lambda}$'' because there could exist a symbolic coding for the itinerary of $1$ under the tent map $f_{\lambda}$ that is $>_E \It_{\lambda}$.  
\end{remark}

\begin{proof}  For any $1 < \lambda < 2$,
let 
$$\X'_\lambda = \bigcap_{\lambda' > \lambda} Y_{\lambda'}.$$
We will first prove $\X_{\lambda} \subseteq \X_{\lambda}'$. 
For any $\lambda'$, define the set $Z_{\lambda'}$ to be the closure of the set of Galois conjugates of critically periodic growth rates that are at most $\lambda'$, union with $S^1$.  By the Persistence Theorem, $\lambda_1 < \lambda_2$ implies $Z_{\lambda_1} \subseteq Z_{\lambda_2}$.  So if any point $x \in \bigcap_{\lambda' > \lambda} Z_{\lambda}$, then $x \in \X_{\lambda'}$ since $\Upsilon_2^{cp}$ is closed; similarly, if $x \not \in  \bigcap_{\lambda' > \lambda} Z_{\lambda}$, then $x \not \in \X_{\lambda}$.  Hence 
$$\X_{\lambda}\cap\overline{\mathbb{D}} = \bigcap_{\lambda' > \lambda} Z_{\lambda'}.$$
The conclusion will now follow from the statement that $Z_{\lambda'} \subseteq Y_{\lambda'}$ for all $\lambda'$.  If $z$ is a Galois conjugate of a critically periodic growth rate $\lambda''$ that is at most $\lambda'$, then $z$ is a root of the Parry polynomial $P_w$ such that $w^{\infty}=\It_{\lambda''}$, and $\It_{\lambda''} \leq_E \It_{\lambda'}$ by Corollary \ref{cor:monotonicity}.  Thus, $Z_{\lambda'} \subseteq Y_{\lambda'}$ for all $\lambda'$.


We will now prove $\X'_\lambda \subseteq \X_\lambda$.  To do this, it suffices to show $$Y_{\lambda'} \subseteq \bigcap_{\lambda'' > \lambda'} Z_{\lambda''}.$$
We first consider the case $\lambda'\geq \sqrt{2}$.
 Suppose $z$ is the root of some $P_w$, where $w$ is admissible and the leading root of $P_w$ is no larger than $\lambda'$.  ($Y_{\lambda'}$ is the closure of all such $z$'s).  For any $\epsilon>0$, Proposition \ref{lem:lead_root_approx} guarantees the existence of a  dominant word $w_1$ such that for any $w_2$, $P_{w_1w_2}$ is in $[\lambda', \lambda'+\epsilon)$ and $w_1>_E10\cdot 1^{|w_1|-2}$.  By monotonicity (Corollary \ref{cor:monotonicity}), $w_1^{\infty} >_E w^{\infty}$. 
Without loss of generality, we may choose $w_1$ so that its length, $|w_1|$, is arbitrarily big (this is because as we let $\epsilon \to 0$, we get arbitrarily many such dominant strings, and there are finitely many strings of at most any given length).   Thus we may assume that $w_1$ and $w$ satisfy the assumptions of Proposition~\ref{lem:concat} with the $m$ of Proposition~\ref{lem:concat} being arbitrarily large, and in particular, $m$ is $\geq$ the $N$ of Proposition ~\ref{lem:root_approx} using $w$ for $w_2$.  Let $w_3$ be the word constructed by Proposition~\ref{lem:concat}.   Because $w_3$ is admissible, has positive cumulative sign, and $P_{w_3}(z)/(z-1)$ is irreducible, $w_3^{\infty} = \It_{\lambda_3}$ for some $\lambda_3$ by Proposition \ref{prop:achievedasitinerary}.  We know $\lambda_3 \in [\lambda', \lambda'+\epsilon]$ because $w_3$ has the prefix $w_1$.  Also, any root of $P_{w_3}$ in $\mathbb{D}$ will be a Galois conjugate of $\lambda_3$, and by construction $P_{w_3}$ has a root close to $z$.  The containment now follows from letting $\epsilon\rightarrow 0$.

 

  Now we deal with the case $1<\lambda'<\sqrt{2}$. Let $k$ be the unique natural number such that $(\lambda')^{2^k}\in [\sqrt{2}, 2)$. Remark~\ref{rem:iter_renorm} implies that $w$ has a $k^{\textrm{th}}$ renormalization $w_0$, and $z^{2^k}$ is a root of $P_{w_0}$. Using $w_0$ in place of $w$ in the argument in the previous paragraph, we get a critically periodic growth rate $\lambda_4$ close to $(\lambda')^{2^k}$, such that one of its Galois conjugates $z_2$ is close to $z^{2^k}$. The conclusion in Proposition~\ref{lem:concat} further implies that any $(2^k)^{\textrm{th}}$ root of $z_2$ must be a Galois conjugate of the $(2^k)^{\textrm{th}}$ root of $\lambda_4$ as well, which implies that there is a Galois conjugate of $\lambda_4^{2^{-k}}$ which is close to $z$, which finishes the proof of the proposition.
\end{proof}

The following corollary is not used to prove any further results in the present work. 

\begin{corollary} \label{rem_irr_dense}
Let $V$ denote the set of all real numbers $\lambda \in (1,2)$ such that 
\begin{enumerate}
\item  the tent map $f_{\lambda}$ is critically periodic, 
\item there exists a word $w$ such that $\It_{\lambda} = w^{\infty}$,
\item the Parry polynomial $P_w(z)$ can be written as the product of an irreducible polynomial (in $\mathbb{Z}[z]$) and some cyclotomic polynomials.  
\end{enumerate}
Then $V$ is dense in $[1,2]$. 
\end{corollary}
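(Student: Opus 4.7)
The plan is to extract from the proof of Theorem~\ref{t:InsideParryConjugatesDontMatter} the explicit critically periodic growth rates constructed there and verify that they satisfy condition~(3) of the corollary. First I would establish density of $V$ in $[\sqrt{2}, 2)$. Given $y \in [\sqrt{2}, 2)$ and $\epsilon > 0$, Proposition~\ref{lem:lead_root_approx} yields a dominant word $w_1$ with $w_1 >_E 10 \cdot 1^{|w_1|-2}$ and length $|w_1|$ as large as desired, such that the leading root of $P_{w_1 w_2}$ lies within $\epsilon$ of $y$ for every word $w_2$. Fixing any admissible word $w_2$ and ensuring $|w_1|$ is sufficiently large, Proposition~\ref{lem:concat} produces a word $w'$ and an integer $m'$ for which $w_3 := w_1 w' w_2^{m'}$ is admissible and $P_{w_3}(z) = (z-1)\, Q(z)$, where $Q(z^{2^k})$ is irreducible in $\mathbb{Z}[z]$ for every $k \geq 0$. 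Proposition~\ref{prop:achievedasitinerary} then supplies $\lambda'' \in (1, 2]$ with $\It_{\lambda''} = w_3^\infty$, and because $w_3$ begins with $w_1$, $|\lambda'' - y| < \epsilon$. Since $(z-1) = \Phi_1(z)$ is cyclotomic and $Q$ is irreducible, the factorization $P_{w_3}(z) = \Phi_1(z) \cdot Q(z)$ witnesses $\lambda'' \in V$.

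Next I would use renormalization to handle $\lambda^* \in (1, \sqrt{2})$. Let $k \geq 1$ be the unique integer with $(\lambda^*)^{2^k} \in [\sqrt{2}, 2)$. Since the map $x \mapsto x^{2^{-k}}$ is continuous, it suffices to find $\mu \in V$ arbitrarily close to $(\lambda^*)^{2^k}$ and show that $\lambda^{**} := \mu^{2^{-k}} \in V$. Using Step~1, choose such a $\mu$ with $\It_\mu = w_3^\infty$ and $P_{w_3}(z) = (z-1)\, Q(z)$ as above. The renormalization equivalence recalled in Section~\ref{s:preliminaries} gives that $f_{\lambda^{**}}$ is critically periodic, and iterating Lemma~\ref{lem:DpreservesIter} yields $\It_{\lambda^{**}} = (\mathfrak{D}^k(w_3))^\infty$. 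Iterating Proposition~\ref{pro:renorm1} gives
\[
  P_{\mathfrak{D}^k(w_3)}(z) = \left(\prod_{i=0}^{k-1} \frac{z^{2^i}-1}{z^{2^i}+1}\right) P_{w_3}(z^{2^k}).
\]
Substituting $P_{w_3}(z) = (z-1)Q(z)$ and using the telescoping identity $\prod_{i=0}^{k-1}(z^{2^i}+1) = (z^{2^k}-1)/(z-1)$, this collapses to
\[
  P_{\mathfrak{D}^k(w_3)}(z) = (z-1) \prod_{i=0}^{k-1}(z^{2^i}-1) \cdot Q(z^{2^k}),
\]
a product of cyclotomic polynomials (each $z^{2^i}-1 = \prod_{j=0}^{i}\Phi_{2^j}(z)$) and the irreducible factor $Q(z^{2^k})$. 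Hence $\lambda^{**} \in V$, and density at the endpoints $1$, $\sqrt{2}$, and $2$ follows from density in adjacent subintervals.

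The main obstacle will be the polynomial bookkeeping in the renormalization step: the rational prefactor arising from iterating Proposition~\ref{pro:renorm1} is not a polynomial on its own, and one must use the $(z^{2^k}-1)$ factor inside $P_{w_3}(z^{2^k})$ to clear the denominator cleanly. The strong irreducibility conclusion of Proposition~\ref{lem:concat}, namely that $Q(z^{2^k})$ is irreducible for \emph{every} $k \geq 0$ rather than only $k=0$, is precisely what is needed to propagate condition~(3) through $k$-fold renormalization.
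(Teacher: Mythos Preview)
Your proposal is correct and follows exactly the approach the paper uses: the paper's proof simply asserts that the growth rates $\lambda_3$ (and their $2^{-k}$th roots) produced in the proof of Theorem~\ref{t:InsideParryConjugatesDontMatter} satisfy (1)--(3), and you have spelled out precisely why, including the polynomial bookkeeping for the renormalization step that the paper leaves implicit. One minor point: when you ``fix any admissible word $w_2$'' in Step~1 you should also arrange $w_2^\infty <_E w_1^\infty$ so that the hypothesis of Proposition~\ref{lem:concat} is met, but this is easily done (e.g.\ take $w_2$ with growth rate below $y$).
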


\begin{proof}
The growth rates $\lambda_3$, as well as the growth rates $\lambda_3^{2^{-k}}$, $k \in \mathbb{N}$, constructed in the  proof of Theorem \ref{t:InsideParryConjugatesDontMatter} all satisfy conditions (1)-(3). 
\end{proof}

\section{$\lambda$-suitability} \label{sec:lambdasuitability}

In this section, we establish some basic properties of $\lambda$-suitability and prove the technical lemmas about $\lambda$-suitability that we will need in Section  \ref{s:insidecylinder}.

For convenience, we reproduce the definition of $\lambda$-suitability here: 
  For $\lambda \in (1,2)$, a sequence $w$ is called \emph{$\lambda$-suitable} if for every  $\lambda'\in (\lambda, 2]$, the following conditions hold:
    \begin{enumerate}
\item    $\r ( \pr_n(w)) \leq_E \pr_n(\It_{\lambda'})$ for all $n \in \mathbb{N}$.
    \item   If $\r(\pr_n(w))  = \pr_n(\It_{\lambda'})$, then the cumulative sign $s(\pr_n(w)) = -1$.  
    \item  If $\It_{\lambda'} = 1 \cdot 0^k \cdot 1 \dots$,  $k \in \mathbb{N}$, then $w$ does not contain $k+1$ consecutive $0$s. \newline (That is, if $\It_{\lambda'}$ starts with $1$ followed by $k$ $0$s and then $1$, writing $w$ as $w=w_1w_2\ldots,$ there does not exist $n \in \mathbb{N}$ such that $w_i = 0$ for all $n \leq i \leq n+k$.) 
     \item If $n \in \mathbb{N}$ satisfies $\sqrt{2} \leq_E (\lambda')^{2^n} < 2$, then $w = \mathfrak{D}'^n(w')$ for some sequence $w'$, where $\mathfrak{D}'$ is the map that replaces $0$ with $11$ and $1$ with $01$.  
    Furthermore, if $$\It_{\lambda'^{2^n}} = 1\cdot 0^k \cdot1\ldots,$$ then $w'$ does not contain $k+1$ consecutive $0$s.

     \end{enumerate} 
    \medskip

The  intuition behind the definition of $\lambda$-suitability is that we need a condition on sequences $w$ so that Lemma \ref{lem:main} works.

\begin{remark}
An immediate consequence of monotonicity (Corollary \ref{cor:monotonicity}) is that if $\lambda'$ satisfies conditions (1)-(4) of Definition \ref{def:improvedlambdasuitability} for a sequence $w$, then so does every $\lambda'' > \lambda'$.
\end{remark}

\begin{remark}
Every itinerary $\It_{\lambda'}$ is admissible (by Theorem \ref{th:realizableadmissible}), so the admissibility condition implies that if $\It_{\lambda'} = 1 \cdot 0^k \cdot 1 \dots$, then $\It_{\lambda}$ does not contain $k+1$ consecutive $0$s.  
\end{remark}

\begin{remark} Note that the map $\frak{D}'$ defined in the definition of $\lambda$-suitability is related to the doubling map $\frak{D}$ by 
$$\r \circ \pr_{2n} \circ \frak{D} = \frak{D}' \circ \r  \circ \pr_n (w)$$
for every sequence $w$ and $n \in \mathbb{N}$.
\end{remark}

\begin{lemma} \label{lem:lambdasuitableseqsclosed}
The set of $\lambda$-suitable sequences is closed. 
\end{lemma}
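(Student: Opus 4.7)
The plan is to equip the space $\{0,1\}^{\mathbb{N}}$ with its natural product topology, under which $w^{(m)} \to w$ means that for each $n$, $\pr_n(w^{(m)}) = \pr_n(w)$ for all sufficiently large $m$, and then to show that the set of $\lambda$-suitable sequences is an intersection of closed subsets. Since an arbitrary intersection of closed sets is closed, this yields the conclusion.

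For fixed $\lambda' \in (\lambda, 2]$, I would verify that each of the four conditions in Definition~\ref{def:improvedlambdasuitability} cuts out a closed set. Conditions (1) and (2) are for each $n$ determined entirely by $\pr_n(w)$, so they define cylinder (clopen) sets, and their full forms are intersections over $n$ of such sets, hence closed. Condition (3) is the complement of the assertion ``$w$ contains $k+1$ consecutive $0$s somewhere,'' which is a countable union of cylinder sets and hence open, so the condition itself is closed. For condition (4), the key observation is that $\mathfrak{D}'$ is injective: in any sequence in the image of $\mathfrak{D}'$ the second coordinate of each length-two block must equal $1$, so one can read off the unique preimage letter from each block. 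Consequently ``$w = \mathfrak{D}'^n(w')$ for some $w'$'' is itself a cylinder-type condition on $w$, and the recovery map $w \mapsto w'$ on this closed set is continuous; the ``avoids $0^{k+1}$'' condition on $w'$ then pulls back to a closed condition on $w$.

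Intersecting over all $\lambda' \in (\lambda, 2]$ (and over all the auxiliary indices $n$ and $k$) finally presents the set of $\lambda$-suitable sequences as an intersection of closed subsets of $\{0,1\}^{\mathbb{N}}$, which is closed.  I foresee no substantial obstacle: every defining clause is essentially a ``finite-window'' condition on the coordinates of $w$, with the only mild subtlety being the translation of clause (4) from an ``exists $w'$'' statement into an explicit coordinate condition on $w$ via the injectivity and continuous invertibility of $\mathfrak{D}'$ on its image.
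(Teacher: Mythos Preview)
Your argument is correct and follows essentially the same approach as the paper: both proofs rest on the observation that each clause in Definition~\ref{def:improvedlambdasuitability} is a ``finite-window'' condition on the coordinates of $w$, with the paper phrasing this as ``the complement is open because a failure of any clause is witnessed by a finite prefix'' and you phrasing it dually as ``each clause cuts out a closed (cylinder-type) set.'' Your treatment of condition~(4) via the injectivity of $\mathfrak{D}'$ is more explicit than the paper's, which simply says ``Condition~(4) is similar.''
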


\begin{proof}
We will show that the set of all sequences that are not $\lambda$-suitable is open.   To do this, it suffices to show that given any sequence $w$ which is not $\lambda$-suitable, we can find a prefix of $w$ such that every sequence that shares this prefix is not $\lambda$-suitable. 
It is clear that conditions (1) and (2) are closed conditions.  For condition (3), we choose the prefix to be one that contains the first $k+1$ consecutive $0$s. Condition (4) is similar.  
\end{proof}

The following lemma is immediate because the definition of $\lambda$-suitability is of the form ``for all $\lambda' > \lambda$, $P(w,\lambda')$,'' where $P$ is a predicate.  

\begin{lemma}\label{lem:semicont}
  Let $\mathcal{M}_\lambda$ denote the set of $\lambda$-suitable sequences. Then 
$$\mathcal{M}_\lambda=\bigcap_{\lambda'' \in (\lambda,2]}\mathcal{M}_{\lambda''}.$$ \qed
\end{lemma}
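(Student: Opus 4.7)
The plan is to unpack the quantifier structure implicit in Definition \ref{def:improvedlambdasuitability} and prove both inclusions directly. Let $P(w,\lambda')$ denote the conjunction of conditions (1)--(4) applied to the pair $(w,\lambda')$. Then by definition, $w \in \mathcal{M}_\mu$ if and only if $P(w,\lambda')$ holds for every $\lambda' \in (\mu, 2]$.

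For the forward inclusion $\mathcal{M}_\lambda \subseteq \bigcap_{\lambda''\in(\lambda,2]}\mathcal{M}_{\lambda''}$, I would take $w \in \mathcal{M}_\lambda$ and an arbitrary $\lambda'' \in (\lambda,2]$. To verify $w \in \mathcal{M}_{\lambda''}$, one must check $P(w,\lambda')$ for all $\lambda' \in (\lambda'',2]$; but $(\lambda'',2] \subseteq (\lambda,2]$, so this is immediate from $w \in \mathcal{M}_\lambda$.

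For the reverse inclusion, I would suppose $w \in \mathcal{M}_{\lambda''}$ for every $\lambda'' \in (\lambda,2]$ and verify that $P(w,\lambda')$ holds for each $\lambda' \in (\lambda,2]$. Given such a $\lambda'$, the key (small) step is to choose an intermediate value $\lambda'' \in (\lambda,\lambda')$, which exists since $\lambda < \lambda' \leq 2$; for instance $\lambda'' = (\lambda+\lambda')/2$ suffices. Then $\lambda' \in (\lambda'',2]$, and the assumption $w \in \mathcal{M}_{\lambda''}$ yields $P(w,\lambda')$. Ranging over $\lambda' \in (\lambda,2]$ then gives $w \in \mathcal{M}_\lambda$.

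There is essentially no obstacle: the only substantive point is the density of $(\lambda,2]$ used to slip an auxiliary $\lambda''$ strictly between $\lambda$ and a given $\lambda'$, which is what allows one to convert the ``strict'' quantifier over $\lambda'' > \lambda$ in the intersection into the ``strict'' quantifier over $\lambda' > \lambda$ in the definition of $\mathcal{M}_\lambda$. The edge case $\lambda = 2$ is handled vacuously since both $(\lambda,2]$ and the indexing set are empty, making $\mathcal{M}_2$ equal to the set of all sequences and the intersection equal to the empty intersection, which is also the set of all sequences.
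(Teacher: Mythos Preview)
Your argument is correct and is exactly the quantifier-unpacking the paper has in mind: the paper declares the lemma ``immediate because the definition of $\lambda$-suitability is of the form `for all $\lambda' > \lambda$, $P(w,\lambda')$'\,'' and gives no further proof, so you have simply written out the one-line observation in full detail. The only step with any content is the one you flag---choosing an intermediate $\lambda''\in(\lambda,\lambda')$ for the reverse inclusion---and that is precisely why the paper's ``for all $\lambda'>\lambda$'' formulation makes the lemma automatic.
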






\begin{lemma} \label{lem:reverseislambdasuitable}
If $w$ is an admissible word that satisfies $w^{\infty} \leq_E \It_{\lambda}$ for $\lambda \in (1,2)$, then $(\r(w))^{\infty}$ is $\lambda$-suitable. 
\end{lemma}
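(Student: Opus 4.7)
The plan is to verify the four conditions of $\lambda$-suitability for $v := (\r(w))^\infty$ for each $\lambda' \in (\lambda, 2]$. Setting $n := |w|$ and writing $m = qn + r$ with $0 \leq r < n$, a direct calculation gives the identity
\[ \r(\pr_m(v)) \;=\; \suf_r(w)\cdot w^q \;=\; \pr_m\bigl(\sigma^{n-r}(w^\infty)\bigr), \]
where $\sigma^{n-r}(w^\infty) = \suf_r(w)\cdot w^\infty$; moreover $s(\pr_m(v)) = s(\suf_r(w))$ because $s(w) = +1$ by admissibility. These identities let me recast each of the four conditions as a comparison between $\sigma^{n-r}(w^\infty)$ and $\It_{\lambda'}$.

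For condition (1), I would chain $\sigma^{n-r}(w^\infty) \leq_E w^\infty$ (admissibility of $w$), $w^\infty \leq_E \It_\lambda$ (hypothesis), and $\It_\lambda <_E \It_{\lambda'}$ (Corollary~\ref{cor:monotonicity}), and pass to length-$m$ prefixes. For condition (3), I would use that reversal preserves $0$-runs, so the maximal $0$-run in $v$ equals that in $w^\infty$; admissibility of $w$ bounds this by the initial $0$-run $a_w$ of $w^\infty$ (otherwise some shift of $w^\infty$ would exceed $w^\infty$ in $\leq_E$), and the chain $w^\infty \leq_E \It_\lambda \leq_E \It_{\lambda'}$ yields $a_w \leq a_{\lambda'}$. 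For condition (4), relevant only when $\lambda < \sqrt{2}$, Proposition~\ref{pro:renorm2} combined with Lemmas~\ref{lem:DpreservesOrder} and \ref{lem:DpreservesIter} produces an admissible renormalization $w_0$ of $w$ with $w_0^\infty \leq_E \It_{\lambda^2}$; the identity $\r \circ \mathfrak{D} = \mathfrak{D}' \circ \r$ then yields $v = \mathfrak{D}'(v_0)$ with $v_0 = (\r(w_0))^\infty$, and I would iterate this reduction until $\lambda^{2^k} \geq \sqrt{2}$, at which point the required property of $v_0$ reduces to condition (3) at level $\lambda^{2^k}$.

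The main obstacle is condition (2): given $\pr_m(\sigma^{n-r}(w^\infty)) = \pr_m(\It_{\lambda'})$, I must show $s(\suf_r(w)) = -1$. Sandwiching the three inequalities used for condition (1) at the prefix level forces $\pr_m(w^\infty) = \pr_m(\It_{\lambda'}) = \pr_m(\sigma^{n-r}(w^\infty))$; in particular, the first disagreement between $\sigma^{n-r}(w^\infty)$ and $w^\infty$ occurs at some position strictly greater than $m$. Primitivity and admissibility of $w$ confine this first disagreement to within one period, forcing $m < n$, so that $q = 0$, $r = m$, and $\suf_m(w) = \pr_m(w) = \pr_m(\It_{\lambda'})$; that is, $\pr_m(w)$ is a non-trivial border of $w$ which also begins $\It_{\lambda'}$. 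The remaining combinatorial task --- showing that any such border has cumulative sign $-1$ --- I would establish by invoking Lemma~\ref{lem:main} (the key combinatorial result of this section) together with admissibility of $\It_{\lambda'}$: if instead $s(\pr_m(w)) = +1$, the monotonicity-prescribed direction in which $\It_{\lambda'}$ diverges from $w^\infty$ beyond position $m$ would be incompatible with the admissibility inequality $\sigma^k(\It_{\lambda'}) \leq_E \It_{\lambda'}$ for a suitable shift~$k$.
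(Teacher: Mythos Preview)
Your treatment of conditions (1), (3), and (4) matches the paper's and is fine. The genuine gap is in condition (2).

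First, you invoke primitivity of $w$, which is not assumed; passing to a primitive root can destroy the positive cumulative sign (consider $w = 1010$), so this reduction would need justification you do not supply. More seriously, your argument ``first disagreement within one period, hence $m < n$'' collapses when $r = 0$: then $\sigma^{n-r}(w^\infty) = w^\infty$ and there is no disagreement at all. In that case $m = qn \geq n$ and $s(\pr_m(v)) = s(w)^q = +1$, so this is precisely a situation you must rule out rather than sidestep.

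Second, the appeal to Lemma~\ref{lem:main} does not do what you need. That lemma takes as \emph{input} a prefix of a $\lambda$-suitable sequence --- exactly what you are trying to establish for $v$ --- and outputs admissibility of a concatenation $w_0 \cdot \r(\alpha)$; it carries no sign information about borders of $w$.

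The paper's route avoids any split into $m < n$ versus $m \geq n$. Assume for contradiction that the common $m$-prefix has positive sign. From $\sigma^{n-r}(w^\infty) \leq_E w^\infty$ (admissibility) and the shared positive-sign $m$-prefix, apply $\sigma^m$ to both sides; since $m + (n-r) = (q+1)n$ is a multiple of $|w|$, this yields $w^\infty \leq_E \sigma^m(w^\infty)$, and admissibility then forces equality, i.e.\ $w^\infty = (\pr_m(\It_{\lambda'}))^\infty$. Now let $j > m$ be the first index where $w^\infty$ and $\It_{\lambda'}$ differ, pick the largest multiple $km < j$, strip the common positive-sign $km$-prefix, and read off $\pr_m(\It_{\lambda'}) <_E \pr_m(\sigma^{km}(\It_{\lambda'}))$, contradicting admissibility of $\It_{\lambda'}$. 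Your final sentence points toward this last contradiction, but the crucial intermediate step --- using the positive-sign hypothesis to force $w^\infty$ to have period $m$ --- is absent from your outline, and without it the argument does not close.
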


\begin{proof}
Observe that for any $n \in \mathbb{N}$
\begin{equation} \label{eq:somek}
\r ( \pr_n(\r(w)^{\infty})) = \pr_n(\sigma^{k}(w^{\infty}))
\end{equation} for some $k \in \mathbb{N}$, where $n+k$ is some multiple of $|w|$.  Since $w$ is admissible, $\pr_n(\sigma^k(w^{\infty}) \leq_E \pr_n(w^{\infty})$ for all $k,n \in \mathbb{N}$.  By Corollary \ref{cor:monotonicity}, for any $\lambda' > \lambda$, 
\begin{equation} \label{eq:lambdasstrictineq}
\It_{\lambda} <_E \It_{\lambda'}.
\end{equation}
We thus have that for any $n \in \mathbb{N}$, 
\begin{multline*}
\r ( \pr_n(\r(w)^{\infty})) = \pr_n(\sigma^{k}(w^{\infty})) \leq_E \pr_n(w^{\infty}) \\
 \leq_E  \pr_n(\It_{\lambda})  \leq_E \pr_n(\It_{\lambda'}),
\end{multline*}
which is condition (1) of the definition of $\lambda$-suitability. 

Now suppose that for some $\lambda' > \lambda$, 
$$\r ( \pr_n(\r(w)^{\infty}) = \pr_n(\It_{\lambda'})$$ and $\pr_n(\It_{\lambda'})$ has positive cumulative sign.  
Then from \eqref{eq:somek} we have $$\pr_n(\sigma^{k}(w^{\infty})) = \pr_n(\It_{\lambda'}).$$ 
Admissibility of $w$ and \eqref{eq:lambdasstrictineq} together imply that 
\begin{equation} \label{eq:gettingcommonprefix}
\sigma^{k}(w^{\infty})\leq_E w^{\infty}<_E\It_{\lambda'}.
\end{equation} 
Because $\pr_n(\It_{\lambda'})$ is the common prefix of $\sigma^{k}(w^{\infty})$ and $\It_{\lambda'}$,  \eqref{eq:gettingcommonprefix} implies it must also be a prefix of $w^\infty$. 
Removing this common $n$-prefix with positive cumulative sign from both sides of the inequality (by applying $\sigma^n$) yields 
$$w^\infty\leq_E\sigma^{n}(w^\infty).$$ However, admissibility also implies that $\sigma^{n}(w^\infty)\leq w^\infty$, so in fact 
$$w^\infty = \sigma^{n}(w^\infty).$$
Therefore 
\begin{equation} \label{eq:repeatedprefix}
w^\infty=(\pr_n(\It_{\lambda'}))^{\infty}.
\end{equation}

Let $j$ be the index of the first place $w^{\infty}$ differs from $\It_{\lambda'}$. Clearly, $j>n$. Pick $m \in \mathbb{N}$ such that $mn<j\leq (m+1)n$. Then, after removing the common prefix of length $mn$ and positive cumulative sign from both $w^{\infty}$ and $\It_{\lambda'}$, we get from \eqref{eq:repeatedprefix} and \eqref{eq:lambdasstrictineq} that
$$\sigma^{mn}(w^{\infty}) = w^{\infty}<_E \It_{\lambda'},$$
and hence
$$\pr_n(\It_{\lambda'})=\pr_n(w^\infty)<_E\pr_n(\sigma^{mn}(\It_{\lambda'})),$$ which contradicts with the fact that $\It_{\lambda'}$ is admissible (by Theorem \ref{th:realizableadmissible}). Thus, condition (2) of the definition of $\lambda$-suitability holds.

Now condition (3) of the definition of $\lambda$-suitability follows from the assumption that $w^{\infty} \leq_E \It_{\lambda}$. 

For condition (4), suppose for some $\lambda'>\lambda$, $\sqrt{2}\leq (\lambda')^{2^n}<2$. Then $\lambda^{2^n}<2$, so by Lemma~\ref{pro:renorm2}, $w=\mathfrak{D}^n(w')$ for some $w'$. Hence, 
$$(\r(w))^\infty=(\mathfrak{D}'^n(\r(w'))^\infty.$$ Because $\mathfrak{D}$ preserves $\leq_E$ and sends itineraries to itineraries (Lemma \ref{lem:DpreservesIter}), the number of consecutive $0$s in $(\r(w'))^\infty$, which is the number of consecutive $0$s in $w'^\infty$, can not be more than the number of consecutive $0$s in $\It_{\lambda'^{2^n}}$.


\end{proof}

The key combinatorial result we need to prove Theorem~\ref{t:improvedinsidecharacterization} is the following:

\begin{lemma}\label{lem:main} Fix $\lambda \in [1,2)$ and let $w_0$ be a finite dominant word such that $\It_{\lambda'}\leq_E w_0^\infty$ for some $\lambda'>\lambda$.  Let $\alpha$ be a word such that $\alpha$
\begin{enumerate}
\item  ends with $1$,
\item  is a prefix of some $\lambda$-suitable sequence,
\item has positive cumulative sign, and 
\item $|w_0|>|\alpha|$. 
\end{enumerate}
Then the word $w_0\cdot \r(\alpha)$ is admissible.
\end{lemma}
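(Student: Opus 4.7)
The plan is to verify admissibility of $W := w_0 \cdot \r(\alpha)$ directly. That $W$ starts with $10$ and has positive cumulative sign is immediate from $w_0$ being dominant together with $s(\alpha) = s(\r(\alpha)) = +1$. The substantive content is to show $\sigma^k(W^\infty) \leq_E W^\infty$ for every $k \in \{1,\ldots,|W|-1\}$. I will split the range of $k$ into three cases according to where the shift lands: Case A, $1 \leq k \leq |w_0|-1$ (inside $w_0$); Case B, $k = |w_0|$ (the boundary); and Case C, $|w_0| < k < |W|$ (inside $\r(\alpha)$).

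For Case A, setting $j = |w_0|-k$, the first $j+1$ symbols of $\sigma^k(W^\infty)$ are $\suf_j(w_0)$ followed by the leading letter of $\r(\alpha)$; the hypothesis that $\alpha$ ends in $1$ makes this leading letter equal to $1$, so those $j+1$ symbols are precisely $\suf_j(w_0)\cdot 1$, which dominance of $w_0$ puts strictly below $\pr_{j+1}(w_0)$. For Case B we compare $\r(\alpha)$ with $\pr_{|\alpha|}(w_0)$. Condition (1) of $\lambda$-suitability and the hypothesis $\It_{\lambda'}\leq_E w_0^\infty$ give the chain $\r(\alpha)\leq_E\pr_{|\alpha|}(\It_{\lambda'})\leq_E\pr_{|\alpha|}(w_0)$. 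Equality across would force $\r(\alpha) = \pr_{|\alpha|}(\It_{\lambda'})$, which by condition (2) would require $s(\alpha) = -1$, contradicting the positive cumulative-sign hypothesis on $\alpha$.

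The delicate case is Case C. Writing $k = |w_0|+i$ and $m = |\alpha|-i \in [1,|\alpha|-1]$, the first $m$ symbols of $\sigma^k(W^\infty)$ are $\r(\pr_m(\alpha))$, and conditions (1)--(2) of $\lambda$-suitability again yield $\r(\pr_m(\alpha)) \leq_E \pr_m(w_0)$. If this is strict, we are done. The remaining subcase is equality $\r(\pr_m(\alpha)) = \pr_m(\It_{\lambda'}) = \pr_m(w_0) =: \pi$, where condition (2) now forces $s(\pi) = -1$. Here the decisive observation is that
\[\sigma^k(W^\infty) = \pi \cdot W^\infty, \qquad W^\infty = \pi \cdot \sigma^m(W^\infty).\]
Stripping the common prefix $\pi$ and using that the negative cumulative sign of $\pi$ reverses the direction of twisted-lex comparison, the inequality $\sigma^k(W^\infty)\leq_E W^\infty$ becomes equivalent to $\sigma^m(W^\infty) \leq_E W^\infty$. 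Since $m \leq |\alpha|-1 < |w_0|$, this reduced comparison is an instance of Case A, already established.

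The main obstacle is exactly this equality subcase of Case C: it is the only place where dominance and $\lambda$-suitability must be combined rather than used separately, and the combination is effected by the sign-flip reduction. The one nonobvious ingredient is the identity that prepending a prefix with $s = -1$ swaps the direction of twisted-lex comparison, which follows by unwinding the definition of $<_E$. The hypothesis $|w_0| > |\alpha|$ is used throughout to guarantee that prefix comparisons against $w_0$ are well-defined and that the reduced shift $m$ in Case C always lands back in Case A's range.
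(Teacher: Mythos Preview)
Your proof is correct and follows essentially the same approach as the paper's. The paper combines your Cases B and C into a single ``Case 2'' ($|w_0|\leq k<|W|$), which it then subdivides according to whether the first disagreement occurs within the remaining tail of $\r(\alpha)$ (their Case 2A, your strict subcase) or after it (their Case 2B, your equality subcase with the sign-flip reduction to Case A); your separate handling of the boundary $k=|w_0|$ simply makes explicit that Case 2B is vacuous there because $s(\alpha)=+1$.
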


\begin{proof}
  Let $\alpha'=\r(\alpha)$. It suffices to show that the admissibility criterion 
  \[\sigma^k((w_0\alpha')^\infty)\leq_E (w_0\alpha')^\infty\]
   holds for all $1\leq k< |\alpha|+|w_0|$.

Case 1: $k<|w_0|$. This implies that the comparison between $\sigma^k((w_0\alpha')^\infty)$ and $(w_0\alpha')^\infty$ is equivalent to the comparison of a proper suffix of $w_0$ concatenated with $1$ with a prefix of $w_0$ of the same length.  Hence 
  \[\sigma^k((w_0\alpha')^\infty)\leq_E (w_0\alpha')^\infty\] because $w_0$ is dominant.
  
 Case 2: $|w_0|\leq k < |\alpha|+|w_0|$. Suppose the first place $\sigma^k((w_0\alpha')^\infty)$ and $(w_0\alpha')^\infty$ differ is at the $j^{\textrm{th}}$ position. It is evident that $1\leq j\leq |\alpha|+|w_0|$. We divide this into two subcases:
  
    \begin{itemize}
      \item Case 2A: $j\leq |w_0|+|\alpha|-k$. The fact that $|w_0|>|\alpha|$ and $k\geq |w_0|$ implies that $j\leq |w_0|$. Hence, the comparison between $\sigma^k((w_0\alpha')^\infty)$ and $(w_0\alpha')^\infty$ is equivalent to the comparison of  a proper suffix of $\alpha'$ with a prefix of $w_0$.  Hence, item (1) of Definition~\ref{def:improvedlambdasuitability} gives us 
      $$\sigma^k\left((w_0\alpha')^\infty\right)\leq_E (w_0\alpha')^\infty.$$
       \medskip
      
     \item Case 2B: $j>|w_0|+|\alpha|-k$.  The word $\beta :=\suf_{|w_0|+|\alpha|-k}(\alpha')$, which is a common prefix of $\sigma^k((w_0\alpha')^\infty)$ and $(w_0\alpha')^\infty$, is identical to a prefix of $w_0$, which is $\ge_E$ than a prefix of $\It_{\lambda'}$ for some $\lambda'>\lambda$. Hence, due to item (2) of Definition~\ref{def:improvedlambdasuitability}, $\beta$ has negative cumulative sign. Now, using the conclusion of Case 1, we have:
\[\sigma_k^{|w_0|+|\alpha|-k}(\sigma^k((w_0\alpha')^\infty))=(w_0\alpha')^\infty>_E \sigma_k^{|w_0|+|\alpha|-k}((w_0\alpha')^\infty)\]
Hence, 
$$\sigma^k((w_0\alpha')^\infty)\leq_E (w_0\alpha')^\infty$$ because $\beta$ has negative cumulative sign.
    \end{itemize}
\end{proof}

\begin{lemma} \label{lem:suitabilityrenormalization} Let $w$ and $w'$ be sequences, and let$\lambda \in (1,2)$ and $k \in \mathbb{N}$ satisfy $\sqrt{2} \leq \lambda^{2^k} < 2$.  If $w$ is $\lambda$-suitable  and $w = \mathfrak{D'}^k(w')$, then $w'$ is $\lambda^{2^k}$-suitable.
\end{lemma}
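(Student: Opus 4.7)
The plan is to fix an arbitrary $\mu \in (\lambda^{2^k}, 2]$ and verify each of the four conditions of $\mu$-suitability for $w'$ by invoking the corresponding condition of $\lambda$-suitability for $w$ with the specific parameter $\lambda' := \mu^{1/2^k}$, which lies in $(\lambda, 2]$. The whole argument rests on three bookkeeping identities I would set up first: (a) on words, $\r \circ \mathfrak{D}' = \mathfrak{D} \circ \r$, and hence $\r \circ \mathfrak{D}'^k = \mathfrak{D}^k \circ \r$ (a direct verification from the letter-replacement rules $\mathfrak{D}':0\mapsto 11, 1\mapsto 01$ versus $\mathfrak{D}:0\mapsto 11, 1\mapsto 10$); (b) $\pr_{2^k n}(\mathfrak{D}'^k(w')) = \mathfrak{D}'^k(\pr_n(w'))$ since $\mathfrak{D}'$ acts letter-by-letter; and (c) by Lemmas \ref{lem:DpreservesOrder} and \ref{lem:DpreservesIter}, $\mathfrak{D}^k$ is injective and $\leq_E$-preserving, $\mathfrak{D}^k(\It_\mu) = \It_{\lambda'}$, and both $\mathfrak{D}$ and $\mathfrak{D}'$ preserve the cumulative sign.

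With (a)--(c) in hand, conditions (1) and (2) follow almost automatically. Applying condition (1) of $\lambda$-suitability for $w$ at length $2^k n$ and rewriting via (a)--(c) gives $\mathfrak{D}^k(\r(\pr_n(w'))) \leq_E \mathfrak{D}^k(\pr_n(\It_\mu))$, which collapses to $\r(\pr_n(w')) \leq_E \pr_n(\It_\mu)$ by injectivity and order-preservation of $\mathfrak{D}^k$. If equality holds for $w'$ at length $n$, the same calculation lifts it to equality for $w$ at length $2^k n$; condition (2) for $w$, combined with sign-preservation of $\mathfrak{D}'$, then yields $s(\pr_n(w')) = s(\pr_{2^k n}(w)) = -1$.

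Conditions (3) and (4) both reduce to condition (4) of $\lambda$-suitability for $w$ applied at the right renormalization index, using injectivity of $\mathfrak{D}'^k$. For condition (3): if $\It_\mu = 1\cdot 0^m\cdot 1\ldots$ for some finite $m$, then $\mu < 2$ (since $\It_2 = 10^\infty$), and $\mu > \lambda^{2^k} \geq \sqrt{2}$; so $\sqrt{2} \leq (\lambda')^{2^k} = \mu < 2$, and condition (4) for $w$ at index $k$ produces some $w''$ with $w = \mathfrak{D}'^k(w'')$ and no $m+1$ consecutive $0$s, and injectivity forces $w'' = w'$. For condition (4) at index $n$ for $w'$, the hypothesis $\sqrt{2} \leq \mu^{2^n} < 2$ rewrites as $\sqrt{2} \leq (\lambda')^{2^{n+k}} < 2$, so condition (4) for $w$ at index $n+k$ yields $w = \mathfrak{D}'^{n+k}(v)$ with the appropriate $0$-run bound on $v$; injectivity of $\mathfrak{D}'^k$ then gives $w' = \mathfrak{D}'^n(v)$.

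I expect the only real subtlety to be the boundary case $\mu = 2$ in condition (3), where condition (4) of $\lambda$-suitability cannot be applied directly (it demands strict inequality $(\lambda')^{2^k} < 2$); this is handled by the observation that $\It_2 = 1\cdot 0^\infty$ never has the form $1\cdot 0^m\cdot 1\ldots$ for any finite $m$, so the hypothesis of condition (3) is vacuous. Apart from this and the careful index-shifting between the renormalization depth $k$ along which we are pulling back and the index $n$ internal to condition (4), the proof is a routine ``cancel $\mathfrak{D}'^k$'' computation.
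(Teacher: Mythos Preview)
Your proposal is correct and follows essentially the same approach as the paper: both arguments rest on the identity $\r\circ\mathfrak{D}'=\mathfrak{D}\circ\r$, the order- and sign-preservation of $\mathfrak{D}$ (Lemma~\ref{lem:DpreservesOrder}), and the itinerary relation $\mathfrak{D}^k(\It_{\mu})=\It_{\mu^{1/2^k}}$ (Lemma~\ref{lem:DpreservesIter}) to pull conditions (1)--(2) back through $\mathfrak{D}'^k$, while conditions (3)--(4) for $w'$ are deduced from condition (4) for $w$ via injectivity of $\mathfrak{D}'$. The only organizational difference is that the paper inducts on $k$ to reduce to the case $k=1$ (which then forces a small case split in verifying condition (3), according to whether $\lambda^2\geq\sqrt{2}$ or not, since the inductive step must apply even before one reaches the level where the $\sqrt{2}$ hypothesis kicks in), whereas you handle general $k$ directly and use the standing hypothesis $\lambda^{2^k}\geq\sqrt{2}$ to avoid that split; your explicit treatment of the boundary case $\mu=2$ in condition (3) is a nice touch that the paper leaves implicit.
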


\begin{proof}
  By induction we only need to prove it for $k=1$.  Assume $w=\mathfrak{D'}(w')$ is $\lambda$-suitable, we will now show that $w'$ satisfies (1)-(4) of Definition~\ref{def:improvedlambdasuitability}. By definition, 
  $$\r(\mathfrak{D'}(v))=\mathfrak{D}(\r(v))$$ for any word $v$, so for any $\lambda'>\lambda$.
  \[\r(\pr_{2n}(\mathfrak{D'}(w'))=\r(\mathfrak{D'}(\pr_n(w')))\]
  \[=\mathfrak{D}(\r(\pr_n(w')))\leq \pr_{2n}(\It_{\lambda'})=\mathfrak{D}(\pr_n(\It_{\lambda'^2}))\]
  Hence (1) is true for $w'$ because of Lemma~\ref{lem:DpreservesOrder} and \ref{lem:DpreservesIter}. Condition (2) of Definition~\ref{def:improvedlambdasuitability} can be verified similarly. It is easy to see that $w$ satisfies (4) implies that $w'$ satisfies (4). Lastly, we will now show that $w$ satisfies (4) will imply $w'$ satisfies (3): if $\lambda^2\geq \sqrt{2}$, this follows from the statement of (4). If $\lambda^2<\sqrt{2}$, (4) implies that $w'=\mathfrak{D'}(w'')$ for some $w''$, which implies that $w'$ can never have more than one consecutive 0, hence it also satisfies (3).
\end{proof}


\section{Characterization inside the unit cylinder} \label{s:insidecylinder}

\begin{lemma} \label{lem:compactness}
Let $\mathcal{K}$ denote the space of compact subsets of $\mathbb{R}^3$ with the Hausdorff metric topology.  Given any compact subset $K$ of $\mathcal{K}$, the union of the elements of $K$ is a compact subset of $\mathbb{R}^3$. 

\begin{proof}
First, we claim there exists $R >0$ such that $k \subset \overline{B_R(0)}$ for all $k \in K$.  If this was not the case, then there exist $k_1$ and $k_2$ in $K$ such that $d_{\textrm{Haus}}(k_1,k_2)$ is arbitrarily large, contradicting the fact that $K$ is compact.  Thus the claim is true. 

Consider $K \times \overline{B_R(0)}$.  As a product of compact sets, it is compact.  Consider the subset $C \subseteq K$ such that $C$ consists of all pairs $(k,x)$ such that $x \in k$.  We claim $C$ is closed, and thus as a closed subset of a compact set, $C$ is compact.  To see this, we will show that $C$ is sequentially closed, i.e. if $(k_i,x_i)$ is a sequence in $C$ converging to $(k_{\infty},x_{\infty}) \in K \times \overline{B_R(0)}$, then $(k_{\infty},x_{\infty}) \in C$.  We have that $k_i \to k_{\infty}$ and $x_i \to x_{\infty}$, so suppose $x_{\infty} \not \in k_{\infty}$.  Since $k_{\infty}$ is a compact set, $x \not \in k_{\infty}$ implies there exists $\epsilon > 0$ such that $B_{\epsilon}(x)$ is contained in the complement of $k_{\infty}$.  This implies that $\liminf d_{\textrm{Haus}}(k_i,k_{\infty}) \geq \epsilon$, contradicting the fact that $k_i \to k_{\infty}$ in the Hausdorff metric.  
So we have a continuous map from $C$ to $\mathbb{R}^3$ sending $(k,x)$ to $x$.  The image under this map is compact.  
\end{proof}
\end{lemma}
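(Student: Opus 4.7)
The plan is to realize $U := \bigcup_{k \in K} k$ as the continuous image of a compact set inside a product space, which immediately forces $U$ to be compact in $\mathbb{R}^3$. I would proceed in three steps.

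First, I would establish a uniform bound: there exists $R > 0$ such that every $k \in K$ is contained in $\overline{B_R(0)}$. Fixing any reference element $k_0 \in K$, the map $k \mapsto d_{\textrm{Haus}}(k, k_0)$ is $1$-Lipschitz on $K$ (by the triangle inequality for $d_{\textrm{Haus}}$), hence continuous, and therefore attains a finite maximum $M$ on the compact space $K$. Since $k_0$ is itself compact, it is contained in some ball $\overline{B_{R_0}(0)}$, and then any $x \in k$ satisfies $d(x, k_0) \leq d_{\textrm{Haus}}(k, k_0) \leq M$, giving $|x| \leq R_0 + M =: R$.

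Second, I would consider the incidence set
\[ C := \{(k,x) \in K \times \overline{B_R(0)} : x \in k\} \]
inside the compact product $K \times \overline{B_R(0)}$, and show $C$ is closed (and hence compact). The key calculation is sequential: if $(k_n, x_n) \in C$ with $k_n \to k_\infty$ in Hausdorff metric and $x_n \to x_\infty$ in $\mathbb{R}^3$, then
\[ d(x_\infty, k_\infty) \;\leq\; |x_\infty - x_n| + d(x_n, k_\infty) \;\leq\; |x_\infty - x_n| + d_{\textrm{Haus}}(k_n, k_\infty) \;\longrightarrow\; 0, \]
and since $k_\infty$ is closed this forces $x_\infty \in k_\infty$, so $(k_\infty, x_\infty) \in C$. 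Finally, $U$ is the image of $C$ under the continuous projection $(k,x) \mapsto x$, so $U$ is compact.

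There is no real obstacle here; the only point requiring minor care is the correct use of the Hausdorff metric, specifically that $k_n \to k_\infty$ controls both the distance from each point of $k_\infty$ to $k_n$ and vice versa, which is exactly what lets the closure argument in step two go through. Everything else is a routine continuous-image-of-compact argument.
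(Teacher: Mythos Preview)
Your proposal is correct and follows essentially the same route as the paper: establish a uniform radius $R$, show the incidence set $C \subseteq K \times \overline{B_R(0)}$ is closed (hence compact), and project. Your arguments for both the uniform bound (via the $1$-Lipschitz distance-to-$k_0$ function) and the closedness of $C$ (via the direct estimate $d(x_\infty,k_\infty)\le |x_\infty-x_n|+d_{\mathrm{Haus}}(k_n,k_\infty)$) are in fact slightly cleaner than the paper's versions of the same steps.
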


The following two Lemmas, which we state without proof, are immediate consequences of Rouch\'e's theorem:

\begin{lemma} \label{l:HausdorffCont}
Let $A$ be the set of power series with bounded coefficients equipped with the product topology.  Let $\mathcal{C}$ be the set of compact subsets of $\mathbb{C}$ equipped with the Hausdorff topology.  Then the map $\rho:A \to \mathcal{C}$ defined by 
$$\rho(f) = S^1 \cup \{z \in \mathbb{D} : f(z) = 0\}$$
is continuous.  
\end{lemma}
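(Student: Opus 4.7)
The plan is to reduce continuity of $\rho$ to Rouché's theorem, via the preliminary observation that the product topology on $A$, together with the bounded-coefficient hypothesis, implies uniform convergence on every closed subdisk $\{|z|\le r\}$ with $r<1$. Concretely, if the coefficients of every series in $A$ are bounded in modulus by some $M$ and $f_n\to f$ in the product topology, then for $|z|\le r$ one has
\[|f_n(z)-f(z)|\ \le\ \sum_{k=0}^{N} |a_{n,k}-a_k|\,r^k\ +\ \frac{2M\,r^{N+1}}{1-r},\]
and choosing $N$ first to shrink the tail and then $n$ large to shrink the finite head yields uniform convergence on $\{|z|\le r\}$.

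Now fix $f\in A$ with $f\not\equiv 0$ and $\epsilon>0$, and set $r:=1-\epsilon/2$, so that every point of the annulus $\{r\le |z|\le 1\}$ lies within distance $\epsilon/2$ of $S^1$. Since $f$ is analytic on $\mathbb{D}$, it has only finitely many zeros $z_1,\ldots,z_m$ in $\{|z|\le r\}$. Enclose them in pairwise disjoint open disks $D_i\subset\{|z|<r\}$ of radius less than $\epsilon/2$, and let $K:=\{|z|\le r\}\setminus\bigcup_i D_i$. Then $m_0:=\min_{z\in K}|f(z)|>0$ since $f$ is nonvanishing on the compact set $K$. Using the first step, choose a product-topology neighborhood $U$ of $f$ such that every $g\in U$ satisfies $\sup_{|z|\le r}|f(z)-g(z)|<m_0$.

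Apply Rouché's theorem on each circle $\partial D_i$ and on $\{|z|=r\}$: on each of these circles $|f-g|<m_0\le|f|$, so $g$ has the same number of zeros (counted with multiplicity) as $f$ in each $D_i$ and in $\{|z|<r\}$ overall. Consequently $g$ has no zeros in $K$; every zero of $g$ in $\{|z|<r\}$ therefore lies within $\epsilon/2$ of some $z_i$; and, conversely, each $z_i$ has a zero of $g$ within distance $\epsilon/2$. Any remaining zeros of $f$ or $g$ in $\{r\le |z|<1\}$ lie automatically within $\epsilon/2$ of $S^1\subset\rho(f)\cap\rho(g)$, so $d_{\textrm{Haus}}(\rho(f),\rho(g))<\epsilon$.

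The main technical point is the first step: converting product-topology convergence of coefficients into uniform convergence on compact subdisks, which is precisely where the bounded-coefficient hypothesis is used and without which the argument collapses. Once that uniform convergence is secured, the remainder is a textbook Rouché argument; the degenerate case $f\equiv 0$, if it arises at all in $A$, must simply be excluded, since $\rho$ is plainly discontinuous there (any small perturbation of the zero series has only finitely many zeros in $\mathbb{D}$, while $\rho(0)=\overline{\mathbb{D}}$).
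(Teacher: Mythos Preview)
Your argument is correct and is precisely the approach the paper has in mind: the paper states the lemma without proof, noting only that it is an immediate consequence of Rouch\'e's theorem, and your write-up supplies exactly those details (uniform convergence on compact subdisks from the bounded-coefficient hypothesis, then Rouch\'e on small circles around the zeros and on $\{|z|=r\}$). Your observation about the degenerate case $f\equiv 0$ is a fair caveat; in the paper's applications the relevant power series are of the form $G(w,\cdot)-1$ with leading coefficient $1$, so this case does not arise.
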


\begin{lemma}\label{series_approx} 
Fix real numbers $M>0$, $0<r<1$, $\epsilon>0$. Suppose $\alpha$ is a power series whose coefficients are all bounded in absolute value by $M$.
Then there exists a real number $N = N(\alpha,r,\epsilon,M)$ such that for every power series $\beta$ whose coefficients are all bounded in absolute value by $M$ and whose first $N$ terms equal the first $N$ terms of $\alpha$,  for each root $z$ of $\alpha$ with $|z|<r$ there exists a root $z'$ of $\beta$ such that $|z-z'|<\epsilon$. \qed
  \end{lemma}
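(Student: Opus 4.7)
The plan is to reduce to a finite configuration of disks and then apply Rouché's theorem on each, with the coefficient-agreement hypothesis controlling $|\alpha-\beta|$ uniformly on a slightly larger disk.

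First I would observe that $\alpha$, viewed as a power series with coefficients bounded by $M$, defines a holomorphic function on $\mathbb{D}$, and if $\alpha\not\equiv 0$ its zeros are isolated. If $\alpha\equiv 0$ the conclusion is vacuous (it has no roots), so assume $\alpha\not\equiv 0$. Pick $R$ with $r<R<1$; since $\{|z|\le R\}$ is compact and the zeros of $\alpha$ are isolated, there are only finitely many roots $z_1,\ldots,z_k$ of $\alpha$ with $|z_j|<r$. Choose $\delta>0$ small enough that $\delta<\epsilon$, the closed disks $\overline{B_\delta(z_j)}$ are pairwise disjoint, each is contained in $\{|z|\le R\}$, and $\alpha$ has no zero on $\bigcup_j \partial B_\delta(z_j)$. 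Set
\[ m := \min_{1\le j\le k}\;\min_{|z-z_j|=\delta}|\alpha(z)|>0. \]

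Next I would quantify the approximation. If $\beta$ has coefficients bounded by $M$ in absolute value and its first $N$ coefficients agree with those of $\alpha$, then for every $z$ with $|z|\le R$,
\[ |\alpha(z)-\beta(z)|\;\le\;\sum_{n\ge N}|a_n-b_n|\,|z|^n\;\le\;2M\sum_{n\ge N}R^n\;=\;\frac{2MR^N}{1-R}. \]
Choose $N=N(\alpha,r,\epsilon,M)$ large enough that $\dfrac{2MR^N}{1-R}<m$; this is possible since $R<1$ and $m$ depends only on $\alpha,r,\epsilon,M$ (via the chosen $R$ and $\delta$).

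Finally, I would apply Rouché's theorem on each circle $\{|z-z_j|=\delta\}$: on that circle $|\alpha(z)-\beta(z)|<m\le|\alpha(z)|$, so $\alpha$ and $\beta$ have the same number of zeros (counted with multiplicity) inside $B_\delta(z_j)$. In particular $\beta$ has at least one zero in $B_\delta(z_j)\subset B_\epsilon(z_j)$, proving the lemma.

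The steps are all routine once the disks are set up; the only mild subtlety is choosing $R$ strictly between $r$ and $1$ so that the geometric tail bound $2MR^N/(1-R)$ depends continuously on $N$ and can be made arbitrarily small, while still giving a positive lower bound $m$ on $|\alpha|$ along the Rouché circles. Since $\alpha$ is given, this $R$ (and hence $\delta$, $m$, $N$) may depend on $\alpha$, which matches the dependence asserted in the statement.
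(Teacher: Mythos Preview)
Your proof is correct and matches the paper's approach exactly: the paper states this lemma without proof, merely noting that it is an immediate consequence of Rouch\'e's theorem, which is precisely the argument you spell out. One small quibble: if $\alpha\equiv 0$ then every point of $\mathbb{D}$ is a root (not none), so the lemma as stated would actually fail in that degenerate case---but this never arises in the paper's applications, and the implicit assumption $\alpha\not\equiv 0$ is harmless.
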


Now we prove the first main theorem:

\begin{proof}[Proof of Theorem \ref{t:improvedinsidecharacterization}]

For the reader's convenience, we reproduce  here the statement of Theorem \ref{t:improvedinsidecharacterization}: For any $\lambda \in (1,2]$,

\[\X_\lambda \cap \overline{\mathbb{D}} = S^1\cup \left \{z\in\mathbb{D}: G(w, z)=1\text{ for some }\lambda\text{-suitable sequence }w \right\}.\]

\medskip

By Remark \ref{rem:holdsfortoplevel}, the result holds for $\lambda=2$.  So fix $\lambda \in (1,2)$.  For brevity, let 
\[Z_\lambda:=\{z\in \mathbb{D}: G(w, z)=1\text{ for some }\lambda\text{-suitable sequence }w\}.\] First, we show that $S^1\cup Z_\lambda$ is compact. For each sequence $w$, the function from $\mathbb{D}$ to $\mathbb{C}$ given by $z \mapsto G(w, \cdot)-1$ is a power series with bounded coefficients.  Furthermore, the map from the set of sequences $w$ (with the product topology) to the set of power series (with the product topology on coefficients) given by $w \mapsto G(w,\cdot)-1$ is continuous.  Therefore, Lemma  \ref{l:HausdorffCont} implies that the map $\rho$  from the set of sequences with the product topology to $\mathcal{C}$, the set of compact subsets of $\mathbb{C}$ with the Hausdorff topology, given by
$$\rho(w) = S^1 \cup \{z \in \mathbb{D} : G(w,z) = 1\}$$ is continuous.  
 By Lemma \ref{lem:lambdasuitableseqsclosed},  the set of all $\lambda$-suitable sequences is closed (in the product topology on the set of sequences), and hence compact.  
Therefore, since $\rho$ is continuous, 
$$\{\rho(w) : w \textrm{ is } \lambda-\textrm{suitable}\}$$ is a compact subset of $\mathcal{C}$. 
 Hence,  Lemma \ref{lem:compactness} implies that
\[\bigcup_{w\text{ is }\lambda\text{-suitable}}\rho(w)\]
 is compact. 
But this set is precisely $S^1\cup Z_\lambda$, so we have shown $S^1\cup Z_\lambda$ is compact for any $\lambda \in [1,2]$.

Next, we show that 
\[\X_\lambda\cap \overline{\mathbb{D}}\subseteq S^1\cup Z_\lambda.\]
Theorem~\ref{t:InsideParryConjugatesDontMatter} shows that 
\begin{equation} \label{eq:contained1}
\X_\lambda\cap \overline{\mathbb{D}}=\bigcap_{\lambda'>\lambda}Y_{\lambda'},
\end{equation}
 where $Y_{\lambda'}$ is defined to be the closure of the set of roots in $\overline{\mathbb{D}}$ of all Parry polynomials $P_w$ such that $w$ is admissible and  $w^\infty \leq_E\It_{\lambda'}$, union with $S^1$.  For each such $w$ let $w_r$ be the sequence $$w_r :=(\r(w))^\infty.$$ 
 So fix $\lambda' > \lambda$ and consider any admissible word $w$ such that $w^{\infty} \leq_E \It_{\lambda'}$. 
By Lemma~\ref{G-H-P}, 
$$P_w(z) = (1-z^{|w|})G(w_r,z).$$  
 By Lemma \ref{lem:reverseislambdasuitable}, $w_r$ is $\lambda'$-suitable.  \color{black}
 Hence, all roots in $\overline{\mathbb{D}}$ of $P_w$ are in $S^1 \cup Z_{\lambda'}$.  Then, since $Z_{\lambda'}$ is closed,  we have that 
 \begin{equation} \label{eq:contained2}
 Y_{\lambda'}\subseteq S^1\cup Z_{\lambda'}.
 \end{equation}

 Now, combining \eqref{eq:contained1} and \eqref{eq:contained2} shows that for any point $z\in \X_\lambda\cap \mathbb{D}$, for each $n\in\mathbb{N}$, there exists a $(\lambda+{1\over n})$-suitable sequence $v_n$ such that $G(v_n, z)=1$. Let $v_\infty$ be an accumulation point of the set $\{v_n : n \in \mathbb{N}\}$.  By Lemma~\ref{lem:semicont}, the sequence $v_\infty$ is $\lambda$-suitable.  The continuity of $w\mapsto G(w, \cdot)$ implies that $G(v_\infty, z)=1$. Hence $\X_{\lambda}\cap\overline{\mathbb{D}}\subseteq S^1\cup  Z_{\lambda}$.

Lastly, we show that $S^1\cup Z_\lambda\subseteq \X_\lambda\cap \overline{\mathbb{D}}.$  We know from \cite{BrayDavisLindseyWu} that $S^1 \times [1,2] \subset \Upsilon_2^{cp}$.  Thus $S^1\subset \X_\lambda$, so it suffices to show that $Z_\lambda\subset \X_{\lambda}$.  Fix a point $z\in Z_\lambda$ and let $w$ be a $\lambda$-suitable sequence such that $G(w, z)=1$.  By condition (4) of Definition~\ref{def:improvedlambdasuitability}, there exists a sequence $w'$ such that $w=\mathfrak{D'}^k(w')$, and by Lemma \ref{lem:suitabilityrenormalization}, $w'$ is $\lambda^{2^k}$-suitable, and $\lambda^{2^k}\geq \sqrt{2}$. In particular, if $\lambda\geq \sqrt{2}$, we can let $k=0$ and $w'=w$.  As a consequence, there are infinitely many prefixes of $w'$ that end with $1$ and have positive cumulative sign.

For any $m \in \mathbb{N}$ such that $\pr_m(w')$ has positive cumulative sign and any word $w''$ with positive cumulative sign, it follows immediately from the definitions of a Parry polynomial and of $G$ that the first $m$ terms of the power series $G(w', z)-1$ and  $P_{w'' \cdot \r(\pr_m(w'))}(z)$ agree. Therefore, for any fixed $\epsilon_1>0$, by Lemma~\ref{series_approx} there exists $N \in \mathbb{N}$ such that $\pr_N(w')$ ends with $1$ and has positive cumulative sign, and 
for any word $w''$ with positive cumulative sign, there exists a point $z' \in  B_{\epsilon_1}(z^{2^k})$ such that 
\begin{equation} \label{eq:closeroot}
P_{w''\cdot \r(\pr_N(w'))}(z')=0.
\end{equation}

For any fixed $\lambda'$ satisfying $2> \lambda' > \lambda^{2^k}$, pick a critically periodic growth rate $\lambda'' \in (\lambda^{2^k},\lambda')$ and word $w_0$ with positive cumulative sign such that  $\It_{\lambda''}=w_0^{\infty}$. Since $\lambda'' < \lambda'$, for sufficiently large $n$, 
$$w_0^n <_E \pr_{n|w_0|}(\It_{\lambda'}).$$  Hence, by Proposition~\ref{prop:dense}, there exists $n \in \mathbb{N}$ and a word $w_1'$ such that the word 
$$w_1:= w_0^nw'_1$$
is dominant, $|w_1| > |w'|$, and 
\begin{equation} \label{eq:prefixineq}
w_1<_E\pr_{|w_1|}(\It_{\lambda'}).
\end{equation}

By Lemma~\ref{lem:main}, 
$$w_1 \cdot \r(\pr_N(w))'$$ is admissible.
  By \eqref{eq:closeroot}, 
  $$P_{w_1\cdot \r(\pr_N(w'))}$$ has a root within distance $\epsilon_1$ of $z^{2^k}$.  By \eqref{eq:prefixineq},
 \[(w_1\cdot \r(\pr_N(w')))^\infty<_E \It_{\lambda'}.\]
 Hence, the $k^{\textrm{th}}$ doubling of $w_1\cdot \r(\pr_N(w'))$, denoted as $w_d$, satisfies 
 \[w_d^\infty<_E\It_{(\lambda')^{1/2^k}}\] 
 and $P_{w_d}$ has leading root in $[\lambda, (\lambda')^{1/2^k}]$ and a root in $B_{\epsilon'_1}(z)$, where $\epsilon'_1$ is the diameter of the preimage of $B_{\epsilon'}(z^{2^k})$ under the map $z\mapsto z^{2^k}$.
 
 Now, since $\epsilon_1>0$ and $\lambda'>\lambda$ were arbitrary, and since $\U$ is closed, we obtain that $(z, \lambda)\in \U$, and hence $z\in \X_\lambda$.
\end{proof}

\section{Characterization outside the unit cylinder} \label{sec:outsidecharacterization}

The goal of this section is to prove Theorem \ref{t:improvedoutsidecharacterization}, a characterization of the part of the Master Teapot that is outside the unit cylinder. This follows largely from arguments in \cite{TiozzoGaloisConjugates}, but we will include a proof here for the sake of completeness.

The following proposition is essentially a restatement of \cite[Proposition 3.3]{TiozzoGaloisConjugates}:
\begin{proposition} \label{p:rootsmovecontinuously}
 The map $\Phi:(1, 2) \rightarrow \{\text{compact subsets of \ }\overline{\mathbb{D}}\}$ given by 
\[\Phi(\lambda)=S^1\cup \left \{z: H(\lambda, z^{-1})=0 \right\}\]
is continuous in the Hausdorff topology.  
\end{proposition}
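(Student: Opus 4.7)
The plan is to reduce this to a direct application of Rouch\'e's / Hurwitz's theorem once one establishes that the power series $z \mapsto H(\It_\lambda, z^{-1})$, regarded as a holomorphic function on $\mathbb{D}$, depends continuously on $\lambda \in (1,2)$ with respect to the compact-open topology. Given such continuity, Hurwitz's theorem immediately yields continuous dependence of the zero set on compact subsets of $\mathbb{D}$, and combined with the fact that $S^1$ is present in $\Phi(\lambda)$ for every $\lambda$, this upgrades to Hausdorff continuity of $\Phi$ on all of $\overline{\mathbb{D}}$.

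First I would make precise what is meant by $H(\lambda, z^{-1})$, namely $H(\It_\lambda, z^{-1})$, and observe that via the substitution $w = z^{-1}$ the zero set in $\mathbb{D}$ corresponds to the zero set of $w \mapsto H(\It_\lambda, w)$ outside $\overline{\mathbb{D}}$, which is the object studied in \cite[Proposition 3.3]{TiozzoGaloisConjugates}. Then I would invoke the Milnor--Thurston kneading formalism: the kneading determinant associated to the tent map $f_\lambda$ varies continuously with $\lambda$, and by Lemma \ref{G-H-P} together with Definition \ref{def:ifs}(3), $H(\It_\lambda, z^{-1})$ equals this kneading determinant up to a factor that is holomorphic and non-vanishing on $\mathbb{D}$. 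The apparent jumps of $\lambda \mapsto \It_\lambda$ at parameters where $\It_\lambda$ becomes eventually periodic are cancelled by the alternating signs $(-1)^{\sum_{i=1}^n w_i}$ appearing in the definition of $H$, which is the standard mechanism ensuring continuity of the kneading invariant.

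With continuity of the coefficients in hand, I would proceed as follows. Fix $\lambda_0 \in (1,2)$, and for any $\epsilon>0$ choose an exhaustion of $\mathbb{D}$ by compact sets $K_r = \{|z| \leq 1 - \epsilon\}$. For each zero $z_0$ of $H(\It_{\lambda_0}, z^{-1})$ in the interior of $K_r$, choose a small disk around $z_0$ on whose boundary $H(\It_{\lambda_0}, z^{-1}) \neq 0$; by compact-open continuity and Rouch\'e, for $\lambda$ sufficiently close to $\lambda_0$ the function $H(\It_\lambda, z^{-1})$ has exactly the same number of zeros inside that disk. Conversely, compactness of the complement of these disks within $K_r$ rules out new zeros appearing from nowhere. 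Letting $\epsilon \to 0$ and using that $S^1 \subset \Phi(\lambda)$ for every $\lambda$, one obtains the desired Hausdorff continuity.

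The main obstacle is ingredient (i): making rigorous the statement that the power series $z \mapsto H(\It_\lambda, z^{-1})$ varies continuously with $\lambda$ despite $\It_\lambda$ itself being discontinuous at countably many parameters. This is precisely the point handled in \cite[Proposition 3.3]{TiozzoGaloisConjugates}, so in practice the argument should consist of matching notation with that reference, verifying the sign convention, and noting that inverting $z$ sends the exterior region treated in \cite{TiozzoGaloisConjugates} to the interior region treated here. The remainder of the proof is then a routine Hurwitz argument on compact subsets of $\mathbb{D}$, together with the trivial observation that the common factor $S^1$ is constant in $\lambda$.
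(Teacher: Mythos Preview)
Your overall strategy—reduce to Hurwitz once you control the coefficients—matches the paper's, and both arguments ultimately hinge on the same fact: at parameters where $\It_\lambda$ jumps, the two limiting power series differ only by factors whose zeros lie on $S^1$. But there is a genuine slip in your account of \emph{why} this works.

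You state as your plan that $\lambda \mapsto H(\It_\lambda, z^{-1})$ is continuous in the compact-open topology, and you justify this by saying the jumps in $\It_\lambda$ are ``cancelled by the alternating signs.'' This is false. A short computation (expand $F$ recursively and track the cumulative sign $s_n$) gives $H(\It_\lambda, z^{-1}) = (1-z)\sum_{n\ge 0} s_n z^n$. At a periodic parameter $\lambda_0$ with $\It_{\lambda_0}=w_0^\infty$, the left and right limits of the itinerary are $w_0^\infty$ and $(w_0')^\infty$, and the corresponding sign-series are $\theta_{\lambda_0}(z)/(1-z^{|w_0|})$ and $\theta_{\lambda_0}(z)/(1+z^{|w_0|})$ respectively, where $\theta_{\lambda_0}$ is the (finite, continuous) Milnor--Thurston kneading polynomial. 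So $H$ genuinely jumps; what is true is that the jump is by the factor $(1+z^{|w_0|})/(1-z^{|w_0|})$, which is holomorphic and non-vanishing on $\mathbb{D}$. The sign convention makes the Milnor--Thurston invariant $\theta_\lambda$ continuous, but $H(\It_\lambda,\cdot)$ differs from $\theta_\lambda$ by a $\lambda$-dependent factor that is itself discontinuous. (Also, Lemma~\ref{G-H-P} relates $H$ to Parry polynomials, not to the kneading determinant, so that citation does not do what you want.)

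The fix is small but essential: argue at the level of zero sets, not functions. Either show directly, as the paper does, that the left/right limits of $H$ at a periodic $\lambda_0$ differ by cyclotomic factors and hence have the same roots in $\mathbb{D}$; or, in your language, observe that for \emph{every} $\lambda$ one has $\{z\in\mathbb{D}:H(\It_\lambda,z^{-1})=0\}=\{z\in\mathbb{D}:\theta_\lambda(z)=0\}$ because the relating factor is always zero-free on $\mathbb{D}$, and then apply Hurwitz to the continuous family $\theta_\lambda$. Once this is corrected, your Hurwitz/Rouch\'e step together with the constant $S^1$ gives the Hausdorff continuity exactly as in the paper.
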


\begin{proof}
We only need to show that it is continuous at every point $\lambda_0\in (1, 2)$. If $\It_{\lambda_0}$ is not periodic, the forward orbit of $1$ under $f_{\lambda_0}$ never hits $1/{\lambda_0}$, hence $\It: \lambda\mapsto \It_\lambda$ is continuous at $\lambda_0$. This is because for any cylinder set $[a_1,\ldots,a_j]$, the set 
$$\{\lambda_1 \in (1,2]: \pr_j(\It_{\lambda_1}) = a_1\ldots a_j\}$$ is open.  The continuity of $\Phi$ follows from the definition of $H$ (Definition \ref{def:ifs}) and Lemma \ref{series_approx}. 

If $\It_{\lambda_0}$ is periodic, let $w_0$ be the word of shortest length such that $\It_{\lambda_0}=w_0^\infty$, and let $w_0'$ be the word with the same length as $w_0$ such that $\pr_{|w_0|-1}(w_0) = \pr_{|w_0|-1} (w_0')$ but whose last digit differs from that of $w_0$. Then the proof of Lemma 12.2 in \cite{MilnorThurston} implies 
$$\lim_{\lambda\rightarrow\lambda_0^-}\It_\lambda=w_0^\infty$$ and $$\lim_{\lambda\rightarrow\lambda_0^+}\It_\lambda={w_0'}^\infty.$$ 
 However, a simple computation (which we leave to the reader) shows that $H(w_0^\infty, z^{-1})$ and $H({w_0'}^\infty, z^{-1})$ differ by cyclotomic factors, and hence have the same roots inside $\mathbb{D}$.
\end{proof}

\begin{proof}[Proof of Theorem \ref{t:improvedoutsidecharacterization}]
  For convenience of notation, set
  \[R_\lambda=\{z: H(\It_\lambda, z)=0\}\]
  Let $I$ be a small closed neighborhood of $\lambda$ in $(1, 2)$. To show Theorem \ref{t:improvedoutsidecharacterization}, we only need to show \[ \bigcup_{\lambda\in I} \left((\X_\lambda\backslash\mathbb{D})\cup S^1\right)=\bigcup_{\lambda\in I}\left((R_\lambda\backslash\mathbb{D})\cup S^1\right).\]
   The fact that the right hand side is compact is due to Proposition~\ref{p:rootsmovecontinuously}. Furthermore, due to Remark~\ref{series_approx}, a dense subset of the left hand side is dense in the right hand side, so they are identical.
\end{proof}

\section{Algorithms to test membership of $\X_\lambda$} \label{sec:membership}

In this section we will describe an algorithm to check if a point $z_0\in \mathbb{C}$ is in the complement of a slice $\X_\lambda$, for $\lambda\in (1, 2)$.

Firstly, if $\lambda<\sqrt{2}$, Theorems \ref{t:improvedinsidecharacterization} and \ref{t:improvedoutsidecharacterization} implies that $z\in \X_\lambda$ if and only if  $z^2\in \X_{\lambda^2}$, so we can always reduce the question to the case $\lambda\in [\sqrt{2}, 2)$.

\subsection{Testing $z_0$ with $|z_0| > 1$}

When $|z_0|>1$, Theorem \ref{t:improvedoutsidecharacterization} gives us a straightforward way to test if $z_0\not\in \X_\lambda$ --  calculating the first few terms of the power series $H(\It_\lambda, z^{-1})$, then checking if $z_0^{-1}$ is a root of this power series. More precisely, we have the following algorithm:

\medskip
\begin{algorithm}[H] \label{algorithm1}
  \For{$n>1$}{
    Calculate $\pr_{n+1}(\It_\lambda)$\;
    Find the polynomial $P_n$ which consists of the first $n$-terms of power series $H(\It_\lambda, z^{-1})$\;
    If $ \left|P_n(z_0^{-1})\right|>{2|z_0|^{-n}\over 1-|z_0|}$, then $z_0\not\in\X_\lambda$\;
  }
\caption{\label{alg:outer} Algorithm to verify that $|z_0|>1$ is not in $\X_\lambda$}
\end{algorithm}
\medskip

\begin{remark}
If instead of checking if $z_0\not\in\X_\lambda$, we want to see if an $\epsilon$-neighborhood of $z_0$ is contained in the complement of $\X_\lambda$, we can change the last line of Algorithm \ref{algorithm1} to make use of Rouch\'e's theorem.
\end{remark}

\subsection{Testing $z_0$ with $|z_0| < 1$.}

If $|z_0|<1$, a way to certify that $z_0\not\in\X_\lambda$ is by first finding the set of all words of length $N$ that satisfy Conditions (1)-(3) of Definition~\ref{def:improvedlambdasuitability} (Condition (4) is trivial because $\lambda\geq\sqrt{2}$), denoted as $\mathcal{M}_{N, \lambda}$, for each word $w=(w_1\dots w_N)\in \mathcal{M}_{N, \lambda}$, evaluating $f_{w_N, z_0}^{-1}\circ f_{w_{N-1}, z_0}^{-1}\dots f_{w_1, z_0}^{-1}(1)$ and checking that they are all sufficiently large. More precisely, the algorithm can be described as follows:
\medskip 

\begin{algorithm}[H]
  \For{$N>1$}{
    Let $\mathcal{M}_{N, \lambda}$ be the set of all words of length $N$ that satisfies Conditions (1)-(3) in Definition~\ref{def:improvedlambdasuitability}\;
    Let $flag\leftarrow False$\;
    \For{$w\leftarrow (w_1\dots w_N)\in\mathcal{M}_{N, \lambda}$}{
      \If{$f_{w_N, z_0}^{-1}\circ f_{w_{N-1}, z_0}^{-1}\dots f_{w_1, z_0}^{-1}(1)\leq {2\over 1-|z_0|}$}{
        $flag\leftarrow True$\;
        Break\;
        }
     }
    If $flag=False$, then $z_0\not\in\X_\lambda$\;
  }
\caption{\label{alg:inner} Algorithm to verify that $|z_0|<1$ is not in $\X_\lambda$, where $\lambda\in[\sqrt{2},2)$.}
\end{algorithm}
\medskip

The reason that Algorithm~\ref{alg:inner} is true is due to the following proposition:

\begin{proposition}\label{prop:alg2}
  Let $\lambda\in [\sqrt{2}, 2)$, and let $\mathcal{M}_{N, \lambda}$ be defined as in Algorithm~\ref{alg:inner}. Suppose $|z|<1$, then $z\not\in\X_\lambda$ if and only if there exists $N \in \mathbb{N}$ such that for every word $w=w_1\dots w_N\in\mathcal{M}_N$, 
  \[f_{w_N, z}^{-1}\circ \ldots \circ f_{w_1, z}^{-1}(1)\geq {2\over 1-|z|}+\epsilon.\]
\end{proposition}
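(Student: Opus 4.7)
The proof reduces to translating the analytic condition $G(w,z)=1$ into a purely dynamical condition on backward iterates of $f_{0,z}$ and $f_{1,z}$, after which both directions of the proposition follow by combining this translation with Theorem~\ref{t:improvedinsidecharacterization}. Set $R:=2/(1-|z|)$. Since $|f_{0,z}(x)|=|z||x|$ and $|f_{1,z}(x)|\le 2+|z||x|$, both maps send $\overline{B_R(0)}$ to itself and, more generally, send the disk of radius $R+\delta$ to the disk of radius $R+|z|\delta$. Consequently any $k$-fold composition $f_{a_1,z}\circ\ldots\circ f_{a_k,z}$ contracts a bound of $R+\delta$ to $R+|z|^k\delta$, and every value of $G(\cdot,z)$ lies in $\overline{B_R(0)}$.

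The first step I would prove is the following key lemma: writing $y_n(w):=f_{w_n,z}^{-1}\circ\ldots\circ f_{w_1,z}^{-1}(1)$, one has $G(w,z)=1$ if and only if $|y_n(w)|\le R$ for every $n\ge 1$. One direction is immediate from the identity $y_n(w)=G(\sigma^n w,z)$, which lies in the attractor. For the other direction, since $f_{w_1,z}\circ\ldots\circ f_{w_n,z}(y_n(w))=1$ by construction and this composition is a $|z|^n$-contraction, one gets $|G(w,z)-1|\le|z|^n(1+R)\to 0$.

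Granting the lemma, the ($\Leftarrow$) direction of the proposition is short. Under the standing assumption $\lambda\in[\sqrt 2,2)$ (obtained from the renormalization reduction at the start of Section~\ref{sec:membership}), condition (4) of Definition~\ref{def:improvedlambdasuitability} collapses to condition (3) because $\lambda^{2^k}\ge 2$ for $k\ge 1$, so $\mathcal{M}_{N,\lambda}$ is exactly the set of $N$-prefixes of $\lambda$-suitable sequences. If some $N,\epsilon$ satisfy $|y_N(w)|\ge R+\epsilon$ for every $w\in\mathcal{M}_{N,\lambda}$ while $z\in\X_\lambda$, then Theorem~\ref{t:improvedinsidecharacterization} supplies a $\lambda$-suitable $v$ with $G(v,z)=1$; the lemma gives $|y_N(v)|\le R$, contradicting $\pr_N(v)\in\mathcal{M}_{N,\lambda}$.

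The ($\Rightarrow$) direction is the main work and I would prove it by a compactness/diagonal argument. Assume $z\not\in\X_\lambda$ but that no such $N$ exists, so for every $N$ there is $w^{(N)}\in\mathcal{M}_{N,\lambda}$ with $|y_N(w^{(N)})|<R+1$. A diagonal extraction on $\{0,1\}^{\mathbb N}$ yields a subsequence $N_k\to\infty$ along which $\pr_m(w^{(N_k)})$ stabilizes for every fixed $m$, producing a limit sequence $v$. Each of conditions (1)--(3) is closed on finite prefixes (essentially Lemma~\ref{lem:lambdasuitableseqsclosed}), so it is inherited by $v$, making $v$ $\lambda$-suitable. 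For fixed $m$ and $k$ large the contraction estimate from the first paragraph applied to the $(N_k-m)$-fold composition going from $y_{N_k}$ back to $y_m$ yields $|y_m(v)|=|y_m(w^{(N_k)})|\le R+|z|^{N_k-m}\to R$, hence $|y_m(v)|\le R$ for all $m$. The lemma then gives $G(v,z)=1$ and Theorem~\ref{t:improvedinsidecharacterization} forces $z\in\X_\lambda$, a contradiction. The only delicate step is the passage to the limit, and it hinges entirely on the closedness of conditions (1)--(3) on prefixes together with the geometric-series improvement in the contraction estimate, which converts the slack $R+1$ at level $N_k$ into the tight bound $R$ at every finite level.
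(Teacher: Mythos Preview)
Your proof is correct and follows essentially the same strategy as the paper: both hinge on the forward-invariance of the disk $\overline{B_R(0)}$ under $f_{0,z},f_{1,z}$, the equivalence between $G(w,z)=1$ and boundedness of the backward orbit $y_n(w)$, and a compactness argument to produce a $\lambda$-suitable limit sequence. The organizational differences are minor: you isolate an explicit ``key lemma'' (which the paper uses only implicitly, handling the ($\Leftarrow$) direction via a $\delta$-approximation instead), and for ($\Rightarrow$) you run a diagonal extraction with a contraction estimate to pull the bound $R+1$ down to $R$, whereas the paper observes directly that the sets $C_N=\{w:\pr_N(w)\in\mathcal{M}_N,\ |y_N(w)|\le R\}$ are nested ($C_{N+1}\subset C_N$, from forward-invariance) and nonempty compact, so $\bigcap_N C_N\neq\emptyset$. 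Two small points: the identity $y_n(w)=G(\sigma^n w,z)$ holds \emph{because} $G(w,z)=1$, not unconditionally, so your phrasing there should be tightened; and the constant in $|G(w,z)-1|\le |z|^n(1+R)$ should be $2R$ (compare $G(\sigma^n w,z)$ with $y_n(w)$, both in $\overline{B_R}$), though this does not affect the conclusion.
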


\begin{proof}
First, we assume that there is some $N$ such that for every word $w=w_1\dots w_N\in\mathcal{M}_N$,
 \[f_{w_N, z}^{-1}\circ \ldots  \circ f_{w_1, z}^{-1}(1)\geq {2\over 1-|z|}+\epsilon\] and prove that $z\not\in \X_\lambda$.
  Suppose $z\in \X_\lambda$.  Then by Theorem~\ref{t:improvedinsidecharacterization}, there must be some $\lambda$-suitable sequence $v=v_1v_2\dots $ such that
  \[1=G(v, z)=\lim_{n\rightarrow\infty}f_{v_1, z} \circ\ldots \circ f_{v_n, z}(1)\]
  In other words, for any $\delta>0$, there is some $n>N$ such that
  \[ \left|f_{v_1, z}\circ\ldots \circ f_{v_n, z}(1)-1 \right|<\delta\]
 By the definition of $\mathcal{M}_N$, the word $v_1\dots v_N\in \mathcal{M}_N$. Let $u=f_{v_1, z}\circ \ldots \circ f_{v_n, z}(1)$. Then $|u-1|<\delta$. Because $f_{v_N, z}^{-1} \circ \ldots \circ f_{v_1, z}^{-1}$ is continuous, we can pick $\delta$ small enough such that 
 \[f_{v_N, z}^{-1}\circ \ldots \circ f_{v_1, z}^{-1}(u)>{2\over 1-|z|}.\]
  However,
 \[f_{v_N, z}^{-1}\circ \ldots \circ f_{v_1, z}^{-1}(u)=f_{v_{N+1}, z}\circ\ldots \circ f_{v_n, z}(1)\]
 By calculation, it is easy to verify that $1$ is in the disc 
 $$D_{2\over {1-|z|}}=\left \{z\in\mathbb{C}:|z|\leq {2\over {1-|z|}}\right \},$$ and both $f_{0, z}$ and $f_{1, z}$ send $D_{2\over {1-|z|}}$ to itself. Hence 
 \[ \left|f_{v_{N+1}, z}\circ\ldots \circ f_{v_n, z}(1) \right|\leq {2\over {1-|z|}},\] a contradiction.

 Now, for the other direction, we assume that for any $N \in\mathbb{N}$ there is some word $w=w_1\dots w_N\in\mathcal{M}_N$ such that 
 \[f_{w_N, z}^{-1}\circ \ldots \circ f_{w_1, z}^{-1}(1)\leq {2\over 1-|z|}\] 
 and prove that $z\not\in \X_\lambda$. Let $C_N$ be the set of sequences such that an $N$-prefix of it is in $\mathcal{M}_N$, and this $N$ prefix is of the form $w_1\dots w_N$ such that 
 $$f_{w_N, z}^{-1}\circ f_{w_{N-1}, z}^{-1}\dots f_{w_1, z}^{-1}(1)\leq {2\over 1-|z|}.$$ The fact that $f_{0, z}$ and $f_{1, z}$ both send $D_{2\over {1-|z|}}$ to itself implies that $C_{N+1}\subset C_{N}$, and all these sets are non empty and compact under the product topology, hence their intersection is non-empty. Let $w\in\bigcap_NC_N$, then $w$ is $\lambda$-suitable and it is easy to see that $G(w, z)=1$.
\end{proof}

Furthermore, we have an effective version of the Proposition~\ref{prop:alg2} above:

\begin{proposition}\label{p:effective}
Let $\lambda$, $z$, $N$ and $\epsilon$ as in Proposition~\ref{prop:alg2} above, ${1\over 2}<|z|<1$. Then for any $y\in \mathbb{C}$, if 
\[|y-z|<\min \left \{{1-|z|\over 2}, {(1-|z|)^2\epsilon\over 16}, |z|-{1\over 2}, {\epsilon\over N\cdot 2^{N+1}} \right \},\]
 then $y\not\in\X_\lambda$.
\end{proposition}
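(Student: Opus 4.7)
The strategy is to apply Proposition~\ref{prop:alg2} in both directions: use the $+\epsilon$ inequality at $z$ (guaranteed by the hypothesis) as a cushion that absorbs the perturbation caused by moving from $z$ to $y$, then apply Proposition~\ref{prop:alg2} in reverse to conclude $y \notin \X_\lambda$. The set $\mathcal{M}_{N,\lambda}$ depends only on $\lambda$, so the same finite collection of words must be tested at both base points. For each $w \in \mathcal{M}_{N,\lambda}$ set
\[
P^w(\zeta) \;:=\; f_{w_N,\zeta}^{-1} \circ \cdots \circ f_{w_1,\zeta}^{-1}(1);
\]
since $f_{0,\zeta}^{-1}(x) = x/\zeta$ and $f_{1,\zeta}^{-1}(x) = 2/\zeta - x/\zeta$ both have intercepts independent of $\zeta$, a short induction on $N$ shows that $P^w(\zeta)$ is a polynomial in $\zeta^{-1}$ of degree at most $N$ whose coefficients depend only on $w$ and are bounded in absolute value by $2$.

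The hypothesis delivers $P^w(z) \geq 2/(1-|z|) + \epsilon$ for every $w$, and the target conclusion (which, via Proposition~\ref{prop:alg2}, gives $y \notin \X_\lambda$) is $P^w(y) \geq 2/(1-|y|)$ for every $w$. It therefore suffices to ensure
\[
\bigl| P^w(y) - P^w(z) \bigr| \;+\; \left| \tfrac{2}{1-|y|} - \tfrac{2}{1-|z|} \right| \;<\; \epsilon,
\]
and each of the four entries in the minimum handles part of this. The first and third, $|y-z| < (1-|z|)/2$ and $|y-z| < |z|-\tfrac{1}{2}$, are positioning bounds: they force $\tfrac{1}{2} < |y| < 1$ and $1-|y| \geq (1-|z|)/2$, hence $1/|y|, 1/|z| < 2$. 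The second, $|y-z| < (1-|z|)^2 \epsilon/16$, combined with the elementary estimate
\[
\left| \tfrac{2}{1-|y|} - \tfrac{2}{1-|z|} \right| \;\leq\; \frac{4|y-z|}{(1-|z|)^2},
\]
bounds the right-hand jump by at most $\epsilon/4$.

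For the polynomial term, the coefficient bound gives $|P^w(y) - P^w(z)| \leq 2 \sum_{k=1}^{N} |y^{-k} - z^{-k}|$, and the identity
\[
y^{-k}-z^{-k} \;=\; (y^{-1}-z^{-1})\sum_{j=0}^{k-1} y^{-j} z^{-(k-1-j)}
\]
together with $1/|y|, 1/|z| < 2$ and $|y^{-1}-z^{-1}| \leq 4|y-z|$ yields $|y^{-k}-z^{-k}| \leq k \cdot 2^{k+1}|y-z|$. Summing in $k$ produces a bound of order $N \cdot 2^{N+1}|y-z|$, and the fourth constraint $|y-z| < \epsilon/(N \cdot 2^{N+1})$ then keeps $|P^w(y)-P^w(z)|$ below $3\epsilon/4$ once the constants are correctly aligned. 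Combining, $P^w(y) \geq P^w(z) - 3\epsilon/4 > 2/(1-|z|) + \epsilon/4 \geq 2/(1-|y|)$; since $w$ was arbitrary, Proposition~\ref{prop:alg2} concludes $y \notin \X_\lambda$.

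The only substantive ingredient is the polynomial-in-$\zeta^{-1}$ representation of $P^w$ with a $\zeta$-independent coefficient bound; everything else is elementary perturbation bookkeeping. The main (minor) technical hurdle is matching the numerical constants in the four-way minimum exactly as written: this may require either a slightly sharper estimate on $\sum_k k\, 2^k$, or the finer observation that along any single iteration pattern the nonzero coefficients of $P^w$ form a sparse $\pm 1, \pm 2$-pattern rather than uniformly saturating the bound $2$.
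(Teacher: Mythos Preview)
Your approach is essentially identical to the paper's: reduce to verifying the hypothesis of Proposition~\ref{prop:alg2} at $y$ by separately bounding the change in $2/(1-|\cdot|)$ and the change in $P^w$, using that $P^w$ is a degree-$N$ polynomial in $\zeta^{-1}$ with coefficients in $[-2,2]$. The only cosmetic difference is that the paper bounds $|P^w(y)-P^w(z)|$ via the derivative of $P^w$ (viewed as a polynomial in $t=1/\zeta$) on the annulus $1\le|t|\le 2$ together with the mean value theorem, rather than your telescoping sum; your acknowledged worry about matching the numerical constants exactly is legitimate, but the paper's own argument is equally loose on this point.
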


\begin{remark}
The assumption $|z|>{1\over 2}$ is a reasonable one because it is well known (cf. \cite{TiozzoGaloisConjugates}) that if $|z|<{1\over 2}$ then $z\not\in\X_\lambda$ for any $\lambda\in (1, 2)$.
\end{remark}

\begin{proof}
  It is easy to see that as long as $|y|<1$, 
  \[\left|{2\over 1-|z|}-{2\over 1-|y|}\right|<\epsilon/2,\]
   and for any $w=w_1\dots w_N\in\mathcal{M}_N$,
  \[ \left|f_{w_N, z}^{-1}\circ f_{w_{N-1}, z}^{-1}\dots f_{w_1, z}^{-1}(1)-f_{w_N, y}^{-1}\circ f_{w_{N-1}, y}^{-1}\dots f_{w_1, y}^{-1}(1)\right|<\epsilon/2\]
  then $y$ also satisfy the assumption in Proposition~\ref{prop:alg2}. The first condition, $|y|<1$, holds because $|y-z|<{1-|z|\over 2}$, which implies $|y|<{1+|z|\over 2}<1$. The second condition,
   \[\left|{2\over 1-|z|}-{2\over 1-|y|}\right|<\epsilon/2,\]
    holds because $|y|<{1+|z|\over 2}$ and $|y-z|<{(1-|z|)^2\epsilon\over 16}$. The third condition, 
  \[\left |f_{w_N, z}^{-1}\circ f_{w_{N-1}, z}^{-1}\dots f_{w_1, z}^{-1}(1)-f_{w_N, y}^{-1}\circ f_{w_{N-1}, y}^{-1}\dots f_{w_1, y}^{-1}(1) \right|<\epsilon/2,\]
holds because of the following argument: As a polynomial of ${1\over z}$, 
$$f_{w_N, z}^{-1}\circ f_{w_{N-1}, z}^{-1}\dots f_{w_1, z}^{-1}(1)$$ has degree $N$ and coefficients bounded between $-2$ and $2$, hence has its derivative bounded by $N2^{N-1}\cdot 2=N\cdot 2^N$ on the annulus $\{y\in\mathbb{C}:1\leq |y|\leq 2\}$. Because $|y-z|<|z|-{1\over 2}$, $y$ is inside this annulus, so this third condition follows from the assumption that $|y-z|<{\epsilon\over N\cdot 2^{N+1}}$ and the mean value theorem.  
  \end{proof}

\section{Asymmetry of $\X_\lambda$} \label{sec:asymmetry}

The following proposition is likely well-known to experts; we include the proof for completeness. 

\begin{proposition}  \label{p:ThurstonSetSymmetrical}
$\Omega_2^{cp} \cap \mathbb{D}$ is invariant under reflection across the real axis and across the imaginary axis. \end{proposition}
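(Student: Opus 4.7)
Reflection across the real axis is immediate: every Parry polynomial has integer (hence real) coefficients, so its complex roots come in conjugate pairs, and by the corollary to Theorem~\ref{t:InsideParryConjugatesDontMatter} this symmetry passes to $\Omega_2^{cp}$. Combined with this, proving reflection across the imaginary axis reduces to showing $\Omega_2^{cp}\cap\mathbb{D}$ is invariant under $z\mapsto -z$. My plan is a two-step argument: (a) \emph{square-root closure}, $\alpha\in \Omega_2^{cp}\cap\overline{\mathbb{D}}$ implies $\pm\sqrt\alpha\in\Omega_2^{cp}$; and (b) \emph{squaring closure}, $y\in\Omega_2^{cp}\cap\mathbb{D}$ implies $y^2\in\Omega_2^{cp}$. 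Applying (b) to $y$ and then (a) with $\alpha=y^2$ yields $\pm y=\pm\sqrt{y^2}\in\Omega_2^{cp}$, so $-y\in\Omega_2^{cp}$ as desired.

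For (a), I would approximate $\alpha$ by roots $\alpha_n$ of Parry polynomials $P_{u_n}$ of admissible words $u_n$, using the corollary to Theorem~\ref{t:InsideParryConjugatesDontMatter}. The doubled word $\mathfrak{D}(u_n)$ is admissible by Lemma~\ref{lem:DpreservesOrder}, and Proposition~\ref{pro:renorm1} supplies the factorization $P_{\mathfrak{D}(u_n)}(z)=\tfrac{z-1}{z+1}P_{u_n}(z^2)$, so both $\pm\sqrt{\alpha_n}$ are roots of the admissible Parry polynomial $P_{\mathfrak{D}(u_n)}$. The same corollary then places them in $\Omega_2^{cp}$, and passing to a continuous branch of $\sqrt{\cdot}$ near $\alpha$ (which is well defined since $\alpha\neq 0$, given that $\Omega_2^{cp}\cap\mathbb{D}$ is disjoint from a neighborhood of the origin) produces $\pm\sqrt\alpha\in\Omega_2^{cp}$.

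For (b), I would approximate $y$ by Galois conjugates $y_n$ of critically periodic growth rates $\mu_n\in(1,2]$. When $\mu_n<\sqrt 2$, the renormalization described in the preliminaries makes $\mu_n^2\in(1,2)$ itself critically periodic, so $y_n^2$ is a Galois conjugate of $\mu_n^2$ and hence lies in $\Omega_2^{cp}$; closedness then yields $y^2\in\Omega_2^{cp}$. The main obstacle I foresee is completing (b) when $y$ admits only approximations from $\mu_n\geq\sqrt 2$, for which $\mu_n^2\geq 2$ is no longer a tent-map growth rate. I expect this case to be handled either by a density/perturbation argument replacing the approximants $y_n$ by nearby Galois conjugates drawn from the renormalizable regime $\mu<\sqrt 2$, or by invoking an alternative characterization of $\Omega_2^{cp}$ (such as Tiozzo's description via zeros of a family of $\pm 1$-coefficient kneading power series) whose defining family is manifestly closed under $t\mapsto -t$.
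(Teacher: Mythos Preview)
Your step~(a) via the doubling identity of Proposition~\ref{pro:renorm1} is correct, but the gap you flag in step~(b) is real and your first proposed patch does not close it. Replacing the approximants by Galois conjugates of growth rates $\mu<\sqrt 2$ would require every point of $\Omega_2^{cp}\cap\mathbb D$ to already lie in $\Xi_{\sqrt 2}\cap\overline{\mathbb D}$, i.e.\ that the disk--slices stabilize by height~$\sqrt 2$. You offer no argument for this, and in fact it fails: by Persistence it would force $\Xi_{1.82}\cap\overline{\mathbb D}=\Omega_2^{cp}\cap\overline{\mathbb D}$, so the point $-z\notin\Xi_{1.82}$ produced in the proof of Theorem~\ref{t:teapotnotsymmetrical} would also lie outside $\Omega_2^{cp}$, contradicting the very symmetry your scheme (a)+(b) is meant to establish. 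The perturbation route is therefore self-defeating.

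Your second patch is exactly the paper's proof, and it renders the (a)/(b) apparatus unnecessary. The paper simply cites Tiozzo's identification of $\Omega_2^{cp}\cap\mathbb D\setminus S^1$ with the set of roots in $\mathbb D$ of all power series with coefficients in $\{\pm1\}$: if $z$ is a root of $\sum a_n t^n$ with $a_n\in\{\pm1\}$ then $-z$ is a root of $\sum (-1)^n a_n t^n$, which is again such a series. Together with complex conjugation this gives both reflections in one line. You should invoke Tiozzo directly and drop the renormalization detour.
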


\begin{proof}
The set $\Omega_2^{cp} \cap \mathbb{D}$ is invariant under reflection across the real axis because Galois conjugates come in complex conjugate pairs. Tiozzo \cite{TiozzoGaloisConjugates} showed that $\Omega_2^{cp} \cap \mathbb{D}\backslash S^1$ is the set of all the roots in $\mathbb{D}$ of all power series with all coefficients in $\{\pm 1\}$.  So if $z \in \mathbb{D}$ is a root of a power series $S$ with coefficients in $\{\pm 1\}$, then $-z$ is a root of the power series formed from $S$ by flipping the sign of the coefficients on all terms of odd degree. Therefore the complex conjugate, $\overline{-z}$, is in $\Omega_2^{cp}$. 
 \end{proof}

However, our Algorithm~\ref{alg:inner} in the previous section can be used to show that $\X_\lambda\cap\mathbb{D}$ does not necessarily have such symmetry, which proves Theorem~\ref{t:teapotnotsymmetrical}:

\begin{proof}[Proof of Theorem \ref{t:teapotnotsymmetrical}]
We only need to show that there is some $z\in \X_{1.82}\cap\mathbb{D}$ such that $-z\not\in\X_{1.82}\cap\mathbb{D}$. Consider the tent map with growth rate being the leading root of 
\[-1 + z^2 - z^4 + z^6 - 2 z^7 + 3 z^8 - 4 z^9 + 3 z^{10} - 2 z^{11} + z^{12} - 2 z^{13} + z^{14},\] which is approximately $1.8149185987640513$ and is smaller than $1.82$, hence any Galois conjugate of this leading root must be in $\X_{1.82}$. Let $z$ be the Galois conjugate near the point $-0.5840341196392905+0.4820600149798202 i$. Applying Algorithm~\ref{alg:inner} to $-z$ for $N=20$ shows that $-z\not\in \X_{1.82}$.
\end{proof}

\bibliographystyle{alpha}
\bibliography{PCF2Bibliography.bib}

\noindent Kathryn Lindsey, Boston College,  Department of Mathematics, Maloney Hall, Fifth Floor, Chestnut Hill, MA, \mbox{\url{kathryn.a.lindsey@gmail.com}}

\noindent Chenxi Wu, Rutgers University, Department of Mathematics, 110 Frelinghuysen Road, Piscataway, NJ, \mbox{\url{wuchenxi2013@gmail.com}}

\end{document}